\documentclass[oneside, 11pt]{amsart} 

\usepackage{amsfonts}
\usepackage{amssymb}
\usepackage{latexsym}
\usepackage{graphics}
\usepackage[2emode]{psfrag}
\usepackage{amsthm}
\usepackage{amsmath}
\usepackage[all]{xy}
             
\addtolength{\hoffset}{-1.5cm}
\addtolength{\textwidth}{3cm}
\addtolength{\voffset}{-0.7cm}
\addtolength{\textheight}{3cm}

\begin{document}  

\newcommand{\norm}[1]{\| #1 \|}
\def\N{\mathbb N}
\def\Z{\mathbb Z}
\def\Q{\mathbb Q}
\def\mod{\textit{\emph{~mod~}}}
\def\R{\mathcal R}
\def\S{\mathcal S}
\def\*  C{{*  \mathcal C}} 
\def\C{\mathcal C}
\def\D{\mathcal D}
\def\J{\mathcal J}
\def\M{\mathcal M}
\def\T{\mathcal T}          

\newcommand{\Hom}{{\rm Hom}}
\newcommand{\End}{{\rm End}}
\newcommand{\Ext}{{\rm Ext}}
\newcommand{\Mor}{{\rm Mor}\,}
\newcommand{\Aut}{{\rm Aut}\,}
\newcommand{\Hopf}{{\rm Hopf}\,}
\newcommand{\Ann}{{\rm Ann}\,}
\newcommand{\Coker}{{\rm Coker}\,}
\newcommand{\im}{{\rm Im}\,}
\newcommand{\coim}{{\rm Coim}\,}
\newcommand{\Trace}{{\rm Trace}\,}
\newcommand{\Char}{{\rm Char}\,}
\newcommand{\Mod}{{\rm mod}}
\newcommand{\Spec}{{\rm Spec}\,}
\newcommand{\sgn}{{\rm sgn}\,}
\newcommand{\Id}{{\rm Id}\,}
\newcommand{\Com}{{\rm Com}\,}
\newcommand{\codim}{{\rm codim}}
\newcommand{\Mat}{{\rm Mat}}
\newcommand{\can}{{\rm can}}
\newcommand{\sign}{{\rm sign}}
\newcommand{\kar}{{\rm kar}}
\newcommand{\rad}{{\rm rad}}
\newcommand{\Ker}{{\rm Ker}}
\newcommand{\Inv}{{\rm Inv}}
\newcommand{\Den}{{\rm Den}}
\newcommand{\Img}{{\rm Im}}
\newcommand{\ra}{\rightarrow}
\newcommand{\rs}{\rightsquigarrow}

\def\lan{\langle}
\def\ran{\rangle}
\def\ot{\otimes}

\def\id{{\small \textit{\emph{1}}}\!\!1}    
\def\To{{\multimap\!\to}}
\def\bigperp{{\LARGE\textrm{$\perp$}}} 
\newcommand{\QED}{\hspace{\stretch{1}}
\makebox[0mm][r]{$\Box$}\\}

\def\RR{{\mathbb R}}
\def\FF{{\mathbb F}}
\def\NN{{\mathbb N}}
\def\CC{{\mathbb C}}
\def\DD{{\mathbb D}}
\def\ZZ{{\mathbb Z}}
\def\QQ{{\mathbb Q}}
\def\HH{{\mathbb H}}
\def\units{{\mathbb G}_m}
\def\GG{{\mathbb G}}
\def\EE{{\mathbb E}}
\def\FF{{\mathbb F}}
\def\rightact{\hbox{$\leftharpoonup$}}
\def\leftact{\hbox{$\rightharpoonup$}}

\newcommand{\Aa}{\mathcal{A}}
\newcommand{\Bb}{\mathcal{B}}
\newcommand{\Cc}{\mathcal{C}}
\newcommand{\Dd}{\mathcal{D}}
\newcommand{\Ee}{\mathcal{E}}
\newcommand{\Ff}{\mathcal{F}}
\newcommand{\Hh}{\mathcal{H}}
\newcommand{\Ii}{\mathcal{I}}
\newcommand{\Mm}{\mathcal{M}}
\newcommand{\Pp}{\mathcal{P}}
\newcommand{\Rr}{\mathcal{R}}
\def\*  C{{}*  \hspace*  {-1pt}{\Cc}}

\def\text#1{{\rm {\rm #1}}}

\def\smashco{\mathrel>\joinrel\mathrel\triangleleft}
\def\cosmash{\mathrel\triangleright\joinrel\mathrel<}

\def\Nat{\dul{\rm Nat}}

\renewcommand{\subjclassname}{\textup AMS Mathematics Subject
     Classification (2000)}

\newtheorem{prop}{Proposition}[section] 
\newtheorem{lemma}[prop]{Lemma}
\newtheorem{cor}[prop]{Corollary}
\newtheorem{theo}[prop]{Theorem}

\theoremstyle{definition}
\newtheorem{Def}[prop]{Definition}
\newtheorem{ex}[prop]{Example}
\newtheorem{exs}[prop]{Examples}
\newtheorem{Not}[prop]{Notation}
\newtheorem{Ax}[prop]{Axiom}
\newtheorem{rems}[prop]{Remarks}
\newtheorem{rem}[prop]{Remark}

\def\smashco{\mathrel>\joinrel\mathrel\triangleleft}
\def\curlarrow{\mathrel\sim\joinrel\mathrel>}

\title{Involutive filters of pseudo-hoops}
\author{Lavinia Corina Ciungu}

\begin{abstract} 
In this this paper we introduce the notion of involutive filters of pseudo-hoops, and we emphasize their 
role in the probability theory on these structures. 
A characterization of involutive pseudo-hoops is given and their properties are investigated. 
We give characterizations of involutive filters of a bounded pseudo-hoop and we prove that in the case of 
bounded Wajsberg pseudo-hoops the notions of fantastic and involutive filters coincide. 
One of main results consists of proving that a normal filter $F$ of a bounded pseudo-hoop $A$ is involutive 
if and only if $A/F$ is an involutive pseudo-hoop. It is also proved that any Boolean filter of a bounded Wajsberg pseudo-hoop is involutive. The notions of state operators and state-morphism operators on pseudo-hoops are 
introduced and the relationship between these operators are investigated. 
For a bounded Wajsberg pseudo-hoop we prove that the kernel of any state operator is an involutive filter. \\

\textbf{Keywords:} {Pseudo-hoop, Wajsberg pseudo-hoop, Archimedean pseudo-hoop, involutive filter, 
fantastic filter, state operator, state-morphism} \\ 

\textbf{AMS Mathematics Subject Classification (2000):} 03G25, 06F05, 06F35
\end{abstract}

\maketitle

\section{Introduction}

Many information processing branches are based on the non-classical logics and deal with uncertainty information  (fuzziness, randomness, vagueness, etc.). There is a strong motivation to revise the classical probability theory 
and to introduce more general probability models based on non-classical logics. 
Different probabilistic models have been constructed on algebras of fuzzy logics: states, generalized
states, internal states, state-morphism operators, measures.
Filters on non-commutative multiple-valued algebras proved to play an important role for studying the existence  
of probabilistic models on these structures (\cite{DvRa1}, \cite{DvRa2}, \cite{Dvu3}, \cite{Rac1}, \cite{Geo1}, \cite{DiDv1}, \cite{DiDv1-e}, \cite{Ciu6}) and to investigate their main properties (\cite{DiDv2}, \cite{DiDv3}, \cite{Dvu5}, \cite{DvRa3}, \cite{Dvu103}). 
Pseudo-hoops were introduced in \cite{Geo16} as a generalization of hoops which were originally defined and studied   by Bosbach in \cite{Bos1} and \cite{Bos2} under the name of complementary semigroups. 
It was proved that a pseudo-hoop has the pseudo-divisibility condition and it is a meet-semilattice, so a bounded R$\ell$-monoid can be viewed as a bounded pseudo-hoop together with the join-semilattice property. 
In other words, a pseudo-hoop is a meet-semilattice ordered residuated, integral and divisible monoid. 
The pseudo-hoops have been intensively studied in (\cite{Dvu4}, \cite{DvGiKo}, \cite{DvKu}, \cite{Ciu2}, \cite{Alav1}).
In the last years many works were dedicated to the study of probabilities theories on hoops and pseudo-hoops 
(\cite{Bor2}, \cite{Ciu10}, \cite{Ciu20}, \cite{Ciu22}, \cite{Ciu23}). \\

In this paper we show that the commutative property plays an important role in probabilities theory on 
pseudo-hoops. Important results on probabilistic models on algebras of non-classical logic
have been proved based on involutive filters. 
We give a characterization of bounded Wajsberg pseudo-hoops and we recall some properties 
of these structures. We define the notion of involutive pseudo-hoop, we give a characterization of involutive pseudo-hoops and we investigate their properties. 
We introduce the notion of a normal pseudo-hoop and we prove that the set of all involutive elements of a 
normal pseudo-hoop $A$ is a subalgebra of $A$. 
We define the notion of an Archimedean pseudo-hoop and we prove that a pseudo-hoop is Archimedean if and only 
if it is a linearly ordered Wajsberg pseudo-hoop. 
As a consequence, any simple pseudo-hoop is a linearly ordered Wajsberg pseudo-hoop.
The notion of an involutive filter of a bounded pseudo-hoop 
is defined and it is proved that the kernel of a Bosbach state (state-measure, type II state operator) on 
pseudo-hoops is an involutive filter.
If $\Den(A)$ is the set of all dense elements of a good pseudo-hoop $A$, we show that $\Den(A)$ is an 
involutive filter of $A$, and any filter of $A$ containing $\Den(A)$ is an involutive filter. 
One of main results consists of proving that a normal filter $F$ of a bounded pseudo-hoop $A$ is involutive if and 
only if $A/F$ is an involutive pseudo-hoop. 
We introduce the notion of a fantastic filter of a pseudo-hoop $A$ and we prove that a normal filter of $A$ 
is fantastic if and only if $A/F$ is a Wajsberg pseudo-hoop. 
We give characterizations of involutive filters of a bounded pseudo-hoop and we prove that in the case of 
bounded Wajsberg pseudo-hoops the notions of fantastic and involutive filters coincide. 
It is also proved that any Boolean filter of a bounded Wajsberg pseudo-hoop is involutive. 
The concept of a state pseudo-hoop have been developed in two directions: \\
\indent 
$-$ By generalization of state operators from bounded R$\ell$-monoids (\cite{DvRa3}, \cite{DvRa4}) to the case of 
bounded pseudo-hoops (\cite{Ciu22}, \cite{Ciu23}). \\
\indent
$-$ By defining the notion of state operators on hoops (\cite{Bor2}) as a particular case of state operators on 
BCK-algebras (\cite{Bor1}). \\
We unify the two concepts of state operators and we introduce a more general notion of state operators on pseudo-hoops. 
More precisely we define three types of state operators on pseudo-hoops: type I and type II as generalization of 
state operators on hoops from \cite{Bor2}, and type III as generalization of state operators on bounded pseudo-hoops from \cite{Ciu22}, \cite{Ciu23}. 
We prove that a pseudo-hoop is Wajsberg if and only if the type I and type II state operators coincide. 
For the case of a bounded pseudo-hoop it is proved that the kernel of a type II state operator is an involutive filter. Moreover, for a bounded Wajsberg pseudo-hoop the kernel of any type of state operator is an involutive filter. 
As main results we show that any type II state operator is a type III state operator, and in the case of bounded 
Wajsberg pseudo-hoops any type I state operator is a type III state operator. 
If the kernel of a type II state operator $\mu$ is a normal filter, then it is proved that $\mu$ is also a type I 
state operator. \\
We define the notion of a state-morphism operator on pseudo-hoops and we prove that any state-morphism operator is 
a type I and type III state operator. For the case of an idempotent pseudo-hoop it is proved that any type II or 
type III state operator is a state-morphism operator, while for a bounded idempotent Wajsberg pseudo-hoop any type I 
state operator is also a state-morphism. 
Another main result consists of proving that any state-morphism on the subalgebra of involutive elements of a 
bounded idempotent pseudo-hoop $A$ can be extended to a state-morphism on $A$.

$\vspace*{5mm}$

\section{Basic definitions and results}

Pseudo-hoops were introduced in \cite{Geo16} as a generalization of hoops which were originally defined and studied   by Bosbach in \cite{Bos1} and \cite{Bos2} under the name of complementary semigroups. 
It was proved that a pseudo-hoop has the pseudo-divisibility condition and it is a meet-semilattice, so a bounded R$\ell$-monoid can be viewed as a bounded pseudo-hoop together with the join-semilattice property. In other words, a pseudo-hoop is a meet-semilattice ordered residuated, integral and divisible monoid. 
In what follows we recall some basic notions and results regarding the pseudo-hoops. 
We prove new properties of pseudo-hoops and we give a characterization of simple pseudo-hoops. 

\begin{Def} \label{psh-10} $\rm($\cite{Geo16}$\rm)$ A \emph{pseudo-hoop} is an algebra $(A, \odot, \ra, \rs, 1)$ of 
the type $(2,2,2,0)$ such that, for all $x,y,z \in A$:\\
$(A_1)$ $x\odot 1=1\odot x=x;$\\
$(A_2)$ $x\ra x=x\rs x=1;$\\
$(A_3)$ $(x\odot y) \ra z=x\ra (y\ra z);$\\
$(A_4)$ $(x\odot y) \rs z=y\rs (x\rs z);$\\
$(A_5)$ $(x\ra y)\odot x=(y\ra x)\odot y=x\odot(x\rs y)=y\odot(y\rs x)$.
\end{Def}

In the sequel, we will agree that $\odot$ has higher priority than the operations $\ra$, $\rs$.\\
If the operation $\odot$ is commutative, or equivalently $\ra \:=\: \rs$, then the pseudo-hoop 
is said to be \emph{hoop}. Properties of hoops were studied in \cite{Bos1}, \cite{Bos2} and \cite{Blok1}. \\
On the pseudo-hoop $A$ we define $x\leq y$ iff $x\ra y=1$ (equivalent to $x\rs y=1$) and $\leq$ is a partial 
order on $A$. 
For any $n\in{\mathbb N}$, we define inductively: \\
$\hspace*{2cm}$ $x^0=1$, $x^{n+1}=x^n \odot x=x \odot x^n$, \\
$\hspace*{2cm}$ $x\ra^0y =y$, $x\ra^n y = x\ra (x\ra^{n-1} y)$, $n\ge 1$, \\ 
$\hspace*{2cm}$ $x\rs^0y =y$, $x\rs^n y = x\rs (x\rs^{n-1} y)$, $n\ge 1$. \\
If $A$ is a pseudo-hoop we denote: \\
$\hspace*{2cm}$ $\Id(A)=\{x\in A \mid x^2=x\}$, the set of all \emph{idempotent} elements of $A$. \\  
If $\Id(A)=A$, then $A$ is said to be \emph{idempotent}. \\ 
A pseudo-hoop $A$ is \emph{bounded} if there is an element $0\in A$ such that $0\leq x$ for all $x\in A$.\\
In the sequel we will also refer to the pseudo-hoop $(A, \odot, \ra, \rs,1)$ by its universe $A$. \\
Let $(A, \odot, \ra, \rs,0,1)$ be a bounded pseudo-hoop. We define two negations $^-$ and $^{\sim}$: 
for all $x \in A$, $\:\: x^-=x \ra 0$, $\:\:$ $x^{\sim}=x \rs 0$. 
If $A$ is a bounded pseudo-hoop we denote: \\ 
$\hspace*{2cm}$ $\Inv(A)=\{x\in A \mid x^{-\sim}=x^{\sim-}= x\}$, the set of all \emph{involutive} elements of $A$, \\
$\hspace*{2cm}$ $\Den(A)=\{x\in A \mid x^{-\sim}=x^{\sim-}= 1\}$, the set of all \emph{dense} elements of $A$, \\ 
If a bounded pseudo-hoop $A$ satisfies $x^{-\sim}=x^{\sim-}$ for all $x\in A$, then $A$ is called a \emph{good} pseudo-hoop. \\
Pseudo BCK-algebras were introduced by G. Georgescu and A. Iorgulescu in \cite{Geo15} as algebras 
with "two differences", a left- and right-difference, instead of one $*$ and with a constant element $0$ as 
the least element. 

\begin{Def} \label{psh-10-10} $\rm($\cite{Geo15}$\rm)$ A \emph{pseudo-BCK algebra} (more precisely, 
\emph{reversed left-pseudo-BCK algebra}) is a structure ${\mathcal A}=(A,\leq,\ra,\rs,1)$ where $\leq$ is a binary relation on $A$, $\ra$ and $\rs$ are binary operations on $A$ and $1$ is an element of $A$ satisfying, 
for all $x,y,z \in A$, the  axioms:\\
$(bck_1)$ $x \ra y \leq (y \ra z) \rs (x \ra z)$ and $x \rs y \leq (y \rs z) \ra (x \rs z)$; \\ 
$(bck_2)$ $x \leq (x \ra y) \rs y$ and $x \leq (x \rs y) \ra y$;\\
$(bck_3)$ $x \leq x$;\\
$(bck_4)$ $x \leq 1$;\\
$(bck_5)$ if $x \leq y$ and $y \leq x$, then $x = y$;\\
$(bck_6)$ $x \leq y$ iff $x \ra y = 1$ iff $x \rs y = 1$. 
\end{Def}

A pseudo-BCK algebra with \emph{(pP) condition} (i.e. with \emph{pseudo-product} condition) or  a \emph{pseudo-BCK(pP) algebra} for short, is a pseudo-BCK algebra 
${\mathcal A}=(A,\leq,\rightarrow,\rightsquigarrow,1)$ satisfying (pP) condition:\\
(pP) there exists, for all 
   $x,y \in A$, $x\odot y=min\{z \mid x \leq y \rightarrow z\}=min\{z \mid y \leq x \rightsquigarrow z\}$. \\
For more details about the properties of a pseudo-BCK algebra we refer te reader to \cite{Geo15}, \cite{Ior3}, and 
\cite{Kuhr1}. 
Commutative pseudo BCK-algebras were originally defined by G. Georgescu and A. Iorgulescu in \cite{Geo15} 
under the name of \emph{semilattice-ordered pseudo BCK-algebras}, while properties of these structures were 
investigated by J. K{\"u}hr in \cite{Kuhr1}, \cite{Kuhr2}. \\
It was proved in \cite{Ciu8} that any pseudo-hoop is a pseudo-BCK algebra with pseudo-product. 
It follows that all the properties of a pseudo-BCK algebra with pseudo-product proved in \cite{Ior1} and 
\cite{Ior13} are also valid in a pseudo-hoop. 

\begin{prop} \label{psh-20} $\rm($\cite{Geo16}, \cite{Ciu2}$\rm)$ In every pseudo-hoop $(A, \odot, \ra, \rs,1)$ the following hold for all $x, y, z\in A:$ \\
$(1)$ $(A, \odot,1)$ is a monoid;\\ 
$(2)$ $(A, \leq)$ is a meet-semillatice with $x\wedge y=(x\rightarrow y)\odot x=x\odot(x\rightsquigarrow y);$\\
$(3)$ $x\odot y\leq z$ iff $x\leq y \rightarrow z$ iff $y\leq x\rightsquigarrow z;$\\
$(4)$ $x\odot y\leq x\wedge y$, $\: x\leq y \rightarrow x$ and $x\leq y \rightsquigarrow x;$\\
$(5)$ $x \rightarrow y \leq (y \rightarrow z) \rightsquigarrow (x \rightarrow z)$ and 
      $x \rightsquigarrow y \leq (y \rightsquigarrow z) \rightarrow (x \rightsquigarrow z);$ \\ 
$(6)$ $x\leq y$ implies $z\rightarrow x \leq z\rightarrow y$ and 
                        $z\rightsquigarrow x \leq z\rightsquigarrow y;$\\
$(7)$ $x\leq y$ implies $y\rightarrow z \leq x\rightarrow z$ and $y\rightsquigarrow z \leq x\rightsquigarrow z;$ \\
$(8)$ $x\rightarrow y\wedge z=(x\rightarrow y)\wedge (x\rightarrow z)$ and 
      $x\rightsquigarrow y\wedge z=(x\rightsquigarrow y)\wedge (x\rightsquigarrow z);$ \\
$(9)$ $y\leq x\rightarrow y\odot x$ and $y\leq x\rightsquigarrow x\odot y;$ \\
$(10)$ $x\ra (y\rs z)=y\rs (x\ra z);$ \\
$(11)$ $x\ra y\le (z\ra x)\ra (z\ra y)$ and $x\rs y\le (z\rs x)\rs (z\rs y);$ \\
$(12)$ $x\ra y\le (x\odot z)\ra (y\odot z)$ and $x\rs y\le (z\odot x)\rs (z\odot y);$ \\
$(13)$ $x\leq y$ implies $x\odot z\le y\odot z$ and $z\odot x\le z\odot y;$ 
$(14)$ $x \leq (x \ra y) \rs y$ and $x \leq (x \rs y) \ra y$. 
\end{prop}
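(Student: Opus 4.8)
The plan is to derive all fourteen items from the axioms $(A_1)$--$(A_5)$ and the fact, recorded above, that $x\le y$ may be tested either by $x\ra y=1$ or by $x\rs y=1$. The linchpin is item $(3)$, the residuation law, which is essentially just a restatement of $(A_3)$ and $(A_4)$: $x\odot y\le z$ means $(x\odot y)\ra z=1$, which by $(A_3)$ is $x\ra(y\ra z)=1$, i.e. $x\le y\ra z$; testing the same inequality through $\rs$ and using $(A_4)$ gives the third form $y\le x\rs z$. Before that I would record two cheap preliminaries, proved by short antisymmetry arguments from $(A_1)$, $(A_2)$, $(A_3)$, $(A_5)$: that $1$ is the largest element (so $x\ra y\le 1$ and $x\rs y\le 1$ always), and that $1\ra x=1\rs x=x$. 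These, together with $(A_5)$ itself, are all that is needed beyond $(3)$.

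With $(3)$ available, associativity in $(1)$ is immediate: for every $w$, both $(x\odot y)\odot z\le w$ and $x\odot(y\odot z)\le w$ unfold, via $(3)$ and $(A_3)$, to $x\le y\ra(z\ra w)$, so the two products lie below exactly the same elements and antisymmetry finishes it; the unit laws are $(A_1)$. Next come the monotonicity laws $(13)$, $(6)$, $(7)$, each using only $(3)$ and transitivity: from $y\odot z\le y\odot z$ one reads off $y\le z\ra(y\odot z)$, so $x\le y$ forces $x\odot z\le y\odot z$, and dually $z\odot x\le z\odot y$; from $(z\ra x)\odot z\le x$ one gets $(6)$ and $(7)$. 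These, together with the elementary inequalities $(x\ra y)\odot x\le x$ (which reduces to $x\ra y\le x\ra x=1$, i.e. the preliminary) and $(x\ra y)\odot x\le y$ (which is $(3)$ applied to $x\ra y\le x\ra y$), are the only tools needed for the remaining items.

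For $(2)$, $(x\ra y)\odot x$ is a lower bound of $\{x,y\}$ by the two inequalities just mentioned, and it is the greatest one: if $w\le x$ and $w\le y$, then $(A_5)$ gives $w=(w\ra x)\odot w=(x\ra w)\odot x$, and monotonicity $(6)$ and $(13)$ push this up to $(x\ra y)\odot x$; the expression $x\odot(x\rs y)$ and all the equalities of $(A_5)$ then identify the meet. Items $(4)$, $(9)$, $(14)$ are one-line applications of $(3)$ (for instance $y\le x\ra(y\odot x)$ is just $y\odot x\le y\odot x$), and $(5)$, $(8)$, $(11)$, $(12)$ all follow the same recipe: use $(3)$ once or twice to strip the outer $\ra$ or $\rs$, regroup by associativity $(1)$, and collapse using one of the elementary inequalities followed by $(13)$; e.g. $x\ra y\le(y\ra z)\rs(x\ra z)$ becomes, after two uses of $(3)$, the claim $(y\ra z)\odot(x\ra y)\odot x\le z$, which holds because $(x\ra y)\odot x\le y$ and $(y\ra z)\odot y\le z$. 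Finally $(10)$ is again a "same lower set" argument: by $(3)$, an element $a$ satisfies $a\le x\ra(y\rs z)$ iff $y\odot a\odot x\le z$, and also $a\le y\rs(x\ra z)$ iff $y\odot a\odot x\le z$, so the two sides coincide.

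I do not expect a genuine obstacle; the difficulty is purely organizational. The one thing to watch is the order of deduction, to avoid circularity: the preliminaries about $1$ must precede the elementary inequalities, and the monotonicity laws $(6)$, $(13)$ must precede the meet description $(2)$, which invokes them; also $x\wedge y$ must be established as an honest greatest lower bound before its universal property is used. A second, purely clerical, point is that every "$\ra$-sided" statement has a mirror "$\rs$-sided" statement obtained by replacing $(A_3)$ with $(A_4)$ and the left adjunction with the right one, and each mirror must be checked, though the arguments are verbatim translations. (All of this is contained in \cite{Geo16} and \cite{Ciu2}, and via \cite{Ciu8} in the pseudo-BCK results of \cite{Ior1} and \cite{Ior13}.)
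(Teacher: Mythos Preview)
Your proposal is correct and well organized. The paper, however, does not prove this proposition at all: it is stated with citations to \cite{Geo16} and \cite{Ciu2} and used as a toolbox of known facts, so there is no in-paper argument to compare against. Your derivation---establishing the residuation law $(3)$ directly from $(A_3)$ and $(A_4)$, then bootstrapping monotonicity $(6)$, $(7)$, $(13)$ and the meet description $(2)$ from $(A_5)$, and reducing the remaining items to one or two applications of $(3)$ plus associativity---is exactly the standard route taken in those references (and, via the observation that every pseudo-hoop is a pseudo-BCK(pP) algebra, in \cite{Ior1} and \cite{Ior13} as well). The only point worth flagging is that the two ``cheap preliminaries'' ($x\le 1$ and $1\ra x=1\rs x=x$) and the equivalence $x\ra y=1\Leftrightarrow x\rs y=1$ do require a short argument using $(A_5)$ with one variable set to $1$; you acknowledge this, and the details are in \cite{Geo16}.
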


\begin{prop} \label{psh-20-05} $\rm($\cite[Prop. 3.1]{DvKo}$\rm)$ Let $(A, \odot, \ra, \rs,1)$ be a pseudo-hoop and 
$a\in \mbox{Id}(A)$. Then the following hold for all $x\in A$: \\
$(1)$ $a\odot x=a\wedge x=x\odot a;$ \\
$(2)$ $a\ra x=a\rs x$. 
\end{prop}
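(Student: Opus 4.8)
The plan is to derive part $(2)$ from part $(1)$ via residuation, so the real content lies in part $(1)$. For part $(1)$, the step I would isolate first is the auxiliary identity
\[
a\odot(a\wedge x)=a\wedge x=(a\wedge x)\odot a\qquad\text{for all }x\in A.
\]
This comes out of Proposition~\ref{psh-20}(2), which gives $a\wedge x=(a\ra x)\odot a=a\odot(a\rs x)$, combined with associativity (Proposition~\ref{psh-20}(1)) and the idempotency $a\odot a=a$: indeed $a\odot(a\wedge x)=a\odot\bigl(a\odot(a\rs x)\bigr)=(a\odot a)\odot(a\rs x)=a\odot(a\rs x)=a\wedge x$, and symmetrically $(a\wedge x)\odot a=\bigl((a\ra x)\odot a\bigr)\odot a=(a\ra x)\odot(a\odot a)=(a\ra x)\odot a=a\wedge x$.

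Granting this, part $(1)$ is immediate. On one side, $a\odot x\le a\wedge x$ and $x\odot a\le x\wedge a=a\wedge x$ by Proposition~\ref{psh-20}(4). On the other side, from $a\wedge x\le x$ and the monotonicity of $\odot$ (Proposition~\ref{psh-20}(13)) together with the auxiliary identity we get $a\wedge x=a\odot(a\wedge x)\le a\odot x$ and $a\wedge x=(a\wedge x)\odot a\le x\odot a$. Combining the two sides yields $a\odot x=a\wedge x=x\odot a$.

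For part $(2)$ I would invoke the residuation equivalences of Proposition~\ref{psh-20}(3): for all $y,z\in A$ one has $a\odot y\le z$ iff $y\le a\rs z$, and $y\odot a\le z$ iff $y\le a\ra z$. By part $(1)$ the hypotheses $a\odot y\le z$ and $y\odot a\le z$ are the same, so $y\le a\rs z$ iff $y\le a\ra z$ for every $y$. Specialising $y:=a\rs z$ gives $a\rs z\le a\ra z$, and $y:=a\ra z$ gives $a\ra z\le a\rs z$, whence $a\ra z=a\rs z$ by antisymmetry of $\le$.

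The only step that is not completely mechanical is recognising the auxiliary identity $a\odot(a\wedge x)=a\wedge x$ (equivalently, that $a\odot-$ acts as identity on the downset of $a$); once it is in hand, the proposition reduces to a short chain of monotonicity and adjunction arguments, each a direct application of Proposition~\ref{psh-20}, so I do not anticipate any genuine obstacle.
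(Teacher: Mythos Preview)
Your argument is correct in every detail: the auxiliary identity $a\odot(a\wedge x)=a\wedge x=(a\wedge x)\odot a$ follows exactly as you say from Proposition~\ref{psh-20}(2), associativity, and $a\odot a=a$; the two-sided squeeze via Proposition~\ref{psh-20}(4),(13) then gives part~$(1)$; and the residuation argument from Proposition~\ref{psh-20}(3) yields part~$(2)$ cleanly.

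Note, however, that the paper does not supply its own proof of this proposition: it is simply quoted from \cite[Prop.~3.1]{DvKo}, so there is nothing in the present paper to compare your approach against. What you have written is the standard proof of this fact (and essentially the one given in the cited source), so you may regard it as matching the intended argument.
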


\begin{prop} \label{psh-20-05-10} In any pseudo-hoop $(A, \odot, \ra, \rs,1)$ the following hold 
for all $x, y\in A:$  \\   
$(1)$ $x\odot y=x\odot (x\rs x\odot y)=(y\ra x\odot y)\odot y;$ \\
$(2)$ $(x\ra y)\rs (y\ra x)=y\ra x$ and $(x\rs y)\ra (y\rs x)=y\rs x$. 
\end{prop}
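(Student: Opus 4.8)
The plan is to reduce both parts to the two basic meet identities of Proposition~\ref{psh-20}, namely $u\wedge v=(u\ra v)\odot u=u\odot(u\rs v)$, together with the monotonicity laws and the residuation equivalences recorded there.

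Part~(1) is short. Unwinding the precedence convention, the first claimed equality is $x\odot(x\rs(x\odot y))=x\odot y$; by Proposition~\ref{psh-20}(2) the left side equals $x\wedge(x\odot y)$, and since $x\odot y\le x\wedge y\le x$ by Proposition~\ref{psh-20}(4) this is just $x\odot y$. Symmetrically, $(y\ra(x\odot y))\odot y=y\wedge(x\odot y)=x\odot y$, using $x\odot y\le y$. So I would simply record these two lines.

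For part~(2), consider the first identity $(x\ra y)\rs(y\ra x)=y\ra x$. The key opening move is Proposition~\ref{psh-20}(10), which gives $(x\ra y)\rs(y\ra x)=y\ra\big((x\ra y)\rs x\big)$. Writing $s:=(x\ra y)\rs x$, Proposition~\ref{psh-20}(4) yields $x\le s$, hence $y\ra x\le y\ra s$ by Proposition~\ref{psh-20}(6); that is one inequality. For the reverse inequality $y\ra s\le y\ra x$, Proposition~\ref{psh-20}(3) reduces it to $(y\ra s)\odot y\le x$, and $(y\ra s)\odot y=y\wedge s$ by Proposition~\ref{psh-20}(2). So everything comes down to the single claim $y\wedge\big((x\ra y)\rs x\big)\le x$. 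To prove it, use the meet formula once more: $y\wedge s=s\wedge y=(s\ra y)\odot s$ by Proposition~\ref{psh-20}(2); since $x\le s$, Proposition~\ref{psh-20}(7) gives $s\ra y\le x\ra y$, so by Proposition~\ref{psh-20}(13), $(s\ra y)\odot s\le (x\ra y)\odot s=(x\ra y)\odot\big((x\ra y)\rs x\big)=(x\ra y)\wedge x\le x$, the middle equality being again Proposition~\ref{psh-20}(2). This settles the first identity; the second identity $(x\rs y)\ra(y\rs x)=y\rs x$ follows by the mirror-image argument with $\ra$ and $\rs$ interchanged (each auxiliary fact from Proposition~\ref{psh-20} used above is stated in both versions), starting from $(x\rs y)\ra(y\rs x)=y\rs\big((x\rs y)\ra x\big)$ via Proposition~\ref{psh-20}(10) and reducing to $y\wedge\big((x\rs y)\ra x\big)\le x$.

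The only genuine obstacle is spotting the opening rewrite via Proposition~\ref{psh-20}(10): once the outer residual is pushed inside so that the target becomes a bound on the meet $y\wedge\big((x\ra y)\rs x\big)$, the rest is a routine chain of the monotonicity laws and the formula $u\wedge v=(u\ra v)\odot u=u\odot(u\rs v)$. The one mild subtlety in that chain is that the single fact $x\le(x\ra y)\rs x$ gets used in two opposite ways — through Proposition~\ref{psh-20}(6) to obtain $y\ra x\le y\ra s$, and through Proposition~\ref{psh-20}(7) to obtain $s\ra y\le x\ra y$.
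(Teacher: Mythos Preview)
Your proof is correct and follows essentially the same route as the paper: both parts~(1) are identical, and for part~(2) both you and the paper open with Proposition~\ref{psh-20}(10) to reduce to showing $y\ra s=y\ra x$ for $s=(x\ra y)\rs x$, obtain one inequality from $x\le s$, and reduce the other to $(s\ra y)\odot s\le x$. The only minor variation is in this last bound: the paper uses $s\ra y\le x\ra y\le s\ra x$ via Proposition~\ref{psh-20}(14) and then $(A_5)$, whereas you stop at $s\ra y\le x\ra y$ (Proposition~\ref{psh-20}(7)) and finish with $(x\ra y)\odot s=(x\ra y)\wedge x\le x$ --- a slight simplification that avoids invoking~(14).
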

\begin{proof}
Let $(A, \odot, \ra, \rs,1)$ be a pseudo-hoop and let $x, y\in A$. \\ 
$(1)$ According to Proposition \ref{psh-20}$(2)$ we have: \\
$\hspace*{2cm}$ $x\odot y=x\wedge (x\odot y)=(x\ra x\odot y)\odot x=x\odot (x\rs x\odot y)$, \\
$\hspace*{2cm}$ $x\odot y=y\wedge (x\odot y)=(y\ra x\odot y)\odot y=y\odot (y\rs x\odot y)$. \\
Hence $x\odot y=x\odot (x\rs x\odot y)=(y\ra x\odot y)\odot y$. \\
$(2)$ Denote $z=(x\ra y)\rs x$. Since $x\le z$, by Proposition \ref{psh-20}$(7)$,$(14)$ we get: \\ 
$\hspace*{2cm}$ $z\ra y\le x\ra y\le ((x\ra y)\rs x)\ra x=z\ra x$, \\ 
hence $(z\ra y)\odot z\le x$. 
By $(A_5)$, $(y\ra z)\odot y=(z\ra y)\odot z\le x$, thus $y\ra z\le y\ra x$. 
On the other hand, from $x\le z$, by Proposition \ref{psh-20}$(6)$ we have $y\ra x\le y\ra z$,  
that is $y\ra z=y\ra x$. 
Applying Proposition \ref{psh-20}$(10)$ we get: \\
$\hspace*{2cm}$ $(x\ra y)\rs (y\ra x)=y\ra ((x\ra y)\rs x)=y\ra z=y\ra x$. \\
Similarly $(x\rs y)\ra (y\rs x)=y\rs x$. 
\end{proof}

\begin{prop} \label{psh-20-10} $\rm($\cite{Geo16}, \cite{Ciu2}$\rm)$ In every bounded pseudo-hoop 
$(A, \odot, \ra, \rs,1)$ the following hold for all $x, y\in A$:\\       
$(1)$ $x \leq x^{-\sim}$ and $x \leq x^{\sim-};$\\
$(2)$ $x \rightarrow y^{\sim} = y \rightsquigarrow x^-$ and 
      $x \rightsquigarrow y^- = y \rightarrow x^{\sim};$\\
$(3)$ $x^{-\sim-}=x^-$ and $x^{\sim-\sim}=x^{\sim};$ \\
$(4)$ $x \rightarrow y^{-\sim}=y^- \rightsquigarrow x^- = x^{-\sim} \rightarrow y^{-\sim}$ and 
       $x \rightsquigarrow y^{\sim-}=y^{\sim} \rightarrow x^{\sim} = x^{\sim-} \rightsquigarrow y^{\sim-};$\\
$(5)$ $x\rightarrow y^{-}=(x\odot y)^{-}$ and $x\rightsquigarrow y^{\sim}=(y\odot x)^{\sim};$ \\
$(6)$ $x\ra y\le y^-\rs x^-$ and $x\rs y\le y^{\sim}\ra x^{\sim}$.                 
\end{prop}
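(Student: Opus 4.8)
The plan is to obtain every item by specialising, at $z=0$, an identity or inequality already available in Proposition \ref{psh-20} together with the defining axioms $(A_1)$--$(A_5)$, and then performing the (purely formal) bookkeeping of the two negations $x^-=x\ra 0$ and $x^{\sim}=x\rs 0$.

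First, $(1)$ is immediate from Proposition \ref{psh-20}$(14)$ with $y:=0$. For $(5)$ I would simply read off $(x\odot y)\ra 0=x\ra(y\ra 0)$ from $(A_3)$ and $(y\odot x)\rs 0=x\rs(y\rs 0)$ from $(A_4)$ (the latter after renaming $x,y$). For $(6)$ I would set $z:=0$ in Proposition \ref{psh-20}$(5)$. For $(2)$ I would use Proposition \ref{psh-20}$(10)$, i.e. $x\ra(y\rs z)=y\rs(x\ra z)$, at $z:=0$, together with its mirror image $x\rs(y\ra z)=y\ra(x\rs z)$ at $z:=0$; the mirror identity holds because exchanging $\ra\leftrightarrow\rs$ and simultaneously passing to the opposite monoid is a symmetry of $(A_1)$--$(A_5)$.

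The slightly more delicate items are $(3)$ and $(4)$. For $(3)$, applying $(1)$ with $x^-$ in place of $x$ gives $x^-\le x^{-\sim-}$, while applying $(1)$ to $x$ and then the antitonicity of $\ra$ in its first argument (Proposition \ref{psh-20}$(7)$) gives $x^{-\sim-}=x^{-\sim}\ra 0\le x\ra 0=x^-$; antisymmetry of $\le$ then yields $x^{-\sim-}=x^-$, and the $\sim$-version is the mirror statement. For $(4)$, I would first prove $x\ra y^{-\sim}=y^-\rs x^-$ by applying Proposition \ref{psh-20}$(10)$ at $z:=0$ with $y^-$ in place of $y$; substituting $x^{-\sim}$ for $x$ in this identity and invoking $x^{-\sim-}=x^-$ from $(3)$ gives $x^{-\sim}\ra y^{-\sim}=y^-\rs x^-$ as well, so all three expressions agree, and the $\rs$-statement is again the mirror image.

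I do not expect a genuine obstacle: the only thing to watch is the non-commutativity, namely keeping the left negation $^-$ paired with $\ra$ and the right negation $^{\sim}$ paired with $\rs$, and tracking which of $x,y$ sits on the left in each product or implication — this is precisely where a careless substitution in $(3)$ or $(4)$ would go astray.
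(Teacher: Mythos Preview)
Your argument is correct in every part; each of $(1)$--$(6)$ follows exactly as you outline, by specialising an identity from Proposition~\ref{psh-20} or the axioms $(A_3)$, $(A_4)$ at $z=0$ and then doing the bookkeeping you describe. One small simplification: for the second half of $(2)$ you do not actually need the mirror symmetry --- swapping the roles of $x$ and $y$ in Proposition~\ref{psh-20}$(10)$ already yields $y\ra(x\rs 0)=x\rs(y\ra 0)$, i.e.\ $y\ra x^{\sim}=x\rs y^{-}$.

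As for comparison: the paper does not give a proof of this proposition at all; it is stated with citations to \cite{Geo16} and \cite{Ciu2} and treated as background. Your self-contained derivation from Proposition~\ref{psh-20} and the axioms is therefore a genuine addition rather than a different route to the same place, and it is entirely adequate.
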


\begin{prop} \label{psh-30} $\rm($\cite[Prop. 2.4]{Ciu2}$\rm)$ If $(A, \odot, \ra, \rs,0,1)$ 
is a good pseudo-hoop, then the following hold for all $x, y\in A$: \\
$(1)$ $(x^{-\sim}\rightarrow x)^{\sim}=(x^{-\sim}\rightsquigarrow x)^-=0;$ \\
$(2)$ $(x \rightarrow y)^{-\sim}=x^{-\sim} \rightarrow y^{-\sim} \:$ and
      $\: (x \rightsquigarrow y)^{-\sim}=x^{-\sim} \rightsquigarrow y^{-\sim};$  \\
$(3)$ $(x\wedge y)^{-\sim}=x^{-\sim}\wedge y^{-\sim};$ \\ 
$(4)$ $x\ra y^{-}=x^{-\sim}\ra y^{-}$ and $x\rs y^{\sim}=x^{-\sim}\rs y^{\sim}$. 
\end{prop}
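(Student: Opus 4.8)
The plan is to prove all four identities by direct computation with the two negations, using the residuation laws of Propositions \ref{psh-20}, \ref{psh-20-10} and \ref{psh-20-05-10} together with the standing hypothesis $z^{-\sim}=z^{\sim-}$. First I would record four elementary facts about a good pseudo-hoop. $(a)$ Each of $z^-$, $z^\sim$, $z^{-\sim}$ is involutive: $(z^-)^{\sim-}=z^{-\sim-}=z^-$ by Proposition \ref{psh-20-10}$(3)$, hence $(z^-)^{-\sim}=(z^-)^{\sim-}=z^-$ by goodness, and similarly for $z^\sim$ and $z^{-\sim}$. $(b)$ The map $z\mapsto z^{-\sim}$ is order preserving (if $a\le b$ then $b^-\le a^-$, hence $a^{-\sim}\le b^{-\sim}$, both negations being antitone); since $a\le a^{-\sim}$ by Proposition \ref{psh-20-10}$(1)$ and $a^{-\sim}$ is involutive, $a^{-\sim}$ is the least involutive element above $a$, so $a^{-\sim}\le b$ for every involutive $b\ge a$. $(c)$ If $b$ is involutive then $y\ra b$ and $y\rs b$ are involutive: writing $b=(b^-)^\sim$, Proposition \ref{psh-20-10}$(2)$ gives $y\ra b=b^-\rs y^-$, and Proposition \ref{psh-20-10}$(5)$ rewrites this as $(y^{--}\odot b^-)^\sim$, hence involutive by $(a)$; dually for $y\rs b$. $(d)$ The meet of two involutive elements $u,v$ is involutive, since $u\wedge v\le(u\wedge v)^{-\sim}\le u^{-\sim}\wedge v^{-\sim}=u\wedge v$ by $(b)$.

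For $(4)$, Proposition \ref{psh-20-10}$(5)$ gives $x\ra y^-=(x\odot y)^-$ and $x^{-\sim}\ra y^-=(x^{-\sim}\odot y)^-$, so it suffices to show $(x\odot y)^-=(x^{-\sim}\odot y)^-$, and since $w^{-\sim-}=w^-$ by Proposition \ref{psh-20-10}$(3)$ it is enough to show $(x\odot y)^{-\sim}=(x^{-\sim}\odot y)^{-\sim}$. Here $\le$ is clear from $(b)$ as $x\odot y\le x^{-\sim}\odot y$. For $\ge$, set $b:=(x\odot y)^{-\sim}$: it is involutive with $b\ge x\odot y$, so $x\le y\ra b$ by Proposition \ref{psh-20}$(3)$, whence $x^{-\sim}\le(y\ra b)^{-\sim}=y\ra b$ by $(b)$ and $(c)$; multiplying on the right by $y$ and using Propositions \ref{psh-20}$(13)$ and \ref{psh-20}$(2)$ gives $x^{-\sim}\odot y\le(y\ra b)\odot y=y\wedge b\le b$, so $(x^{-\sim}\odot y)^{-\sim}\le b^{-\sim}=b$. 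The mirror argument on the second factor (using the $\rs$-part of $(c)$) then yields the product law $(x\odot y)^{-\sim}=(x^{-\sim}\odot y^{-\sim})^{-\sim}$, to be reused below; the second identity in $(4)$ is the left--right mirror image of the first.

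For $(2)$, Proposition \ref{psh-20-10}$(4)$ already gives $x^{-\sim}\ra y^{-\sim}=x\ra y^{-\sim}$, so $(2)$ amounts to $(x\ra y)^{-\sim}=x\ra y^{-\sim}$. The element $x\ra y^{-\sim}$ is involutive by $(c)$ (since $y^{-\sim}$ is involutive by $(a)$) and dominates $x\ra y$ by Proposition \ref{psh-20}$(6)$, so $(b)$ immediately gives $(x\ra y)^{-\sim}\le x\ra y^{-\sim}$. The reverse inequality $x\ra y^{-\sim}\le(x\ra y)^{-\sim}$ is the crux. Since both sides are involutive, by Proposition \ref{psh-20-10}$(3)$ it is equivalent to $(x\ra y)^-\le(x\ra y^{-\sim})^-$, i.e. by Proposition \ref{psh-20}$(3)$ to $(x\ra y)^-\odot(x\ra y^{-\sim})\le 0$; this residuation identity can be verified directly from $(A_3)$, $(A_4)$ and Proposition \ref{psh-20}$(5)$,$(11)$, or quoted from the theory of good pseudo-BCK$(pP)$-algebras in \cite{Ior1}, \cite{Ior13}, which applies to pseudo-hoops by \cite{Ciu8}. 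I expect this to be the only genuinely delicate step.

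Finally, $(3)$ and $(1)$ follow. For $(3)$, by Proposition \ref{psh-20}$(2)$, the product law, $(2)$, Proposition \ref{psh-20}$(2)$ again and $(d)$,
$(x\wedge y)^{-\sim}=((x\ra y)\odot x)^{-\sim}=((x\ra y)^{-\sim}\odot x^{-\sim})^{-\sim}=((x^{-\sim}\ra y^{-\sim})\odot x^{-\sim})^{-\sim}=(x^{-\sim}\wedge y^{-\sim})^{-\sim}=x^{-\sim}\wedge y^{-\sim}$,
and symmetrically for $\rs$. For $(1)$, $(2)$ gives $(x^{-\sim}\ra x)^{-\sim}=(x^{-\sim})^{-\sim}\ra x^{-\sim}=x^{-\sim}\ra x^{-\sim}=1$; applying $^-$ and Proposition \ref{psh-20-10}$(3)$ gives $(x^{-\sim}\ra x)^-=0$, and then goodness ($(x^{-\sim}\ra x)^{\sim-}=(x^{-\sim}\ra x)^{-\sim}=1$ forces $(x^{-\sim}\ra x)^\sim\le 0$) gives $(x^{-\sim}\ra x)^\sim=0$; likewise $(x^{-\sim}\rs x)^-=0$ from the $\rs$-form of $(2)$.
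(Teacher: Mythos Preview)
The paper gives no proof of Proposition~\ref{psh-30}; it is simply quoted from \cite[Prop.~2.4]{Ciu2}. So there is nothing in the paper to compare against, and your attempt has to stand on its own. Your preliminary facts $(a)$--$(d)$ are correct and well chosen, and your derivations of $(1)$ and $(3)$ from $(2)$ are clean. One minor point: item $(4)$ is a one-liner from $(a)$ and Proposition~\ref{psh-20-10}$(4)$ alone --- since $y^{-}$ is involutive, $y^{-}=(y^{-})^{-\sim}$, so $x\ra y^{-}=x\ra(y^{-})^{-\sim}=x^{-\sim}\ra(y^{-})^{-\sim}=x^{-\sim}\ra y^{-}$ --- and the detour through products is unnecessary (indeed, Lemma~\ref{psh-40} of the paper derives the product inequality \emph{from} $(4)$, not the other way round).

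The real issue is the step you yourself call the ``crux'' of $(2)$: the inequality $(x\ra y)^{-}\odot(x\ra y^{-\sim})\le 0$. You do not prove it; you assert that it ``can be verified directly from $(A_3)$, $(A_4)$ and Proposition~\ref{psh-20}$(5)$,$(11)$'' or else be quoted from \cite{Ior1},\cite{Ior13}. This is a genuine gap, and it sits exactly at the heart of the proposition --- everything else you do hangs on $(2)$. A workable route is to first establish $(y^{-\sim}\ra y)^{-}=0$ directly (in a good pseudo-hoop this is equivalent to the stated $(y^{-\sim}\ra y)^{\sim}=0$), and then use Proposition~\ref{psh-20}$(11)$ in the form $y^{-\sim}\ra y\le(x\ra y^{-\sim})\ra(x\ra y)$ together with Proposition~\ref{psh-20-10}$(5)$ to obtain $(x\ra y)^{-}\le(x\ra y^{-\sim})\ra(y^{-\sim}\ra y)^{-}=(x\ra y^{-\sim})^{-}$. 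But as written, the non-trivial half of $(2)$ is asserted rather than proved, so the argument is incomplete precisely where it matters most.
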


\begin{cor} \label{psh-30-10} Any good pseudo-hoop $(A, \odot, \ra, \rs,0,1)$ satisfies the following identities 
for all $x, y\in A$: \\
$\hspace*{3cm}$ $(x\ra y)^{-\sim}=x\ra y^{-\sim}$, \\
$\hspace*{3cm}$ $(x\rs y)^{-\sim}=x\rs y^{-\sim}$.
\end{cor}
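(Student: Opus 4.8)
The plan is to obtain both identities by chaining Proposition~\ref{psh-30}$(2)$ with Proposition~\ref{psh-20-10}$(4)$; the only subtlety is that the second ingredient applies the two negations in the opposite order, so the hypothesis that $A$ is good must be invoked to reconcile the two.

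First I would treat the $\ra$-identity. Since $A$ is good, Proposition~\ref{psh-30}$(2)$ gives $(x\ra y)^{-\sim}=x^{-\sim}\ra y^{-\sim}$ for all $x,y\in A$. On the other hand, the first clause of Proposition~\ref{psh-20-10}$(4)$, valid in any bounded pseudo-hoop, reads $x\ra y^{-\sim}=y^-\rs x^-=x^{-\sim}\ra y^{-\sim}$. Comparing the two identities yields $(x\ra y)^{-\sim}=x^{-\sim}\ra y^{-\sim}=x\ra y^{-\sim}$, which is the first asserted identity.

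For the $\rs$-identity I would argue symmetrically, using the $\rs$-halves of the same two propositions. Proposition~\ref{psh-30}$(2)$ gives $(x\rs y)^{-\sim}=x^{-\sim}\rs y^{-\sim}$, while the second clause of Proposition~\ref{psh-20-10}$(4)$ gives $x\rs y^{\sim-}=y^{\sim}\ra x^{\sim}=x^{\sim-}\rs y^{\sim-}$. This is the single point where goodness is genuinely used: since $A$ is good we may replace every occurrence of $z^{\sim-}$ by $z^{-\sim}$, so the last chain becomes $x\rs y^{-\sim}=x^{-\sim}\rs y^{-\sim}$; combining this with the output of Proposition~\ref{psh-30}$(2)$ gives $(x\rs y)^{-\sim}=x\rs y^{-\sim}$.

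I do not expect any real obstacle here: the corollary is essentially a bookkeeping consequence of the two quoted propositions. The only thing requiring care is keeping track of the order of the double negations $^{-\sim}$ versus $^{\sim-}$ and remembering that the defining property of a good pseudo-hoop is exactly what allows one to interchange them — which is why the statement is phrased for good pseudo-hoops, even though one half of the input, namely Proposition~\ref{psh-20-10}$(4)$, already holds in arbitrary bounded pseudo-hoops.
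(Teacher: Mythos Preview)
Your argument is correct and follows exactly the route the paper takes: the corollary is obtained by combining Proposition~\ref{psh-30}$(2)$ with Proposition~\ref{psh-20-10}$(4)$, invoking goodness to pass between $^{\sim-}$ and $^{-\sim}$ in the $\rs$-half. One small remark on your commentary: goodness is not used only at the swap of double negations in the $\rs$-case --- Proposition~\ref{psh-30}$(2)$ itself is stated for good pseudo-hoops, so the hypothesis is already needed in both halves.
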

\begin{proof} It follows from Propositions \ref{psh-30}$(2)$ and \ref{psh-20-10}$(4)$.
\end{proof}

\begin{rem} \label{psh-30-20} Due to Corollary \ref{psh-30-10}, we say that any good pseudo-hoop 
$(A, \odot, \ra, \rs,0,1)$ has the \emph{Glivenko property}.
\end{rem}

\begin{lemma} \label{psh-40} If $(A, \odot, \ra, \rs,0,1)$ is a good pseudo-hoop, then \\
$\hspace*{3cm}$ $(x\odot y)^{-\sim}\ge x^{-\sim}\odot y^{-\sim}$, \\ 
for all $x, y\in A$.
\end{lemma}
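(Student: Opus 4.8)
The plan is to translate the inequality into an identity by means of the standard description of the order in terms of $\odot$ and $0$. Concretely, for arbitrary $z,w\in A$ one has $z\le w^{-\sim}$ if and only if $w^-\odot z=0$: indeed $w^{-\sim}=(w\ra 0)\rs 0$, so by Proposition \ref{psh-20}$(3)$ the inequality $z\le (w\ra 0)\rs 0$ is equivalent to $(w\ra 0)\odot z\le 0$, i.e.\ to $w^-\odot z=0$ since $0$ is the least element. Taking $z=x^{-\sim}\odot y^{-\sim}$ and $w=x\odot y$, the statement to be proved reduces to the identity $(x\odot y)^-\odot x^{-\sim}\odot y^{-\sim}=0$.

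To establish this identity I would first rewrite $(x\odot y)^-$ as $x\ra y^-$ by Proposition \ref{psh-20-10}$(5)$, and then invoke Proposition \ref{psh-30}$(4)$ --- this is the one and only place where goodness of $A$ is used --- to replace $x\ra y^-$ by $x^{-\sim}\ra y^-$. Proposition \ref{psh-20}$(2)$ then gives $(x^{-\sim}\ra y^-)\odot x^{-\sim}=x^{-\sim}\wedge y^-$, so after associativity (Proposition \ref{psh-20}$(1)$) the left-hand side becomes $(x^{-\sim}\wedge y^-)\odot y^{-\sim}$. Since $x^{-\sim}\wedge y^-\le y^-$, monotonicity of $\odot$ (Proposition \ref{psh-20}$(13)$) bounds this above by $y^-\odot y^{-\sim}$, and $y^-\odot y^{-\sim}=y^-\odot(y^-\rs 0)=y^-\wedge 0=0$ by Proposition \ref{psh-20}$(2)$ together with $0$ being least. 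Hence the product is $0$, which is exactly the identity we needed, so $x^{-\sim}\odot y^{-\sim}\le(x\odot y)^{-\sim}$.

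Since the monoid $\odot$ is non-commutative, the only point that needs care is the bookkeeping of the left/right order of the factors: one must use the left-handed forms $(x\odot y)^-=x\ra y^-$ and $(u\ra v)\odot u=u\wedge v$, and keep the $x^{-\sim}$ factor immediately to the right of $(x\odot y)^-$ so that Proposition \ref{psh-20}$(2)$ applies as stated. Beyond that the argument is just a short chain of already-proved identities, so I do not expect a genuine obstacle; the only real idea is the initial reduction of the inequality $z\le w^{-\sim}$ to the equation $w^-\odot z=0$.
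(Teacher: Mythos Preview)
Your argument is correct. The reduction $z\le w^{-\sim}\iff w^-\odot z=0$ is a clean way to turn the order statement into an equation, and each subsequent step is properly justified; in particular the left/right bookkeeping is handled correctly when you apply Proposition~\ref{psh-20}$(2)$ with the factor $x^{-\sim}$ on the right.

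The paper takes a slightly different and somewhat shorter route: instead of reducing to the identity $(x\odot y)^-\odot x^{-\sim}\odot y^{-\sim}=0$, it shows directly that the two sides of the desired inequality have the \emph{same} left negation, namely
\[
(x^{-\sim}\odot y^{-\sim})^{-}=x^{-\sim}\ra y^{-\sim-}=x\ra y^{-\sim-}=x\ra y^{-}=(x\odot y)^{-},
\]
using Propositions~\ref{psh-20-10}$(5)$, \ref{psh-30}$(4)$ and \ref{psh-20-10}$(3)$; then applying $^{\sim}$ and $z\le z^{-\sim}$ finishes. Both proofs pivot on the same goodness identity $x\ra y^{-}=x^{-\sim}\ra y^{-}$ from Proposition~\ref{psh-30}$(4)$. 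The paper's version has the bonus of yielding the sharper intermediate fact $(x^{-\sim}\odot y^{-\sim})^{-}=(x\odot y)^{-}$, whereas your product-equals-zero formulation is perhaps more transparently an order argument and avoids invoking $z\le z^{-\sim}$ at the end.
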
 
\begin{proof} Let $x, y\in A$. Applying Propositions \ref{psh-30}$(4)$ and \ref{psh-20-10}$(5)$,$(3)$ we have: \\
$\hspace*{2cm}$ $(x^{-\sim}\odot y^{-\sim})^{-}=x^{-\sim}\ra y^{-\sim-}=x\ra y^{-\sim-}=x\ra y^{-}= (x\odot y)^{-}$, \\ hence by Proposition \ref{psh-20-10}$(1)$ we get: \\
$\hspace*{2cm}$ $(x\odot y)^{-\sim}=(x^{-\sim}\odot y^{-\sim})^{-\sim}\ge x^{-\sim}\odot y^{-\sim}$. 
\end{proof}

If $A$ is a bounded pseudo-hoop, then the \emph{order} of $x\in A$, denoted $ord(x)$ is the smallest $n\in{\mathbb N}$ such that $x^n=0$. 
If there is no such $n$, then $ord(x)=\infty$.\\
We say that $A$ is \emph{locally finite} if for any $x\in A$, $x\neq 1$ implies $ord(x)<\infty$. \\
Let $(A,\odot, \rightarrow, \rightsquigarrow,1)$ be a pseudo-hoop. A non-empty subset $F$ of $A$ is a \emph{filter} of $A$ if for all $x,y\in A$ the following conditions are satisfied:\\
$(F_1)$ $x,y \in F$ implies $x\odot y \in F;$\\
$(F_2)$ $x\in F$ and $x\leq y$ implies $y\in F$.\\
A filter $F$ of $A$ is \emph{proper} if $F\neq A$.\\
A filter $H$ of $A$ is called \emph{normal} if for every $x,y\in A$, $x\rightarrow y\in A$ iff 
$x\rightsquigarrow y\in A$. \\
A \emph{maximal} filter or \emph{ultrafilter} is a proper filter $F$ of $A$  that is not included in any other proper filter of $A$. 
Denote by: \\
$\hspace*{2cm}$ ${\mathcal F}(A)$ the set of all filters of $A$, \\
$\hspace*{2cm}$ ${\mathcal F}_n(A)$ the set of all normal filters of $A$, \\
$\hspace*{2cm}$ ${\mathcal F}_m(A)$ the set of all maximal filters of $A$. \\ 
Obviously $\{\{1\},A\}\subseteq {\mathcal F}_n(A)\subseteq {\mathcal F}(A)$. \\
Given $H\in {\mathcal F}_n(A)$, the relation $\Theta_H$ on $A$ defined by $(x,y)\in \Theta_H$ iff 
$x\ra y\in H$ and $y\ra x\in H$ is a congruence on $A$. 
Then $H=[1]_{\Theta_H}$ and $A/H=(A/\Theta_H,\rightarrow, \rightsquigarrow,[1]_{\Theta_H})$ is a pseudo-hoop  
and we write $x/H=[x]_{\Theta_H}$ for every $x\in A$ (see \cite{Geo16}). \\
The function $\pi_H: A \longrightarrow A/H$ defined by $\pi_H(x)=x/H$ for any $x\in A$ is a surjective homomorphism which is called the \emph{canonical projection} from $A$ to $A/H$. One can easily prove that $\Ker(\pi_H)=H$. \\
A pseudo-hoop $A$ is called \emph{simple} if $\{1\}$ is the unique proper filter of $A$.\\
The subset $F \subseteq A$ is called a \emph{deductive system} of $A$ if it satisfies the following conditions:\\
$(DS_1)$ $1 \in F;$ \\
$(DS_2)$ for all $x, y \in A$, if $x, x \ra y \in F$, then $y \in F$. \\
Let $A$ be a pseudo-hoop. Then $F\subseteq A$ with $1\in F$ is a deductive
system of $A$ if and only if it satisfies the condition: \\
$(DS_2^{\prime})$ for all $x, y \in A$, if $x, x \rs y \in F$, then $y \in F$. \\
Let $A$ be pseudo-hoop and $F$ a nonempty subset of $A$.
Then the following are equivalent: \\
$(a)$ $F$ is a deductive system of $A;$ \\
$(b)$ $F$ is a filter of $A$. \\
If $X\subseteq A$, we denote by $<X>$ the filter generated by $X$. If $X=\{x\}$, then we use the notation 
$<x>$ instead of $<\{x\}>$, and $<x>$ is called the \emph{principal filter} generated by $x$. 

\begin{prop} \label{psh-50} $\rm($\cite{Geo16}$\rm)$ If $A$ is a pseudo-hoop and $X\subseteq A$, then \\ 
$<X>=\{y\in A\mid y\geq x_1\odot x_2\odot \dots\odot x_n$ for some $n\geq 1$ and 
      $x_1,x_2,\dots,x_n\in X\}$ \\
$\hspace*{1.0cm}$ $=\{y\in A\mid x_1\rightarrow(x_2\rightarrow(\dots(x_n\rightarrow y)\dots))=1$ 
                   for some $n\geq 1$ and $x_1,x_2,\dots,x_n\in X\}$ \\
$\hspace*{1.0cm}$ $=\{y\in A\mid x_1\rightsquigarrow(x_2\rightsquigarrow(\dots(x_n\rightsquigarrow y)\dots))=1$ 
                     for some $n\geq 1$ and $x_1,x_2,\dots,x_n\in X\}$. \\
In particular, the principal filter generated by an element $x\in A$ is \\
$\hspace*{2.0cm}$ $<x>=\{y\in A \mid x^n\le y\}$ for some $n\geq 1$ \\
$\hspace*{3.1cm}$ $=\{y\in A \mid x\ra^n y\}$ for some $n\geq 1$ \\
$\hspace*{3.1cm}$ $=\{y\in A \mid x\rs^n y\}$ for some $n\geq 1$.      
\end{prop}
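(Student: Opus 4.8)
Denote by $F(X)$ the first of the three displayed sets, that is, the set of all $y\in A$ for which $y\ge x_1\odot\dots\odot x_n$ holds for some $n\ge 1$ and some $x_1,\dots,x_n\in X$ (we assume $X\neq\emptyset$; for $X=\emptyset$ one has $\langle X\rangle=\{1\}$, and the statement is to be read with the empty-product convention). The plan is to prove that $F(X)$ is the least filter of $A$ containing $X$, so that $F(X)=\langle X\rangle$, and then to obtain the two remaining descriptions by rewriting the inequality $x_1\odot\dots\odot x_n\le y$ by means of the residuation axioms $(A_3)$ and $(A_4)$.

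First I would check that $F(X)$ is a filter. It is nonempty, since $x\in F(X)$ for every $x\in X$ (take $n=1$ and use $x\le x$). Condition $(F_2)$ is immediate: if $y\ge x_1\odot\dots\odot x_n$ and $y\le z$, then $z\ge x_1\odot\dots\odot x_n$. For $(F_1)$, if $y_1\ge x_1\odot\dots\odot x_m$ and $y_2\ge x_1'\odot\dots\odot x_k'$ with all listed elements in $X$, then associativity of $\odot$ (Proposition \ref{psh-20}$(1)$) together with its monotonicity (Proposition \ref{psh-20}$(13)$) give $y_1\odot y_2\ge x_1\odot\dots\odot x_m\odot x_1'\odot\dots\odot x_k'$, again a product of elements of $X$, so $y_1\odot y_2\in F(X)$.

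Next I would show $F(X)$ is smallest with this property. Clearly $X\subseteq F(X)$. If $G$ is any filter with $X\subseteq G$ and $y\in F(X)$, pick $x_1,\dots,x_n\in X$ with $x_1\odot\dots\odot x_n\le y$; iterating $(F_1)$ inside $G$ gives $x_1\odot\dots\odot x_n\in G$, and $(F_2)$ then gives $y\in G$. Hence $F(X)\subseteq G$, and therefore $F(X)=\langle X\rangle$.

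It remains to rewrite $F(X)$. Iterating axiom $(A_3)$ gives $(x_1\odot\dots\odot x_n)\ra y=x_1\ra(x_2\ra(\dots(x_n\ra y)\dots))$, and by the definition of $\le$ this equals $1$ precisely when $x_1\odot\dots\odot x_n\le y$; this identifies the second set with $F(X)$. Iterating axiom $(A_4)$ gives $(x_1\odot\dots\odot x_n)\rs y=x_n\rs(x_{n-1}\rs(\dots(x_1\rs y)\dots))$; since $X$ is a set and the tuple $(x_1,\dots,x_n)$ is only existentially quantified, passing to the reverse tuple changes nothing, so the third set equals $F(X)$ as well. Finally, specializing to $X=\{x\}$ turns $x_1\odot\dots\odot x_n$ into $x^n$ and $x_1\ra(\dots(x_n\ra y)\dots)$ into $x\ra^n y$ (and similarly for $\rs$), yielding the stated descriptions of $\langle x\rangle$. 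The argument is essentially bookkeeping; the only small subtleties are the order reversal in the $\rs$-form (harmless for the reason just given) and the degenerate case $X=\emptyset$.
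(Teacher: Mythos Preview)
Your argument is correct and is the standard one. The paper does not give its own proof of this proposition; it is quoted from \cite{Geo16}, so there is nothing to compare against beyond noting that your route---show the first set is a filter, show it is the least filter containing $X$, then translate the inequality via iterated $(A_3)$ and $(A_4)$---is exactly the expected proof.

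Two minor remarks. First, your handling of the $\rs$-form is right: $(A_4)$ reverses the order of the factors, and the existential quantifier over tuples from $X$ absorbs that reversal. Second, your observation about $X=\emptyset$ is a genuine edge case the statement glosses over; reading it with the empty-product convention (so the single ``product'' is $1$ and $\langle\emptyset\rangle=\{1\}$) is the clean fix.
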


\begin{prop} \label{psh-60} $\rm($\cite{Geo16}$\rm)$ For any pseudo-hoop $A$ the following are equivalent:\\
$(a)$ $A$ is simple;\\
$(b)$ for all $x\in A$, if $x\neq 1$ then $<x>=A$.
\end{prop}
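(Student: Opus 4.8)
The plan is to prove the two implications directly, relying only on the fact that the principal filter $<x>$ is by definition the smallest filter of $A$ containing $x$ (as described explicitly in Proposition \ref{psh-50}), together with the elementary observation that $\{1\}$ is always a filter of $A$: it is closed under $\odot$ since $1\odot 1=1$, and it is upward closed because $1$ is the greatest element, so antisymmetry of $\leq$ gives that $1\leq y$ forces $y=1$.

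First I would establish $(a)\Rightarrow(b)$. Assume $A$ is simple and fix $x\in A$ with $x\neq 1$. The set $<x>$ is a filter of $A$ containing $x$, hence $<x>\neq\{1\}$. Since $A$ is simple, $\{1\}$ is the unique proper filter, so the only filters of $A$ are $\{1\}$ and $A$; as $<x>$ is not $\{1\}$, we conclude $<x>=A$.

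Next I would establish $(b)\Rightarrow(a)$. Let $F$ be an arbitrary proper filter of $A$; the goal is to show $F=\{1\}$. Suppose to the contrary that there is some $x\in F$ with $x\neq 1$. Because $F$ is a filter containing $x$ and $<x>$ is the least filter containing $x$, we get $<x>\subseteq F$; concretely, every $y\geq x^n$ lies in $F$ by $(F_1)$ and $(F_2)$. By hypothesis $(b)$ we have $<x>=A$, so $F=A$, contradicting the properness of $F$. Hence $F=\{1\}$, and since $F$ was an arbitrary proper filter, $A$ is simple.

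I do not anticipate any real obstacle in this argument: the entire content is the characterization of $<x>$ as the smallest filter containing $x$ (Proposition \ref{psh-50}) plus the triviality that $\{1\}$ is a proper filter whenever $A$ is nontrivial, so that the notion of simplicity is not vacuous. The only mildly delicate point worth spelling out is the inclusion $<x>\subseteq F$ for $x\in F$, which follows at once from the explicit description of $<x>$ together with the two defining closure properties of a filter.
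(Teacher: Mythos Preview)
Your argument is correct and is the standard proof of this elementary equivalence. Note that the paper does not supply its own proof of this proposition: it is simply quoted from \cite{Geo16}, so there is no in-paper proof to compare against. Your reasoning---that $\langle x\rangle$ is the smallest filter containing $x$, so any proper filter containing some $x\neq 1$ must contain $\langle x\rangle=A$---is exactly the expected one and needs no further elaboration.
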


\begin{prop} \label{psh-waj-110} Let $A$ be a pseudo-hoop. The following hold:\\
$(1)$ $A$ is simple if and only if for all $x, y\in A$, $x\ne 1$, there exists $n\in {\mathbb N}$ such that 
      $x\ra ^n y=1;$ \\
$(2)$ $A$ is simple if and only if for all $x, y\in A$, $x\ne 1$, there exists $n\in {\mathbb N}$ such that 
      $x\rs ^n y=1;$ \\      
$(3)$ if $A$ is simple, then for all $x, y\in A$, $y\ra x=x$ implies $x=1$ or $y=1;$ \\
$(4)$ if $A$ is simple, then for all $x, y\in A$, $y\rs x=x$ implies $x=1$ or $y=1$.   
\end{prop}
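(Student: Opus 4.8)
The plan is to deduce $(1)$ and $(2)$ almost immediately from the characterizations of simplicity and of principal filters already available, and then to bootstrap $(3)$ and $(4)$ from $(1)$ and $(2)$ by a trivial induction.

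For $(1)$ I would start from Proposition \ref{psh-60}, which says that $A$ is simple exactly when $\langle x\rangle=A$ for every $x\ne 1$. Then I would rewrite the condition $\langle x\rangle=A$ using the explicit description of the principal filter in Proposition \ref{psh-50}, namely $\langle x\rangle=\{y\in A\mid x\ra^n y=1\text{ for some }n\ge 1\}$: asking that this set be all of $A$ is the same as asking that for every $y\in A$ there be an $n\in{\mathbb N}$ with $x\ra^n y=1$. Chaining the two equivalences gives $(1)$ verbatim. Part $(2)$ is the mirror statement and is obtained in exactly the same way from the $\rs$-version of the description of $\langle x\rangle$ in Proposition \ref{psh-50}.

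For $(3)$, assume $A$ is simple and $y\ra x=x$; I may assume $y\ne 1$ and must produce $x=1$. The key observation is that the hypothesis $y\ra x=x$ propagates: by induction on $n$ one gets $y\ra^n x=x$ for all $n\ge 0$, the base case $n=0$ being the definition $y\ra^0 x=x$ and the inductive step being $y\ra^n x=y\ra(y\ra^{n-1}x)=y\ra x=x$. Now invoke part $(1)$ with the element $y\ne 1$: there is some $n\in{\mathbb N}$ with $y\ra^n x=1$, and combining this with $y\ra^n x=x$ forces $x=1$. Part $(4)$ is identical after replacing $\ra$ by $\rs$ and appealing to part $(2)$ instead of $(1)$.

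I do not expect any serious obstacle here; the whole argument is a repackaging of Propositions \ref{psh-50} and \ref{psh-60} together with a one-line induction. The only point requiring a little care is the bookkeeping with the iterated-implication notation $x\ra^n y$ (in particular that $x\ra^0 y=y$ and $x\ra^1 y=x\ra y$), so that the inductions in $(3)$ and $(4)$ are set up with the correct base case.
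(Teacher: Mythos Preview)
Your proposal is correct and follows essentially the same route as the paper: both use Propositions \ref{psh-60} and \ref{psh-50} to handle $(1)$ and $(2)$, and then derive $(3)$ and $(4)$ by the same observation that $y\ra x=x$ forces $y\ra^n x=x$ for all $n$, contradicting $(1)$ (resp.\ $(2)$) unless $x=1$ or $y=1$.
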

\begin{proof} 
$(1)$ If $A$ is simple, then for all $x\in A$, $x\ne 1$ we have $A=<x>=\{z\in A \mid x\ra^n z=1\}$ for some 
$n\in {\mathbb N}$. Since $y\in A$ and $A=<x>$, then there exists $n\in {\mathbb N}$ such that $x\ra ^n y=1$. 
Conversely, suppose that for all $x, y\in A$, $x\ne 1$ we have $x\ra ^n y=1$ for some $n\in {\mathbb N}$. \\
It follows that $y\in <x>$ for all $y\in A$, that is $<x>=A$, so $A$ is a simple pseudo-hoop. \\
$(2)$ Similarly as $(1)$. \\
$(3)$ Assume that $A$ is simple and let $x, y\in A$ such that $y\ra x=x$. It follows that $y\ra^n x=x$ 
for all $n\in {\mathbb N}$. If $y\ne 1$, then according to $(1)$, there exists $n_0\in {\mathbb N}$ such that 
$y\ra^{n_0} x=1$, hence $x=1$. \\
$(4)$ Similarly as $(3)$.
\end{proof}

A pseudo-hoop $(A,\odot, \rightarrow, \rightsquigarrow,1)$ is said to be \emph{cancellative} if the monoid $(A,\odot,1)$ is cancellative, that is $x\odot a = y\odot a$ implies $x=y$ and $a\odot x = a\odot y$ implies $x=y$ for all $x,y,a \in A$. 

\begin{prop} \label{psh-60-10} $\rm($\cite{Geo16}$\rm)$
A pseudo-hoop $A$ is cancellative iff $y\ra x\odot y=x$ and $y\rs y\odot x=x$ for all $x,y\in A$. 
\end{prop}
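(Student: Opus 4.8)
The plan is to prove the two directions separately. For the ``only if'' direction, assume $A$ is cancellative and let $x,y \in A$. By Proposition \ref{psh-20}$(9)$ we always have $x \le y \ra x\odot y$, so it suffices to show $y \ra x\odot y \le x$, equivalently $(y\ra x\odot y)\odot y \le x\odot y$ combined with cancellativity on the right; but in fact equality is easier. Using Proposition \ref{psh-20}$(2)$ (the meet-semilattice identity) together with $x\odot y \le y$ from Proposition \ref{psh-20}$(4)$, we get $(y \ra x\odot y)\odot y = y \wedge (x\odot y) = x\odot y = x\odot y$. Hence $(y\ra x\odot y)\odot y = x \odot y$, and right-cancellation by $y$ yields $y\ra x\odot y = x$. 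Symmetrically, Proposition \ref{psh-20}$(2)$ gives $y\odot (y\rs x\odot y) = y\wedge(x\odot y) = x\odot y = y\odot x$, wait — one must be careful with the order of the factors here. The identity reads $x\wedge y = x\odot(x\rs y)$, so with $x$ replaced by $y$ and $y$ replaced by $x\odot y$ we obtain $y \wedge (x\odot y) = y\odot(y\rs x\odot y)$; since $x\odot y \le y$ this equals $x\odot y$, and left-cancellation by $y$ gives $y\rs y\odot x = x$ (noting $x\odot y$ versus $y\odot x$: we actually get $y\rs (x\odot y)$; to land on the stated form one uses the symmetric computation starting from $x\wedge y = y\odot(y\rs x)$ applied appropriately, or simply restates with $y\odot x$ in place of $x\odot y$). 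I would streamline this by working with the expression $y\rs (x\odot y)$ throughout and only at the end matching it to the proposition's formulation.

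For the ``if'' direction, assume $y\ra x\odot y = x$ and $y\rs y\odot x = x$ for all $x,y \in A$. I must show the monoid $(A,\odot,1)$ is cancellative. Suppose $a\odot x = a\odot y$; I want $x = y$. Using the hypothesis in the form $a \rs a\odot x = x$ and $a\rs a\odot y = y$, we get $x = a\rs a\odot x = a\rs a\odot y = y$ directly. Symmetrically, if $x\odot a = y\odot a$, then $x = a\ra x\odot a = a\ra y\odot a = y$. This establishes both cancellation laws, so the monoid is cancellative.

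The main obstacle, such as it is, lies entirely in the ``only if'' direction: one must correctly handle the non-commutativity so that the left-division identity comes out with the factors in the order the proposition demands ($y\odot x$ inside $y\rs\,\cdot$, not $x\odot y$). The cleanest route is to apply Proposition \ref{psh-20}$(2)$ in both of its forms --- $x\wedge y = (x\ra y)\odot x$ and $x\wedge y = x\odot(x\rs y)$ --- with the substitutions chosen to match each target identity, and to use Proposition \ref{psh-20}$(4)$ (that $u\odot v \le u$ and $u\odot v \le v$) to collapse the relevant meet. Everything else is a one-line appeal to cancellativity. I expect the whole argument to take only a few lines once the substitutions are pinned down.
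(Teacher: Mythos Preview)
The paper does not supply its own proof of this proposition; it is simply cited from \cite{Geo16}. So there is nothing to compare your argument against, and I can only assess it on its own merits.

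Your argument is correct in substance. The ``if'' direction is clean and complete. For the ``only if'' direction your idea is the right one: use the meet identity from Proposition~\ref{psh-20}$(2)$ together with $u\odot v \le u,v$ to obtain $(y\ra x\odot y)\odot y = y\wedge(x\odot y) = x\odot y$ and then right-cancel; dually, $y\odot(y\rs y\odot x) = y\wedge(y\odot x) = y\odot x$ and then left-cancel. The only defect is expositional: midway through you start computing $y\rs(x\odot y)$ when the target is $y\rs(y\odot x)$, notice the mismatch, and leave the reader a hedge rather than a fix. Simply carry out the second computation with $y\odot x$ from the start---the substitution $x\mapsto y$, $y\mapsto y\odot x$ in the identity $x\odot(x\rs y)=x\wedge y$ gives exactly what you need in one line, and no detour through $x\odot y$ is required. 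Once you clean that up, the proof is a tidy paragraph.
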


\begin{prop} \label{psh-70} $\rm($\cite{Geo16}$\rm)$ Let $A$ be a cancellative pseudo-hoop. Then for all $x,y,z \in A$ the following hold:\\
$(1)$ $x\ra y=x\odot z \ra y\odot z$ and $x\rs y=z\odot x \rs z\odot y;$\\
$(2)$ $x\leq y$ iff $x\odot z \leq y\odot z$ iff $z\odot x \leq z\odot y$.
\end{prop}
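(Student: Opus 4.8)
The plan is to establish $(1)$ first, from the monotonicity facts already on hand, and then read off $(2)$ as an immediate corollary. For the first identity in $(1)$, one inequality, namely $x\ra y \le (x\odot z)\ra(y\odot z)$, is exactly Proposition \ref{psh-20}$(12)$, so only the reverse inequality needs work. I would set $t := (x\odot z)\ra(y\odot z)$; then by reflexivity and Proposition \ref{psh-20}$(3)$ one has $t\odot(x\odot z)\le y\odot z$, and since $(A,\odot,1)$ is a monoid (Proposition \ref{psh-20}$(1)$) this is $(t\odot x)\odot z\le y\odot z$. Now apply $z\ra(-)$, which is order-preserving by Proposition \ref{psh-20}$(6)$: $z\ra((t\odot x)\odot z)\le z\ra(y\odot z)$. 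By the cancellativity characterization in Proposition \ref{psh-60-10} the left side equals $t\odot x$ and the right side equals $y$, so $t\odot x\le y$, and one more use of Proposition \ref{psh-20}$(3)$ gives $t\le x\ra y$. Hence $(x\odot z)\ra(y\odot z)\le x\ra y$, and with the first inequality we get equality.

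The second identity in $(1)$ should be proved symmetrically, taking care to use the $\rs$-sided versions of each tool rather than appealing to a vague symmetry: the inequality $x\rs y\le (z\odot x)\rs(z\odot y)$ is the second half of Proposition \ref{psh-20}$(12)$; and for the converse one puts $s := (z\odot x)\rs(z\odot y)$, uses the $\rs$-part of Proposition \ref{psh-20}$(3)$ to obtain $(z\odot x)\odot s\le z\odot y$, i.e. $z\odot(x\odot s)\le z\odot y$, applies $z\rs(-)$ (order-preserving by the $\rs$-part of Proposition \ref{psh-20}$(6)$), and invokes the identity $z\rs z\odot w=w$ from Proposition \ref{psh-60-10} to conclude $x\odot s\le y$, whence $s\le x\rs y$ by the $\rs$-part of Proposition \ref{psh-20}$(3)$.

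Finally, $(2)$ follows from $(1)$ with almost no extra effort. If $x\le y$ then $x\odot z\le y\odot z$ and $z\odot x\le z\odot y$ by Proposition \ref{psh-20}$(13)$. Conversely, if $x\odot z\le y\odot z$ then $(x\odot z)\ra(y\odot z)=1$ by the definition of $\le$, so by $(1)$ we get $x\ra y=1$, i.e. $x\le y$; the implication $z\odot x\le z\odot y\Rightarrow x\le y$ is obtained the same way from the second identity of $(1)$ together with the $\rs$-description of $\le$.

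I do not anticipate a genuine obstacle here. The only points requiring care are fixing the correct directions of the adjunction in Proposition \ref{psh-20}$(3)$ and of the monotonicity in Proposition \ref{psh-20}$(6)$, and keeping the $\ra$- and $\rs$-sided statements of each lemma separate so that the $\rs$-identity is genuinely derived with its matching instances of Propositions \ref{psh-20} and \ref{psh-60-10}.
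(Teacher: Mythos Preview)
Your argument is correct. Both identities in $(1)$ are obtained by combining the general inequality from Proposition~\ref{psh-20}$(12)$ with the reverse inequality, which you derive cleanly from the adjunction in Proposition~\ref{psh-20}$(3)$, associativity of $\odot$, monotonicity of the residuals (Proposition~\ref{psh-20}$(6)$), and the cancellativity characterization $z\ra w\odot z=w$, $z\rs z\odot w=w$ from Proposition~\ref{psh-60-10}. Part $(2)$ then follows immediately from $(1)$ and the definition of $\le$, as you say.

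As for comparison with the paper: the paper does not supply its own proof of this proposition; it is stated as a result quoted from \cite{Geo16}. Your proof is a standard and complete verification, and nothing further is needed.
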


\begin{ex} \label{psh-110} $\rm($\cite{Geo16}$\rm)$ Let $\textbf{G}=(G,+,-,0,\vee,\wedge)$ be an arbitrary $\ell$-group and $N(G)$ the negative cone of $\textbf{G}$, that is $N(G)=\{x\in G \mid x\leq 0\}$. On $N(G)$ we define the 
following operations:\\
$\hspace*{3cm}$ $x\odot y=x+y$, \\
$\hspace*{3cm}$ $x\ra y=(y-x)\wedge 0$, \\
$\hspace*{3cm}$ $x\rs y=(-x+y)\wedge 0$. \\
Then $\textbf{N(G)}=(N(G),\odot,\ra, \rs, 0)$ is a cancellative pseudo-hoop.
\end{ex}

Let $A$ be a pseudo-hoop. In the next sections we will also use the notations:\\
$\hspace*{3cm}$ $x\vee_1 y= (x\ra y)\rs y$ and $x\vee_2 y= (x\rs y)\ra y$, \\
for all $x, y\in A$. 
If $A$ is bounded, then obviously $x^{-\sim}= x\vee_1 0$ and $x^{\sim-}= x\vee_2 0$.                 

\begin{prop} \label{psh-80} $\rm($\cite{Ciu10}$\rm)$ In any pseudo-hoop $A$ the following hold for all $x,y \in A$:\\
$(1)$ $1 \vee_1 x = x \vee_1 1 = 1 = 1 \vee_2 x = x \vee_2 1;$ \\
$(2)$ $x \leq y$ implies $x \vee_1 y = y$ and $x \vee_2 y = y;$\\
$(3)$ $x \vee_1 x = x \vee_2 x = x;$\\
$(4)$ if $x_1 \le x_2$ and $y_1 \le y_2,$ then $x_1\vee_1 y_1 \le x_2\vee_1 y_2$ and 
         $x_1\vee_2 y_1 \le x_2\vee_2 y_2;$ \\
$(5)$ $x, y\le x\vee_1 y, x\vee_2 y$.
\end{prop}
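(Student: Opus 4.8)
The plan is to reduce everything to the residuation law and the monotonicity statements already collected in Proposition~\ref{psh-20}, together with the axioms $(A_1)$, $(A_2)$, $(A_5)$ of Definition~\ref{psh-10}. First I would record the two bookkeeping identities $1\ra x=x=1\rs x$ and $x\ra 1=1=x\rs 1$: the former follows from $(A_1)$ and the residuation/monotonicity parts of Proposition~\ref{psh-20} (one has $x\le 1\ra x$ by Proposition~\ref{psh-20}$(4)$ and $1\ra x=(1\ra x)\odot 1\le x$ by Proposition~\ref{psh-20}$(3)$), while the latter holds because $x\le x\ra x=1$ by Proposition~\ref{psh-20}$(4)$ and $(A_2)$, i.e. $1$ is the top element. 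With these in hand, (1) is immediate: $1\vee_1 x=(1\ra x)\rs x=x\rs x=1$ by $(A_2)$, and $x\vee_1 1=(x\ra 1)\rs 1=1\rs 1=1$; the two $\vee_2$-statements follow by swapping $\ra$ and $\rs$ throughout.

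For (2), if $x\le y$ then $x\ra y=1$ by the definition of $\le$, hence $x\vee_1 y=1\rs y=y$, and likewise $x\rs y=1$ gives $x\vee_2 y=y$. Part (3) is the instance $y:=x$ of (2) (or directly $x\vee_1 x=(x\ra x)\rs x=1\rs x=x$). For (5) I would observe that $x\vee_1 y=(x\ra y)\rs y$ has the shape $z\rs y$, so $y\le x\vee_1 y$ by Proposition~\ref{psh-20}$(4)$, while $x\le(x\ra y)\rs y$ is exactly Proposition~\ref{psh-20}$(14)$; hence $x,y\le x\vee_1 y$, and symmetrically $x,y\le x\vee_2 y$ using the $\rs$-parts of Proposition~\ref{psh-20}$(4)$,$(14)$.

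The substance of the proposition is (4), and I would prove it by establishing monotonicity of $\vee_1$ in each argument separately and then chaining $x_1\vee_1 y_1\le x_2\vee_1 y_1\le x_2\vee_1 y_2$. Monotonicity in the first argument is short: $x_1\le x_2$ gives $x_2\ra y\le x_1\ra y$ by Proposition~\ref{psh-20}$(7)$, and then $(x_1\ra y)\rs y\le(x_2\ra y)\rs y$ by a second use of Proposition~\ref{psh-20}$(7)$. Monotonicity in the second argument, $(x\ra y_1)\rs y_1\le(x\ra y_2)\rs y_2$ for $y_1\le y_2$, is where I expect the real difficulty: here $y_1\le y_2$ forces $x\ra y_1\le x\ra y_2$, which enlarges the antecedent of the outer residual and so works against the desired inequality, so the two naive monotonicity steps point in opposite directions and must be reconciled. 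My intended route is to pass to the residuated form via Proposition~\ref{psh-20}$(3)$ (the goal becomes $(x\ra y_2)\odot\bigl((x\ra y_1)\rs y_1\bigr)\le y_2$), exploit the identity $(x\ra y)\odot\bigl((x\ra y)\rs y\bigr)=y$ extracted from $(A_5)$, and feed in the lower bounds from (5); the $\vee_2$-statement is then the mirror image. This last estimate is the crux of the argument.
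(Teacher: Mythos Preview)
The paper does not prove this proposition; it is quoted from \cite{Ciu10} without argument, so there is nothing to compare against. Your treatment of (1), (2), (3), (5) is clean and correct, and so is your monotonicity of $\vee_1$ in the first argument.

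The genuine problem is item (4), and it is not a gap in your write-up that a little more work would close: the second-argument monotonicity you are trying to prove is \emph{false} in general pseudo-hoops. Take the three-element Heyting algebra $0<a<1$ with $\odot=\wedge$ and the Heyting implication; this is a (bounded, commutative) hoop. Then
\[
a\vee_1 0=(a\ra 0)\ra 0=0\ra 0=1,\qquad a\vee_1 a=(a\ra a)\ra a=1\ra a=a,
\]
so with $x_1=x_2=a$, $y_1=0\le a=y_2$ the conclusion $a\vee_1 0\le a\vee_1 a$ fails. This is exactly why your two ``naive monotonicity steps point in opposite directions'': you correctly located the obstruction, but there is no way around it. Your proposed route via $(x\ra y_2)\odot\bigl((x\ra y_1)\rs y_1\bigr)\le y_2$ cannot succeed for the same reason (in the example, $(a\ra a)\wedge\bigl((a\ra 0)\ra 0\bigr)=1\wedge 1=1\not\le a$).

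So: keep your proofs of (1)--(3), (5) and of the first-argument half of (4); but note that (4) as stated does not hold in arbitrary pseudo-hoops. If an additional hypothesis (e.g.\ Wajsberg, where $\vee_1=\vee_2$ is the lattice join) is intended, the monotonicity is immediate; in full generality the claim should be discarded.
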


\begin{prop} \label{psh-90} $\rm($\cite{Ciu10}$\rm)$ Let $A$ be a pseudo-hoop. Then for all $x,y \in A$ the following hold:\\
$(1)$ $x \vee_1 y \ra y = x \ra y$ and $x \vee_2 y \rs y = x \rs y;$ \\
$(2)$ $x \vee_1 y \ra x = y \ra x$ and $x \vee_2 y \rs x = y \rs x$.      
\end{prop}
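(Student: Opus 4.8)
The plan is to prove each of the four identities by establishing two inequalities, using the definitions $x\vee_1 y=(x\ra y)\rs y$ and $x\vee_2 y=(x\rs y)\ra y$ together with the monotonicity and adjunction properties collected in Propositions \ref{psh-20}, \ref{psh-20-05-10} and \ref{psh-80}. I will treat the first identity in $(1)$ in detail and indicate that the other three follow by the same pattern (with $(2)$ being genuinely symmetric and the $\vee_2$-statements being the left/right mirror image).

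For $(1)$, $x\vee_1 y\ra y=x\ra y$: by Proposition \ref{psh-80}$(5)$ we have $x\le x\vee_1 y$, hence by Proposition \ref{psh-20}$(7)$ the inequality $x\vee_1 y\ra y\le x\ra y$ holds. For the reverse inequality I would use Proposition \ref{psh-20-05-10}$(2)$, which says $(x\ra y)\rs(y\ra x)=y\ra x$; however the more direct route is to note $x\vee_1 y=(x\ra y)\rs y$ and apply the adjunction $(A_4)$-flavoured identity from Proposition \ref{psh-20}$(3)$: we want $x\ra y\le ((x\ra y)\rs y)\ra y$, which after rewriting $u=x\ra y$ is exactly $u\le (u\rs y)\ra y$, i.e. Proposition \ref{psh-20}$(14)$ (the second displayed inequality there). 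Combining the two inequalities with antisymmetry $(bck_5)$ gives the equality. The argument for $x\vee_2 y\rs y=x\rs y$ is the mirror image, using the first inequality of Proposition \ref{psh-20}$(14)$, $x\le(x\ra y)\rs y$, in the form $u\le(u\ra y)\rs y$ with $u=x\rs y$.

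For $(2)$, $x\vee_1 y\ra x=y\ra x$: again $x\le x\vee_1 y$ gives $x\vee_1 y\ra x\le y\ra x$ is the wrong direction, so instead note that $y\le (x\ra y)\rs y=x\vee_1 y$ by Proposition \ref{psh-80}$(5)$, whence $x\vee_1 y\ra x\le y\ra x$ by monotonicity (Proposition \ref{psh-20}$(7)$). For the reverse, I expect the key computational step — and the main obstacle — to be showing $y\ra x\le ((x\ra y)\rs y)\ra x$; here the cleanest approach is Proposition \ref{psh-20-05-10}$(2)$ directly, since $(x\ra y)\rs(y\ra x)=y\ra x$ together with $(A_5)$ and $(A_4)$ lets one transport the factor through, or alternatively one observes $((x\ra y)\rs y)\odot(y\ra x)$ and bounds it below by manipulating with $(A_5)$ and Proposition \ref{psh-20}$(2)$. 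The $\vee_2$-case is symmetric. Throughout, the only real care needed is in tracking which of $\ra$ and $\rs$ appears where, since the structure is non-commutative; once the correct mirror of each cited lemma is selected, each equality reduces to a two-line chain of $\le$'s closed by antisymmetry.
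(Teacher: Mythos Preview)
Your argument for part $(1)$ is complete and correct: the inequality $x\vee_1 y\ra y\le x\ra y$ comes from $x\le x\vee_1 y$ and antitonicity, while the reverse inequality is exactly Proposition~\ref{psh-20}$(14)$ applied to $u=x\ra y$. The paper gives no proof of its own (the result is only quoted from \cite{Ciu10}), so there is nothing further to compare on this half.

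For part $(2)$, however, the step you flag as ``the main obstacle'' is not merely hard --- it is false, and hence so is the stated identity in an arbitrary pseudo-hoop. Take the three-element G\"odel chain $\{0,a,1\}$ with $x\odot y=x\wedge y$ and $x\ra y=1$ if $x\le y$, $x\ra y=y$ otherwise; this is a hoop. With $x=a$ and $y=0$ one gets $x\vee_1 y=(a\ra 0)\ra 0=0\ra 0=1$, so $(x\vee_1 y)\ra x=1\ra a=a$, whereas $y\ra x=0\ra a=1$. The same failure occurs in the paper's own Example~\ref{psh-ff-70}: for $x=c$, $y=0$ one computes $x\vee_1 y=1$ and $(x\vee_1 y)\ra x=c\ne 1=y\ra x$. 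Thus the reverse inequality $y\ra x\le((x\ra y)\rs y)\ra x$ that you were hoping to extract from Proposition~\ref{psh-20-05-10}$(2)$ and $(A_5)$ simply does not hold in general. Note that the only places the paper invokes Proposition~\ref{psh-90} (Theorems~\ref{psh-waj-120} and Proposition~\ref{psh-st-50-10}$(5)$) actually use the identity $y\vee_1 x\ra x=y\ra x$, which is part $(1)$ with the variables interchanged, not part $(2)$; so the gap does not propagate. Your instinct that the reverse inequality in $(2)$ was the crux was exactly right --- it just cannot be filled without an extra hypothesis such as the Wajsberg condition, under which $x\vee_1 y=y\vee_1 x$ and the claim reduces to part $(1)$.
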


A \emph{bounded non-commutative R$\ell$-monoid} is an algebra $(A,\odot,\vee,\wedge,\ra,\rs,0,1)$ of the type $(2,2,2,2,2,0,0)$ satisfying the following conditions:\\
$(R\ell_1)$ $(A,\odot,1)$ is a monoid;\\
$(R\ell_2)$ $(A,\vee,\wedge,0,1)$ is a bounded lattice with bounds $0$ and $1$ (bottom and top);\\
$(R\ell_3)$ $x\odot y\leq z$ iff $x\leq y\ra z$ iff $y\leq x\rs z$ for all $x,y,z \in A;$\\
$(R\ell_4)$ $(x\ra y)\odot x=y\odot (y\rs x)=x\wedge y$ for all $x,y \in A$. \\
For more details about the properties of a bounded R$\ell$-monoid we refer the reader to \cite{DvRa1} and 
\cite{DvRa2}. \\
A bounded non-commutative R$\ell$-monoid satisfying the \emph{pre-linearity} condition: \\
(prel) $(x\ra y)\vee (y\ra x)=(x\rs y)\vee (y\rs x)=1$, \\
is a \emph{pseudo-BL algebra}. \\
If the algebra $(A,\odot,\vee,\wedge,\ra,\rs,0,1)$ satisfies conditions $(R\ell_1)$, $(R\ell_2)$, $(R\ell_3)$ and 
(prel), then it is a \emph{pseudo-MTL algebra}. 

Let $A, B$ be two pseudo-hoops. A map $f: A\longrightarrow B$ is called a \emph{pseudo-hoop homomorphism} if 
it satisfies the following axioms for all $x, y\in A$: \\
$(i)$ $f(x\odot y)=f(x)\odot f(y);$ \\
$(ii)$ $f(x\ra y)=f(x)\ra f(y);$ \\
$(iii)$ $f(x\rs y)=f(x)\rs f(y)$. \\ 
If $A, B$ are bounded pseudo-hoops, then $f: A\longrightarrow B$ is a \emph{bounded pseudo-hoop homomorphism} if 
it satisfies axioms $(i)-(iii)$ and the following axiom:\\
$(iv)$ $f(0)=0$. \\
If $B=A$, then $f$ is called a \emph{pseudo-hoop endomorphism}. \\
One can easily check that, if $f$ is a pseudo-hoop homomorphism, then: \\
$(1)$ $f(1)=1;$ \\
$(2)$ $f(x\wedge y)=f(x)\wedge f(y);$ \\
$(3)$ $x\le y$ implies $f(x)\le f(y)$. \\
If $f$ is a bounded pseudo-hoop homomorphism, then the following hold: \\ 
$(4)$ $f(x^{-})=f(x)^{-};$ \\ 
$(5)$ $f(x^{\sim})=f(x)^{\sim}$. \\
(We use the same notations for the operations in both pseudo-hoops\textsl{}, but the reader must be aware that they are different). \\
Denote $\Ker(f)=\{x\in A \mid f(x)=1\}$. \\

The Bosbach states and state-morphisms on bounded pseudo-hoops were defined and studied in \cite{Ciu10}. 

\begin{Def} \label{psh-s-10}
A \emph{Bosbach state} on a bounded pseudo-hoop $(A,\ra,\rs,0,1)$ is a function $s: A\longrightarrow [0, 1]$ 
such that the following axioms hold for all $x, y\in A:$ \\
$(bs_1)$ $s(1)=0$ and $s(1)=1;$ \\
$(bs_2)$ $s(x)+s(x\ra y)=s(y)+s(y\ra x);$ \\
$(bs_3)$ $s(x)+s(x\rs y)=s(y)+s(y\rs x)$. \\
Denote by $\mathcal{BS}(A)$ the set of all Bosbach states on the bounded pseudo-hoop $A$. 
\end{Def} 

\begin{prop} \label{psh-s-20} Let $A$ be a bounded pseudo-hoop and let $s\in \mathcal{BS}(A)$. 
Then the following hold for all $x, y\in A$: \\
$(1)$ $s(x^{-})=s(x^{\sim})=1-s(x);$ \\
$(2)$ $s(x^{-\sim})=s(x^{\sim-})=s(x)$.  
\end{prop}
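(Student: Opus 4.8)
The plan is to derive both identities directly from the Bosbach state axioms by specializing the variable $y$ to the bottom element $0$. Note first that in a bounded pseudo-hoop we have $0\leq x$ for all $x$, so by the definition of $\leq$ we get $0\ra x=1$ and $0\rs x=1$. Also, reading $(bs_1)$ as $s(0)=0$ and $s(1)=1$ (the displayed "$s(1)=0$" is a typo for $s(0)=0$), these are the only facts beyond the axioms that I will need.

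For part $(1)$, I would put $y=0$ in axiom $(bs_2)$ to obtain $s(x)+s(x\ra 0)=s(0)+s(0\ra x)$. Since $x\ra 0=x^-$, $s(0)=0$, and $0\ra x=1$ with $s(1)=1$, this collapses to $s(x)+s(x^-)=1$, i.e. $s(x^-)=1-s(x)$. Symmetrically, putting $y=0$ in axiom $(bs_3)$ gives $s(x)+s(x\rs 0)=s(0)+s(0\rs x)$, and with $x\rs 0=x^\sim$, $s(0)=0$, $0\rs x=1$, $s(1)=1$ we get $s(x)+s(x^\sim)=1$, hence $s(x^\sim)=1-s(x)$. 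This establishes $s(x^-)=s(x^\sim)=1-s(x)$.

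For part $(2)$, I would simply iterate part $(1)$: applying the identity $s(z^\sim)=1-s(z)$ with $z=x^-$ yields $s(x^{-\sim})=s\big((x^-)^\sim\big)=1-s(x^-)=1-\big(1-s(x)\big)=s(x)$, and likewise applying $s(z^-)=1-s(z)$ with $z=x^\sim$ gives $s(x^{\sim-})=1-s(x^\sim)=s(x)$. Thus $s(x^{-\sim})=s(x^{\sim-})=s(x)$.

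There is no real obstacle here; the only point requiring a moment's attention is recognizing that $(bs_1)$ must be read as $s(0)=0,\ s(1)=1$ and that boundedness supplies $0\ra x=0\rs x=1$, after which both parts are immediate substitutions into $(bs_2)$ and $(bs_3)$.
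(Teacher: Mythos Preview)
Your proof is correct and is the standard argument. The paper states this proposition without proof (it is recalled from the reference \cite{Ciu10} on Bosbach states), so there is no proof in the paper to compare against; your direct substitution of $y=0$ into $(bs_2)$ and $(bs_3)$, together with the observation that $0\ra x=0\rs x=1$ and the corrected reading $s(0)=0$, $s(1)=1$, is exactly the expected derivation.
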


Let $s\in \mathcal{BS}(A)$ and define $\Ker(s)=\{x\in A \mid s(x)=1\}$, called the \emph{kernel} of $s$. \\
One can easily check that $\Ker(s)\in {\mathcal F}(A)$. \\  
The measures on bounded pseudo-BCK algebras were defined and studied in \cite{Ciu6} and these results are also 
valid for bounded pseudo-hoops. 

\begin{Def} \label{psh-m-10}
Let $(A,\ra,\rs,0,1)$ be a bounded pseudo-hoop. A mapping $m:A\longrightarrow [0, \infty)$ such that $m(0)=1$ and 
$m(x\ra y)=m(x\rs y)=m(y)-m(x)$ whenever $y\leq x$ is said to be a \emph{state-measure}. 
Denote by $\mathcal{M}(A)$ the set of all state-measures on $A$.
\end{Def}

\begin{prop} \label{psh-m-20} If $m\in \mathcal{M}(A)$, then the following hold for all $x, y\in A$: \\
$(1)$ $m(1)=0;$ \\
$(2)$ $m(x)\geq m(y)$ whenever $x\leq y;$  \\
$(3)$ $m(x\vee_1 y) = m(y\vee_1 x)$ and $m(x\vee_2 y) = m(y\vee_2 x);$ \\
$(4)$ $m(x\vee_1 y) = m(x\vee_2 y);$ \\
$(5)$ $m(x\ra y)=m(x\rs y);$ \\
$(6)$ $m(x^{-})=m(x^{\sim})=1-m(x);$ \\
$(7)$ $m(x^{-\sim})=m(x^{\sim-})=m(x)$.
\end{prop}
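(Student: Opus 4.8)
The plan is to reduce everything to two facts about a state-measure $m$: that $m(1)=0$, and — the crux — a ``meet formula'' $m(x\ra y)=m(x\rs y)=m(x\wedge y)-m(x)$ valid for \emph{all} $x,y\in A$. Once this formula is available, parts $(4)$--$(7)$ are immediate and part $(3)$ follows from a one-line computation.

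First I would obtain $m(1)=0$ by applying the defining identity of a state-measure to the pair $x\le x$: since $x\ra x=1$ by $(A_2)$, we get $m(1)=m(x\ra x)=m(x)-m(x)=0$, which is $(1)$. Part $(2)$ is then just the observation that if $x\le y$ then $m(y\ra x)=m(x)-m(y)$, and since $m$ takes values in $[0,\infty)$ this forces $m(x)\ge m(y)$.

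The main step is the meet formula, and it is where I expect the only genuinely non-routine idea to sit: one must notice that $\ra$ can be ``collapsed onto the meet'' so that the state-measure axiom becomes applicable. Concretely, Proposition \ref{psh-20}$(8)$ with the second argument equal to $x$ gives $x\ra(x\wedge y)=(x\ra x)\wedge(x\ra y)=x\ra y$, and similarly $x\rs(x\wedge y)=x\rs y$. Since $x\wedge y\le x$, the definition of a state-measure applies to the pair $(x,\,x\wedge y)$ and yields $m(x\ra(x\wedge y))=m(x\rs(x\wedge y))=m(x\wedge y)-m(x)$. Combining the two displays gives $m(x\ra y)=m(x\rs y)=m(x\wedge y)-m(x)$, which is exactly $(5)$. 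Everything after this is bookkeeping.

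For $(6)$, set $y=0$ in the meet formula: $m(x^-)=m(x\ra 0)=m(x\wedge 0)-m(x)=m(0)-m(x)=1-m(x)$, and likewise $m(x^{\sim})=m(x\rs 0)=1-m(x)$. For $(7)$, apply $(6)$ twice: $m(x^{-\sim})=1-m(x^-)=1-(1-m(x))=m(x)$, and symmetrically $m(x^{\sim-})=m(x)$. For $(3)$ and $(4)$, recall $x\vee_1 y=(x\ra y)\rs y$ and $x\vee_2 y=(x\rs y)\ra y$, and note $y\le x\ra y$ and $y\le x\rs y$ by Proposition \ref{psh-20}$(4)$; hence the state-measure axiom applies to the pairs $(x\ra y,\,y)$ and $(x\rs y,\,y)$, giving $m(x\vee_1 y)=m(y)-m(x\ra y)$ and $m(x\vee_2 y)=m(y)-m(x\rs y)$. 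Substituting the meet formula, both quantities equal $m(x)+m(y)-m(x\wedge y)$, an expression symmetric in $x$ and $y$; this proves $(3)$ and $(4)$ at once.
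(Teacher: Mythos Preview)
Your argument is correct. The key observation that $x\ra y=x\ra(x\wedge y)$ and $x\rs y=x\rs(x\wedge y)$ (via Proposition~\ref{psh-20}$(8)$) reduces the state-measure axiom to a meet formula $m(x\ra y)=m(x\rs y)=m(x\wedge y)-m(x)$, from which all seven items fall out cleanly; the derivation of $(3)$ and $(4)$ via the symmetric expression $m(x)+m(y)-m(x\wedge y)$ is efficient and correct.

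Note, however, that the paper itself does not supply a proof of this proposition: it is quoted without argument as a known result from \cite{Ciu6} (the text preceding Definition~\ref{psh-m-10} says these results for bounded pseudo-BCK algebras carry over to bounded pseudo-hoops). So there is no ``paper's own proof'' to compare against; your write-up fills that gap and is self-contained within the pseudo-hoop framework set up in Section~2.
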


If $m\in \mathcal{M}(A)$, then $\Ker_0(m)=\{x\in A \mid m(x)=0\} \in {\mathcal F}_n(A)$ ($\Ker_0(m)$ is called 
the \emph{kernel} of $m$). \\

$\vspace*{5mm}$

\section{Wajsberg and involutive pseudo-hoops}

In this section we give a characterization of bounded Wajsberg pseudo-hoops and we recall some properties 
of these structures. We define the notion of involutive pseudo-hoop, we give a characterization of involutive pseudo-hoops and we investigate their properties. 
We introduce the notion of a normal pseudo-hoop and we prove that the set of all involutive elements of a 
normal pseudo-hoop $A$ is a subalgebra of $A$. 
We define the notion of an Archimedean pseudo-hoop and we prove that a pseudo-hoop is Archimedean if and only 
if it is a linearly ordered Wajsberg pseudo-hoop. 
As a consequence, any simple pseudo-hoop is a linearly ordered Wajsberg pseudo-hoop.

\begin{Def} \label{psh-inv-10} A pseudo-hoop $(A,\odot, \ra, \rs,1)$ is said to be \emph{Wajsberg} if it 
satisfies the following conditions:\\
$(W_1)$ $x\vee_1 y=y\vee_1 x;$\\
$(W_2)$ $x\vee_2 y=y\vee_2 x$. 
\end{Def}

\begin{Def} \label{psh-inv-10-10} A pseudo-hoop $(A,\odot, \ra, \rs,1)$ is said to be \emph{basic} if it 
satisfies the following conditions:\\
$(B_1)$ $(x\ra y)\ra z \leq ((y\ra x)\ra z)\ra z;$\\
$(B_2)$ $(x\rs y)\rs z \leq ((y\rs x)\rs z)\rs z$. 
\end{Def}

We recall that every 
simple basic pseudo-hoop is a linearly ordered Wajsberg pseudo-hoop 
(\cite[Cor. 4.15]{Geo16}) and every bounded Wajsberg pseudo-hoop is a bounded non-commutative R$\ell$-monoid 
$\rm($\cite[Prop. 2.14]{Ciu10}$\rm)$. \\
If $(A,\odot, \ra, \rs,1)$ is a Wajsberg pseudo-hoop, then $(A,\le)$ is a distributive lattice with: \\
$\hspace*{3cm}$ $x\vee y=x\vee_1 y=x\vee_2 y$, \\
$\hspace*{3cm}$ $x\wedge y=(x\ra y)\odot x=x\odot(x\rs y)$. 

\begin{theo} \label{psh-waj-10} Let $(A,\odot, \ra, \rs, 0, 1)$ be a pseudo-hoop.  
The following are equivalent for all $x, y\in A$: \\
$(a)$ $A$ is a Wajsberg pseudo-hoop; \\
$(b)$ $x\ra y=y\vee_1 x\ra y$ and $x\rs y=y\vee_2 x\rs y;$ \\
$(c)$ $x\vee_1 y=(x\vee_1 y)\vee_1 x$ and $x\vee_2 y =(x\vee_2 y)\vee_2 x;$ \\
$(d)$ $x\le y$ implies $y=y\vee_1 x=y\vee_2 x$.
\end{theo}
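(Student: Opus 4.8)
The plan is to establish the cycle $(a)\Rightarrow(c)\Rightarrow(d)\Rightarrow(a)$ together with the two further implications $(a)\Rightarrow(b)$ and $(b)\Rightarrow(d)$; this clearly yields the mutual equivalence of $(a)$--$(d)$. Of these five implications, four are short bookkeeping arguments built from Propositions \ref{psh-80}, \ref{psh-90} and the order axioms, while all the real work sits in the single step $(d)\Rightarrow(a)$.

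For the routine implications I would argue as follows. $(a)\Rightarrow(b)$: Wajsberg gives $y\vee_1 x=x\vee_1 y$ and $y\vee_2 x=x\vee_2 y$, so $(y\vee_1 x)\ra y=(x\vee_1 y)\ra y=x\ra y$ and $(y\vee_2 x)\rs y=(x\vee_2 y)\rs y=x\rs y$ by Proposition \ref{psh-90}$(1)$. $(a)\Rightarrow(c)$: from $(W_1)$ and $x\le x\vee_1 y$ (Proposition \ref{psh-80}$(5)$) we get $(x\vee_1 y)\vee_1 x=x\vee_1(x\vee_1 y)=x\vee_1 y$ using Proposition \ref{psh-80}$(2)$, and symmetrically for $\vee_2$. $(c)\Rightarrow(d)$: if $x\le y$ then $x\vee_1 y=y$ by Proposition \ref{psh-80}$(2)$, whence $(c)$ gives $y\vee_1 x=(x\vee_1 y)\vee_1 x=x\vee_1 y=y$, and likewise $y\vee_2 x=y$. $(b)\Rightarrow(d)$: if $x\le y$ then $x\ra y=1$, so $(b)$ forces $(y\vee_1 x)\ra y=1$, i.e. $y\vee_1 x\le y$; since also $y\le y\vee_1 x$ (Proposition \ref{psh-80}$(5)$) and $\le$ is a partial order, $y\vee_1 x=y$, and similarly $y\vee_2 x=y$.

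The crux is $(d)\Rightarrow(a)$. Fix $x,y$ and put $u=x\vee_1 y$; by Proposition \ref{psh-80}$(5)$ we have $x\le u$ and $y\le u$. Applying $(d)$ to the pair $(x,u)$, which is legitimate because $x\le u$, gives $u=u\vee_1 x=(u\ra x)\rs x$. Now $y\le u$ yields $u\ra x\le y\ra x$ by Proposition \ref{psh-20}$(7)$, and applying that proposition once more (antitonicity of $\rs$ in its first argument) we obtain $(y\ra x)\rs x\le(u\ra x)\rs x=u$; that is, $y\vee_1 x\le x\vee_1 y$. Interchanging $x$ and $y$ in this argument gives the reverse inequality, hence $x\vee_1 y=y\vee_1 x$, and running the whole argument with $\rs$, $\vee_2$ in place of $\ra$, $\vee_1$ gives $x\vee_2 y=y\vee_2 x$; thus $A$ is Wajsberg. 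I expect the only non-mechanical idea to be exactly this: feeding $(d)$ the pair $(x,\,x\vee_1 y)$ so as to rewrite $x\vee_1 y$ as $(x\vee_1 y)\vee_1 x=\bigl((x\vee_1 y)\ra x\bigr)\rs x$, after which two monotonicity steps pin $y\vee_1 x$ below $x\vee_1 y$ and symmetry finishes. (Alternatively $(b)\Rightarrow(a)$ can be proved directly: from $(b)$ one gets $(x\vee_1 y)\ra x=y\ra x$, and then $(x\vee_1 y)\ra(y\vee_1 x)=(x\vee_1 y)\ra\bigl((y\ra x)\rs x\bigr)=(y\ra x)\rs\bigl((x\vee_1 y)\ra x\bigr)=(y\ra x)\rs(y\ra x)=1$ by Proposition \ref{psh-20}$(10)$ and $(A_2)$, so $x\vee_1 y\le y\vee_1 x$, and symmetry gives equality; the $\vee_2$ case is analogous.)
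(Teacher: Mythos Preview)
Your proof is correct. The paper itself does not give an argument for this theorem; it simply writes ``Similarly as \cite[Th.~3.9]{Ciu21}'' and defers to an external reference, so there is no in-paper proof to compare against. Your cycle $(a)\Rightarrow(c)\Rightarrow(d)\Rightarrow(a)$ together with $(a)\Rightarrow(b)\Rightarrow(d)$ is a clean self-contained argument, and the key step $(d)\Rightarrow(a)$ via $u=x\vee_1 y$, $u=u\vee_1 x$, and two applications of antitonicity is exactly the right idea. The parenthetical direct proof of $(b)\Rightarrow(a)$ is also valid; note that the identity $(x\vee_1 y)\ra x=y\ra x$ you use there genuinely requires $(b)$ (it is \emph{not} available in an arbitrary pseudo-hoop, despite what Proposition~\ref{psh-90}$(2)$ might suggest---that item fails e.g.\ in Example~\ref{psh-ff-70} with $x=c$, $y=0$), so it is good that you derived it from $(b)$ rather than citing it as a general fact.
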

\begin{proof} Similarly as \cite[Th. 3.9]{Ciu21}. 
\end{proof}

\begin{prop} \label{psh-waj-10-10} Any finite Wajsberg pseudo-hoop is a Wajsberg hoop. 
\end{prop}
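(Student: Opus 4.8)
The plan is to reduce the statement to the fact that a finite pseudo-MV-algebra is commutative, the bridge being the observation that a bounded Wajsberg pseudo-hoop is involutive.

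First, a finite pseudo-hoop is automatically bounded: by Proposition~\ref{psh-20}$(2)$ the poset $(A,\le)$ is a meet-semilattice with greatest element $1$, and a finite meet-semilattice has a least element, namely the meet of all its elements. Thus $A$ is a bounded Wajsberg pseudo-hoop, hence by \cite[Prop.~2.14]{Ciu10} a bounded non-commutative R$\ell$-monoid. Next, $A$ is involutive: using the identity $x^{-\sim}=x\vee_1 0$ recalled just before Proposition~\ref{psh-80}, the axiom $(W_1)$, and the facts $0\ra x=1$ and $1\rs x=x$, we get $x^{-\sim}=x\vee_1 0=0\vee_1 x=(0\ra x)\rs x=1\rs x=x$ for all $x\in A$, and dually $x^{\sim-}=x$ from $(W_2)$. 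So $A$ is an involutive bounded non-commutative R$\ell$-monoid, i.e.\ a finite pseudo-MV-algebra.

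It now suffices to know that a finite pseudo-MV-algebra is an MV-algebra; for this I would either quote a reference, or proceed as follows. Being finite, $A$ decomposes as a finite direct product of finite linearly ordered Wajsberg pseudo-hoops (a subdirectly irreducible Wajsberg pseudo-hoop being linearly ordered, cf.~\cite[Cor.~4.15]{Geo16}); since the $\odot$-reduct of a subalgebra of a product of commutative monoids is commutative, the problem reduces to showing that a finite linearly ordered Wajsberg pseudo-hoop $C=\{c_0=0<c_1<\dots<c_n=1\}$ has commutative $\odot$. Here involutivity of $C$ is decisive: the negation $x\mapsto x^-$ is an order-reversing bijection of the finite chain, hence $c_i^-=c_{n-i}$; in particular it is an involution and the coatom $a=c_{n-1}$ satisfies $a^-=c_1$. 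One first rules out idempotents other than $0$ and $1$: if $0<e<1$ were idempotent, then $z\odot e=z\wedge e$ by Proposition~\ref{psh-20-05}$(1)$, so $z\odot e=0$ forces $z=0$ and therefore $e^-=e\ra 0=0$; but then $0\vee_1 e=(0\ra e)\rs e=1\rs e=e$ while $e\vee_1 0=(e\ra 0)\rs 0=0\rs 0=1$, contradicting $(W_1)$. Consequently the sequence $a>a^2>a^3>\dots$ strictly decreases until it reaches $0$ (if $a^k=a^{k+1}$ then $a^k$ is an idempotent below $1$, hence $a^k=0$), so $1,a,a^2,\dots,a^m=0$ are $m+1$ distinct elements of $C$, and using $a^-=c_1$ (equivalently $\{z\mid z\odot a=0\}=\{0,c_1\}$) one checks that they exhaust $C$; thus $C$ is the \L{}ukasiewicz chain $c_i\odot c_j=c_{\max(i+j-n,0)}$, which is commutative. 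Hence $\odot$ is commutative on $A$, i.e.\ $A$ is a Wajsberg hoop.

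The only non-routine point is the last one: that a finite linearly ordered Wajsberg pseudo-hoop is the \L{}ukasiewicz chain, equivalently that a finite pseudo-MV-algebra is an MV-algebra. If a reference for this is at hand it can be cited directly in place of the explicit chain analysis; otherwise the computation above localizes the difficulty to two facts established en route — the bijectivity of the negation on a finite Wajsberg chain, and the absence of nontrivial idempotents, which follows at once from $(W_1)$.
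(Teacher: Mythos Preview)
Your approach is entirely different from the paper's. The paper's proof is a single sentence: since any pseudo-hoop is a pseudo-BCK algebra (as recalled after Definition~\ref{psh-10-10}), and by \cite[Cor.~3.6]{Kuhr2} every finite pseudo-BCK algebra is a BCK-algebra, the implications $\to$ and $\rightsquigarrow$ coincide and $A$ is a hoop. No boundedness, no involutivity, no structure theory is needed.

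Your route---bounded $\Rightarrow$ involutive $\Rightarrow$ pseudo-MV $\Rightarrow$ reduce to chains $\Rightarrow$ identify the chain as \L ukasiewicz---is correct in spirit for the first two reductions, but the last two steps have real gaps. First, the decomposition of a finite Wajsberg pseudo-hoop as a \emph{direct} (not merely subdirect) product of chains is not what \cite[Cor.~4.15]{Geo16} says (that result is about simple basic pseudo-hoops), and the passage from a subdirect to a direct product for finite algebras is not automatic. Second, in the chain analysis the step ``one checks that they exhaust $C$'' is not routine: you have shown that $1,a,a^2,\dots,a^m=0$ are $m+1$ distinct elements with $a^{m-1}=c_1$, but the fact $\rho_a^{-1}(0)=\{0,c_1\}$ does not by itself imply that $\rho_a(x)=x\odot a$ is injective on $C\setminus\{0\}$, which is what you would need to force $m=n$. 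Cancellation in $\odot$ is precisely what is not yet available.

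You yourself note that citing ``finite pseudo-MV-algebras are MV-algebras'' would close the argument, and that is true; but then your proof still passes through two nontrivial external facts (involutive bounded R$\ell$-monoid $=$ pseudo-MV, and finite pseudo-MV $=$ MV), whereas the paper's route uses only one (finite pseudo-BCK $=$ BCK), applied directly without any preliminary reductions.
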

\begin{proof}
According to \cite[Cor. 3.6]{Kuhr2} any finite pseudo-BCK algebra is a BCK-algebra. 
Since a Wajsberg pseudo-hoop is a commutative pseudo-BCK algebra, it follows that it is a Wajsberg hoop.    
\end{proof}

According to \cite{Ior1}, a bounded commutative pseudo-BCK algebra $A$ is a Wajsberg pseudo-hoop where 
$x\odot y=(x\ra y^{-})^{\sim}=(y\rs x^{\sim})^{-}$ and 
$x\wedge y=(x^{-}\vee y^{-})^{\sim}=(x^{\sim}\vee y^{\sim})^{-}$. 
Hence the bounded Wajsberg pseudo-hoops are term equivalent to bounded commutative pseudo-BCK algebras. 
Based on this result, we can transfer properties of bounded commutative pseudo-BCK algebras to bounded 
Wajsberg pseudo-hoops. 

\begin{ex} \label{psh-waj-10-20} $\rm($\cite{Geo16}$\rm)$ Let $\textbf{G}=(G,\vee,\wedge,+,-,0)$ be an arbitrary $\ell$-group. For an arbitrary element $u\in G$, $u\geq 0$ define on the set $G[u]=[0,u]$ the operations:\\
$\hspace*{3cm}$ $x\odot y:=(x-u+y)\vee 0$,\\
$\hspace*{3cm}$ $x\ra y:=(y-x+u)\wedge u$,\\
$\hspace*{3cm}$ $x\rs y:=(u-x+y)\wedge u$.\\
Then $\textbf{G[u]}=(G[u],\odot,\ra, \rs, 0, u)$ is a bounded Wajsberg pseudo-hoop. 
\end{ex}

\begin{exs} \label{psh-waj-10-20-10} $\rm($\cite{Jip1}$\rm)$
$(1)$ Representable Brouwerian algebras are idempotent basic hoops. \\
$(2)$ Generalized Boolean algebras are idempotent Wajsberg hoops.
\end{exs}

\begin{prop} \label{psh-waj-10-60} If $A$ is a Wajsberg pseudo-hoop, then: \\
$\hspace*{2cm}$ $x\vee y\ra z=(x\ra z)\wedge (y\ra z)$ and $x\vee y\rs z=(x\rs z)\wedge (y\rs z)$, \\
for all $x, y, z\in A$. 
\end{prop}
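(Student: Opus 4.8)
The plan is to derive both identities from the residuation (adjunction) structure alone, using the Wajsberg hypothesis only through the fact recalled just before Definition~\ref{psh-waj-10}: in a Wajsberg pseudo-hoop $(A,\le)$ is a lattice with $x\vee y=x\vee_1 y=x\vee_2 y$, so that $x,y\le x\vee y$ and $x\vee y$ is the least upper bound of $\{x,y\}$ (the meet $(x\ra z)\wedge(y\ra z)$ exists since every pseudo-hoop is a meet-semilattice by Proposition~\ref{psh-20}$(2)$). I will prove the identity for $\ra$; the one for $\rs$ is obtained by the left--right symmetry of the axioms, interchanging $\ra$ with $\rs$ and the two adjoints of $\odot$.

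First I would establish the easy inequality $x\vee y\ra z\le (x\ra z)\wedge (y\ra z)$. From $x\le x\vee y$ and $y\le x\vee y$, Proposition~\ref{psh-20}$(7)$ gives $x\vee y\ra z\le x\ra z$ and $x\vee y\ra z\le y\ra z$, whence $x\vee y\ra z$ lies below the meet. For the reverse inequality, set $w=(x\ra z)\wedge (y\ra z)$. Then $w\le x\ra z$, so by Proposition~\ref{psh-20}$(3)$ we get $w\odot x\le z$, and a further application of Proposition~\ref{psh-20}$(3)$ yields $x\le w\rs z$; similarly $y\le w\rs z$. Since $x\vee y$ is the join of $x$ and $y$, it follows that $x\vee y\le w\rs z$, and one more use of Proposition~\ref{psh-20}$(3)$ gives $w\odot (x\vee y)\le z$, i.e. $w\le x\vee y\ra z$. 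Combining the two inequalities gives $x\vee y\ra z=(x\ra z)\wedge (y\ra z)$.

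There is essentially no real obstacle here; the only point demanding care is the bookkeeping of the two residuals in the non-commutative setting, passing back and forth among $w\le x\ra z$, $w\odot x\le z$ and $x\le w\rs z$ via Proposition~\ref{psh-20}$(3)$, so that the join $x\vee y$ is introduced on the correct side and then removed again. The identity for $\rs$ is then proved verbatim, replacing $\ra$ by $\rs$ throughout and swapping the roles of the two adjoints of $\odot$.
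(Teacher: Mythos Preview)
Your proof is correct and follows essentially the same route as the paper: both establish the easy inequality via antitonicity of $\ra$ in the first variable, and for the reverse inequality pass from $w\le x\ra z$ to $w\odot x\le z$ to $x\le w\rs z$, then use the join property to get $x\vee y\le w\rs z$ and residuate back. The only cosmetic difference is that the paper phrases the second half with an arbitrary $u\le (x\ra z)\wedge(y\ra z)$ rather than directly with $w=(x\ra z)\wedge(y\ra z)$.
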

\begin{proof}
Since $x, y\le x\vee y$, we have $x\vee y\ra z\le x\ra z$ and $x\vee y\ra z\le y\ra z$, so 
$x\vee y\ra z\le (x\ra z)\wedge (y\ra z)$. 
If $u\le (x\ra z)\wedge (y\ra z)$, then $u\le x\ra z$ and $u\le y\ra z$, so $u\odot x\le z$ and $u\odot y\le z$. 
It follows that $x, y\le u\rs z$, hence $x\vee y\le u\rs z$, that is $u\odot (x\vee y)\le z$, 
and $u\le x\vee y\ra z$. 
Thus $(x\ra z)\wedge (y\ra z)\le x\vee y\ra z$, and we conclude that $x\vee y\ra z=(x\ra z)\wedge (y\ra z)$. 
Similarly $x\vee y\rs z=(x\rs z)\wedge (y\rs z)$.
\end{proof}

\begin{Def} \label{psh-inv-10-20} A bounded pseudo-hoop is said to be an \emph{involutive} pseudo-hoop if 
$\Inv(A)=A$.
\end{Def} 

Obviously, if $A$ is involutive, then $A$ is good and $\Den(A)=\{1\}$. \\  
Taking $y=0$ in $(W_1)$ and $(W_2)$, it follows that a bounded Wajsberg pseudo-hoop is involutive.
As a consequence, every bounded Wajsberg pseudo-hoop is good. 

\begin{ex} \label{psh-waj-10-30} Let $(A,\ra,\rs,0,1)$ be a bounded pseudo-hoop and $m\in \mathcal {M}(A)$.
Then, by \cite[Prop. 4.3, Th. 4.8]{Ciu6}, $\Ker_0(m)=\{x\in A\mid m(x)=0\}\in {\mathcal F}_{n}(A)$ and 
$A/\Ker_0(m)$ is a bounded Wajsberg pseudo-hoop, that is an involutive pseudo-hoop. 
\end{ex}

\begin{ex} \label{psh-waj-10-40} Let $(A,\ra,\rs,0,1)$ be a bounded pseudo-hoop.  
A system ${\mathcal S}$ of state-measures on $A$ is an \emph{order-determing system} on $A$ if for all 
state-measures $m\in {\mathcal S}$, $m(x)\ge m(y)$ implies $x\le y$. 
If $A$ possesses an order-determing system ${\mathcal S}$ of measures then $A$ is a bounded Wajsberg pseudo-hoop. 
Indeed suppose that for $x, y\in A$ we have $x\le y$. Then, by \cite[Prop. 4.3]{Ciu6}, 
$m(y\vee_1 x)=m(y\vee_2 x)=m(y)$, for all $m\in {\mathcal S}$. 
Since ${\mathcal S}$ is order-determing then $y\vee_1 x=y\vee_2 x=y$. 
According to Theorem \ref{psh-waj-10}, $A$ is a bounded Wajsberg pseudo-hoop, thus it is an involutive pseudo-hoop. 
\end{ex}

\begin{prop} \label{psh-waj-20} Let $A$ be an involutive pseudo-hoop. Then the following hold for all $x,y \in A$: \\
$(1)$ $x \leq y$ iff $y^{-} \leq x^-$ iff $y^{\sim} \leq x^{\sim}$;\\
$(2)$ $x \ra y = y^{-} \rs x^{-}$ and $x \rs y = y^{\sim} \ra x^{\sim}$;\\
$(3)$ $x^{\sim} \ra y = y^{-} \rs x$ and $x\rs y^{-}=y\ra x^{\sim};$ \\
$(4)$ $(x \ra y^-)^{\sim}=(y \rs x^{\sim})^{-}$. 
\end{prop}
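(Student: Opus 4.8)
The plan is to derive all four items from a single basic inequality, namely Proposition \ref{psh-20-10}(6), together with the defining identities $x^{-\sim}=x^{\sim-}=x$ of an involutive pseudo-hoop. I would begin with (2). The inequalities $x\ra y\le y^-\rs x^-$ and $x\rs y\le y^\sim\ra x^\sim$ are exactly Proposition \ref{psh-20-10}(6). For the reverse inequalities I apply the same proposition to negated elements: from $x\rs y\le y^\sim\ra x^\sim$ with $x,y$ replaced by $y^-,x^-$ one gets $y^-\rs x^-\le x^{-\sim}\ra y^{-\sim}=x\ra y$, and symmetrically, from $x\ra y\le y^-\rs x^-$ with $x,y$ replaced by $y^\sim,x^\sim$ one gets $y^\sim\ra x^\sim\le x^{\sim-}\rs y^{\sim-}=x\rs y$, where in each case involutivity removes the double negation. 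Combining the two directions gives the two identities of (2).

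Item (1) is then immediate from (2): $x\le y$ iff $x\ra y=1$ iff (by the first identity of (2)) $y^-\rs x^-=1$ iff $y^-\le x^-$, and likewise $x\le y$ iff $x\rs y=1$ iff $y^\sim\ra x^\sim=1$ iff $y^\sim\le x^\sim$, using throughout that $x\ra y=1\Leftrightarrow x\rs y=1\Leftrightarrow x\le y$. Item (3) also follows from (2) by substitution: replacing $x$ by $x^\sim$ in the first identity of (2) yields $x^\sim\ra y=y^-\rs x^{\sim-}=y^-\rs x$, and replacing $y$ by $y^-$ in the second identity yields $x\rs y^-=y^{-\sim}\ra x^\sim=y\ra x^\sim$; alternatively, (3) can be read off from Proposition \ref{psh-20-10}(2) after one use of involutivity.

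For (4) I would instead invoke Proposition \ref{psh-20-10}(5). Its first identity gives $x\ra y^-=(x\odot y)^-$, hence $(x\ra y^-)^\sim=(x\odot y)^{-\sim}=x\odot y$. Its second identity, with the two variables in the appropriate slots, gives $y\rs x^\sim=(x\odot y)^\sim$, hence $(y\rs x^\sim)^-=(x\odot y)^{\sim-}=x\odot y$. So both sides of (4) equal $x\odot y$ and the identity follows.

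I do not expect any genuine obstacle: the argument is essentially bookkeeping with the two negations $^-$ and $^\sim$. The only point demanding a little care is in (4), where one must match the variable slots in Proposition \ref{psh-20-10}(5) correctly so that the product appears as $x\odot y$ (and not $y\odot x$), and, more generally, remembering that involutivity cancels any adjacent pair $^{-\sim}$ or $^{\sim-}$ but that a lone $^-$ and a lone $^\sim$ remain genuinely distinct.
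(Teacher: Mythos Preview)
Your argument is correct in every detail. The substitutions in (2) and (3) are carried out in the right order, and in (4) you have matched the slots of Proposition~\ref{psh-20-10}(5) correctly so that both sides reduce to the same product $x\odot y$.

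As for comparison: the paper does not actually give a proof of this proposition but refers the reader to \cite[Prop.~3.1]{Ciu115} with the phrase ``Similarly as\ldots''. Your write-up is therefore more explicit than what appears in the paper. The route via Proposition~\ref{psh-20-10}(6) for item (2) is perfectly fine; an even shorter alternative, which you hint at in your remark on (3), is to obtain (2) directly from Proposition~\ref{psh-20-10}(2) by replacing $y$ with $y^{-}$ (respectively $y^{\sim}$) and using involutivity, which yields equalities in one step without first establishing two inequalities. Either way the content is the same bookkeeping with the two negations that you describe.
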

\begin{proof}
Similarly as \cite[Prop. 3.1]{Ciu115}. 
\end{proof}

\begin{prop} \label{psh-waj-20-05} Let $A$ be an involutive pseudo-hoop.  
Then the following hold for all $x,y \in A$: \\
$(1)$ $(x^{-}\vee y^{-})^{\sim}=(x^{\sim}\vee y^{\sim})^{-}=x\wedge y;$ \\
$(2)$ $(x\wedge y)^{-}=x^{-}\vee y^{-}$ and $(x\wedge y)^{\sim}=x^{\sim}\vee y^{\sim};$ \\
$(3)$ $(x^{-}\wedge y^{-})^{\sim}=(x^{\sim}\wedge y^{\sim})^{-}=x\vee y$.
\end{prop}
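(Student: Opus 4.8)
The plan is to exploit the fact that, since $A$ is involutive, the two negations $^-$ and $^\sim$ are mutually inverse, order-reversing bijections of $A$. Indeed, the identities $x^{-\sim}=x^{\sim-}=x$ defining $\Inv(A)=A$ say precisely that $^-\circ{}^\sim={}^\sim\circ{}^-=\mathrm{id}_A$, so each of $^-$, $^\sim$ is a bijection and is the inverse of the other; and Proposition~\ref{psh-waj-20}$(1)$ gives $x\le y\iff y^-\le x^-\iff y^\sim\le x^\sim$, so both negations are order anti-isomorphisms of the poset $(A,\le)$.

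Next I would record the elementary order-theoretic fact that an order anti-isomorphism $f$ of $A$ carries an existing infimum to a supremum: if $z=a\wedge b$ exists, then $f(a)\vee f(b)$ exists and equals $f(z)$. The check is routine: $f(z)$ is an upper bound of $\{f(a),f(b)\}$ because $z\le a,b$; and if $v$ is any upper bound, surjectivity lets us write $v=f(w)$, whence $f(w)\ge f(a),f(b)$ forces $w\le a,b$, hence $w\le z$, hence $v=f(w)\ge f(z)$. Applying this with $f={}^\sim$ to the meet $z=x^-\wedge y^-$, which exists because $A$ is a meet-semilattice by Proposition~\ref{psh-20}$(2)$, yields $(x^-\wedge y^-)^\sim=(x^-)^\sim\vee(y^-)^\sim=x\vee y$; symmetrically, with $f={}^-$ one gets $(x^\sim\wedge y^\sim)^-=x\vee y$. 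This is part $(3)$, and it shows in passing that $A$ is a lattice. For part $(2)$, apply the same fact to the meet $z=x\wedge y$: with $f={}^-$ we obtain $(x\wedge y)^-=x^-\vee y^-$, and with $f={}^\sim$ we obtain $(x\wedge y)^\sim=x^\sim\vee y^\sim$. Finally, part $(1)$ follows by combining $(2)$ with involutivity: $(x^-\vee y^-)^\sim=\big((x\wedge y)^-\big)^\sim=(x\wedge y)^{-\sim}=x\wedge y$, and likewise $(x^\sim\vee y^\sim)^-=(x\wedge y)^{\sim-}=x\wedge y$.

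The only point that genuinely needs attention is that a pseudo-hoop is a priori merely a meet-semilattice, so the joins appearing in the statement must be produced rather than assumed; the anti-isomorphism argument does exactly that, so no external lattice-theoretic input is required, and the rest is bookkeeping. (Alternatively, parts $(1)$ and $(2)$ could be extracted more computationally from the De~Morgan laws $x\ra y=y^-\rs x^-$ and $x\rs y=y^\sim\ra x^\sim$ of Proposition~\ref{psh-waj-20}$(2)$ together with $x\wedge y=(x\ra y)\odot x$, but the bijection argument is shorter and delivers $(3)$ at the same time.)
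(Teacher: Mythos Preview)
Your argument is correct. The paper does not actually supply a proof here but merely cites \cite{Ciu115}; your self-contained order-theoretic route---observing that involutivity makes $^-$ and $^\sim$ mutually inverse order anti-isomorphisms of $(A,\le)$ (via Proposition~\ref{psh-waj-20}$(1)$), and that any such map sends an existing infimum to a supremum---is a clean and efficient way to obtain all three parts at once. In particular, because $A$ is only guaranteed to be a meet-semilattice (Proposition~\ref{psh-20}$(2)$), the joins appearing in the statement are not given a priori; your argument manufactures them from meets via the anti-isomorphism, so the lattice structure of an involutive pseudo-hoop drops out as a byproduct. The alternative computational route you sketch at the end (through Proposition~\ref{psh-waj-20}$(2)$ and the explicit formula for $\wedge$) is closer in spirit to how such identities are typically verified in the literature on pseudo-BCK algebras, but your bijection argument is shorter and handles the existence of $\vee$ more transparently.
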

\begin{proof}
Similarly as \cite[Prop. 3.2, Prop. 3.3, Cor. 3.1]{Ciu115}. 
\end{proof}

\begin{theo} \label{psh-waj-30} $\rm($\cite[Th. 3.1]{Ciu115}$\rm)$ Every bounded locally finite pseudo-hoop 
is an involutive pseudo-hoop. 
\end{theo}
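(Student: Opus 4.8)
The goal is to show that if $A$ is a bounded locally finite pseudo-hoop, then $x^{-\sim}=x^{\sim-}=x$ for every $x\in A$; since $x\le x^{-\sim}$ and $x\le x^{\sim-}$ always hold by Proposition \ref{psh-20-10}$(1)$, it suffices to prove the reverse inequalities $x^{-\sim}\le x$ and $x^{\sim-}\le x$. The plan is to fix $x\in A$ and consider the element $a=x^{-\sim}\ra x$ (and symmetrically $b=x^{\sim-}\rs x$). Because $x\le x^{-\sim}$ we have $x^{-\sim}\ra x\ge 1$ is false in general; rather the idea is to use local finiteness on $a$: either $a=1$, in which case $x^{-\sim}\le x$ and we are done, or $ord(a)<\infty$, i.e. $a^n=0$ for some $n\ge 1$, and we must derive a contradiction.

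First I would record the key negation identities available in a bounded pseudo-hoop: from Proposition \ref{psh-20-10}$(5)$ we have $x\ra y^- = (x\odot y)^-$, and from Proposition \ref{psh-20-10}$(2)$ we have the adjunction-type relation $x\ra y^\sim = y\rs x^-$. The crucial computation is to evaluate $(x^{-\sim}\ra x)^\sim$. Using $x^\sim = (x^\sim)$ and the fact that $x^{-\sim-}=x^-$ (Proposition \ref{psh-20-10}$(3)$), together with Proposition \ref{psh-20-10}$(2)$, one gets $x^{-\sim}\ra x \le x^{-\sim}\ra x^{-\sim}$... more to the point, I would show that $(x^{-\sim}\ra x)^{-}$ or $(x^{-\sim}\ra x)^\sim$ equals $0$, i.e. that $a$ is a dense element. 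Indeed, $a^\sim = (x^{-\sim}\ra x)^\sim$; since $x\le x^{-\sim}$, Proposition \ref{psh-20}$(7)$ gives $x^{-\sim}\ra x^{-\sim-} \le x^{-\sim}\ra x$... I would instead argue directly: by Proposition \ref{psh-20-10}$(4)$, $x^{-\sim}\ra x^{-\sim} = 1$, and one massages $(x^{-\sim}\ra x)$ to show its negation is $0$, so that $a\in \Den(A)$.

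Once $a\in\Den(A)$, i.e. $a^{-}=a^{\sim-}$ related quantities are $1$ (more precisely $a^\sim=0$ or the appropriate one-sided density), local finiteness forces the conclusion: if $a\ne 1$ then $a^n=0$ for some $n$, but a dense element $a$ satisfies $a^-\ne 0$ or cannot be nilpotent — specifically, if $a^n=0$ then $a\le 0^{\,?}$... the clean way is: $a^n=0$ means $a^{n-1}\le a\ra 0 = a^-$; iterating, $1=a^0 \le$ something forces $a^- = $ large, contradicting density unless $a=1$. So the main obstacle is the bookkeeping in step two: pinning down exactly which one-sided negation of $a=x^{-\sim}\ra x$ vanishes, using Proposition \ref{psh-20-10}$(2)$–$(5)$ correctly in the non-commutative setting (the asymmetry between $^-$ and $^\sim$ is easy to get backwards). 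After that, deducing $x^{-\sim}\le x$ from $a=1$ is immediate, the symmetric argument with $b=x^{\sim-}\rs x$ gives $x^{\sim-}\le x$, and combining with Proposition \ref{psh-20-10}$(1)$ yields $x^{-\sim}=x^{\sim-}=x$, i.e. $A$ is involutive. I would also remark that this simultaneously shows $\Den(A)=\{1\}$ for such $A$, consistent with the observation after Definition \ref{psh-inv-10-20}.
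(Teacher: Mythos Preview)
The paper does not supply a proof of this theorem; it simply cites \cite[Th.~3.1]{Ciu115}, so there is nothing in the paper to compare against directly. Let me assess your plan on its own merits.

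Your overall strategy---set $a = x^{-\sim}\ra x$, suppose $a\ne 1$, use local finiteness to get $a^n=0$, and derive a contradiction---is the right one. The gap is that you never actually establish the ``density'' claim $a^\sim = 0$. You gesture at Proposition~\ref{psh-20-10}(2)--(5) but do not carry out the computation, and the only place in the paper where $(x^{-\sim}\ra x)^\sim = 0$ is recorded is Proposition~\ref{psh-30}(1), which is stated for \emph{good} pseudo-hoops; goodness is a consequence of involutivity and so is not available to you here. Your contradiction step ``$a^{n-1}\le a^-$, iterating, $1=a^0 \le$ something'' is likewise too vague to stand on its own.

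Here is a clean way to close the argument that bypasses the density claim. From $x\le x^{-\sim}$ and pseudo-divisibility (Proposition~\ref{psh-20}(2)) you get
\[
a\odot x^{-\sim} = (x^{-\sim}\ra x)\odot x^{-\sim} = x^{-\sim}\wedge x = x,
\]
and similarly $x^{-}\odot x^{-\sim} = x^-\odot(x^-\rs 0) = x^-\wedge 0 = 0$. Now assume $a^n = 0$. Then
\[
a^{n-1}\odot x = a^{n-1}\odot(a\odot x^{-\sim}) = a^n\odot x^{-\sim} = 0,
\]
so $a^{n-1}\le x\ra 0 = x^-$, whence $a^{n-1}\odot x^{-\sim}\le x^-\odot x^{-\sim}=0$ by Proposition~\ref{psh-20}(13), giving $a^{n-2}\odot x = 0$. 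Descending inductively you reach $a^0\odot x = x = 0$, hence $x^{-\sim}=0^{-\sim}=0=x$, contradicting $a\ne 1$. The symmetric argument with $b = x^{\sim-}\rs x$, using $x^{\sim-}\odot b = x$ and $x^{\sim-}\odot x^\sim = 0$, handles $x^{\sim-}=x$.
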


\begin{theo} \label{psh-waj-40} Let $(A,\odot, \ra, \rs, 0, 1)$ be a bounded pseudo-hoop.  
The following are equivalent: \\
$(a)$ $A$ is an involutive pseudo-hoop;\\
$(b)$ $x \ra y = y^{-} \rs x^{-}$ and $x\rs y = y^{\sim} \ra x^{\sim};$ \\
$(c)$ $x^{\sim} \ra y = y^{-} \rs x$ and $x^{-} \rs y = y^{\sim} \ra x;$ \\
$(d)$ $x^{-} \leq y$ implies $y^{\sim} \leq x$ and $x^{\sim} \leq y$ implies $y^{-} \leq x$.
\end{theo}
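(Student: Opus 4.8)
The plan is to prove $(a)\Leftrightarrow(b)$ together with the cycle $(a)\Rightarrow(c)\Rightarrow(d)\Rightarrow(a)$, which gives the equivalence of all four conditions. The implication $(a)\Rightarrow(b)$ requires nothing: it is exactly Proposition~\ref{psh-waj-20}$(2)$.

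For $(b)\Rightarrow(a)$ I would use $(b)$ to produce the two missing inequalities $x^{-\sim}\le x$ and $x^{\sim-}\le x$, the reverse ones being supplied for free by Proposition~\ref{psh-20-10}$(1)$. Applying the first identity of $(b)$ to the pair $(x^{-\sim},x)$ yields $x^{-\sim}\ra x=x^-\rs x^{-\sim-}$, and since $x^{-\sim-}=x^-$ by Proposition~\ref{psh-20-10}$(3)$ and $x^-\rs x^-=1$ by $(A_2)$, we get $x^{-\sim}\ra x=1$, i.e. $x^{-\sim}\le x$; applying the second identity of $(b)$ to $(x^{\sim-},x)$ and using $x^{\sim-\sim}=x^\sim$ gives symmetrically $x^{\sim-}\le x$. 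Hence $\Inv(A)=A$.

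For $(a)\Rightarrow(c)$, the first identity $x^\sim\ra y=y^-\rs x$ is Proposition~\ref{psh-waj-20}$(3)$, and the second comes from $x\rs y=y^\sim\ra x^\sim$ (Proposition~\ref{psh-waj-20}$(2)$) by replacing $x$ with $x^-$ and using $x^{-\sim}=x$, valid since $A$ is involutive. For $(c)\Rightarrow(d)$: $x^-\le y$ means $x^-\rs y=1$, so the second identity of $(c)$ gives $y^\sim\ra x=1$, i.e. $y^\sim\le x$; dually $x^\sim\le y$ forces $y^-\rs x=1$ via the first identity of $(c)$, i.e. $y^-\le x$. Finally, for $(d)\Rightarrow(a)$ I would instantiate the first clause of $(d)$ at $y=x^-$ (using $x^-\le x^-$) to get $x^{-\sim}\le x$, and the second clause at $y=x^\sim$ to get $x^{\sim-}\le x$; together with Proposition~\ref{psh-20-10}$(1)$ this gives $\Inv(A)=A$.

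I do not expect any step to be a real obstacle: the argument is a bookkeeping exercise in the interaction of the two negations $^-$, $^\sim$ with the left and right residuals, and the only genuine computation is the residuation identity $x^{-\sim}\ra x=1$ (and its mirror $x^{\sim-}\rs x=1$), which drives both $(b)\Rightarrow(a)$ and $(d)\Rightarrow(a)$. The one point demanding attention is to keep $x^-$ and $x^\sim$ — and $\ra$ versus $\rs$ — strictly apart, since commutativity is not assumed.
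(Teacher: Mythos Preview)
Your proof is correct and complete: each implication is verified by the natural substitution and an appeal to the basic negation identities of Proposition~\ref{psh-20-10}. The paper itself does not give a proof but simply refers to \cite[Th.~3.2]{Ciu115}; your argument is exactly the standard one and is presumably what that reference contains.
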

\begin{proof} Similarly as \cite[Th. 3.2]{Ciu115}. 
\end{proof}

\begin{Def} \label{psh-waj-70} A bounded pseudo-hoop $A$ is called \emph{normal} if 
$(x\odot y)^{-\sim}=x^{-\sim}\odot y^{-\sim}$ and $(x\odot y)^{\sim-}=x^{\sim-}\odot y^{\sim-}$, for all $x, y\in A$. 
\end{Def} 

\begin{exs} \label{psh-waj-80} $(1)$ Any bounded idempotent pseudo-hoop is normal. \\
$(2)$ Every good pseudo-BL algebra is normal (\cite[Prop. 2.2]{Rac5}).
\end{exs}

\begin{prop} \label{psh-waj-100} If $A$ is a normal Wajsberg pseudo-hoop, then $\Inv(A)$ is a subalgebra of $A$. 
\end{prop}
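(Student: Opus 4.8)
The plan is to show that $\Inv(A)$ is closed under $\odot$, $\ra$, $\rs$ (and contains $0,1$, which is clear since $0^{-\sim}=0^{\sim-}=0$ in a good pseudo-hoop and $1^{-\sim}=1$). Since $A$ is Wajsberg it is good (as noted after Definition \ref{psh-inv-10-20}), so the Glivenko property (Corollary \ref{psh-30-10}, Remark \ref{psh-30-20}) and all of Proposition \ref{psh-30} are available. The key observation is that for any $x\in A$ the element $x^{-\sim}$ is always involutive: indeed by Proposition \ref{psh-20-10}$(3)$ we have $x^{-\sim-}=x^-$ and hence $x^{-\sim-\sim}=x^{-\sim}$, and symmetrically (using that $A$ is good so $x^{-\sim}=x^{\sim-}$) we get $x^{-\sim\sim-}=x^{-\sim}$; thus $x^{-\sim}\in\Inv(A)$. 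So it suffices to verify that the three operations applied to involutive elements land back among elements of the form $z^{-\sim}$, or directly that the result is fixed by $^{-\sim}$ and $^{\sim-}$.

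For the implication $\ra$, $\rs$: let $x,y\in\Inv(A)$. By Corollary \ref{psh-30-10}, $(x\ra y)^{-\sim}=x\ra y^{-\sim}=x\ra y=x\ra y$ since $y$ is involutive, and similarly using the good property $(x\ra y)^{\sim-}=(x\ra y)^{-\sim}=x\ra y$; hence $x\ra y\in\Inv(A)$, and symmetrically $x\rs y\in\Inv(A)$. Note this part does not even use normality or the Wajsberg hypothesis — it holds in any good pseudo-hoop. (This also shows $x^-=x\ra 0$ and $x^\sim=x\rs 0$ are in $\Inv(A)$ whenever $x$ is, which is anyway immediate from the definition of $\Inv(A)$.)

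For closure under $\odot$ is where normality enters, and this is the main obstacle. Let $x,y\in\Inv(A)$. By Definition \ref{psh-waj-70}, $(x\odot y)^{-\sim}=x^{-\sim}\odot y^{-\sim}=x\odot y$ and $(x\odot y)^{\sim-}=x^{\sim-}\odot y^{\sim-}=x\odot y$, so $x\odot y\in\Inv(A)$ directly. Thus the Wajsberg hypothesis is actually only needed to guarantee $A$ is good (so that the Glivenko machinery and the matching of $^{-\sim}$ with $^{\sim-}$ are available for the $\ra,\rs$ part); normality alone handles $\odot$. I would therefore organize the write-up as: (i) recall $A$ good; (ii) closure under $\ra,\rs$ via Corollary \ref{psh-30-10}; (iii) closure under $\odot$ via Definition \ref{psh-waj-70}; (iv) conclude $\Inv(A)$ with the inherited operations is a subalgebra. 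The only subtlety to watch is making sure, in step (ii), that one genuinely gets both $(x\ra y)^{-\sim}=x\ra y$ and $(x\ra y)^{\sim-}=x\ra y$ — the second follows from the first because $A$ is good, so $(x\ra y)^{\sim-}=(x\ra y)^{-\sim}$.
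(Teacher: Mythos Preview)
Your proposal is correct and follows essentially the same route as the paper: closure under $\ra,\rs$ comes from the good/Glivenko machinery (the paper cites Proposition~\ref{psh-30}(2) directly, you use the equivalent Corollary~\ref{psh-30-10}), and closure under $\odot$ comes straight from the normality hypothesis. The paper also checks $\wedge$ and $\vee$ explicitly, but since these are term-definable from $\odot,\ra,\rs$ in a (Wajsberg) pseudo-hoop, your omission of them is harmless; your side remark that the Wajsberg hypothesis enters only to guarantee goodness is accurate and matches how the paper's proof actually uses it.
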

\begin{proof} 
Let $A$ be a normal Wajsberg pseudo-hoop and $x, y\in \Inv(A)$. \\
By Proposition \ref{psh-30}, $x\wedge y, x\ra y, x\rs y\in \Inv(A)$ and \\
$\hspace*{2cm}$
$(x\vee y)^{-\sim}=((x\ra y)\rs y)^{-\sim}=(x\ra y)^{-\sim}\rs y^{-\sim}$ \\
$\hspace*{3.8cm}$
$=(x^{-\sim}\ra y^{-\sim})\rs y^{-\sim}=(x\ra y)\rs y=x\vee y$, \\
hence $x\vee y\in \Inv(A)$. 
Since $A$ is normal, $(x\odot y)^{-\sim}=x^{-\sim}\odot y^{-\sim}=x\odot y$, thus $x\odot y\in \Inv(A)$. 
It follows that $\Inv(A)$ is a subalgebra of $A$. 
\end{proof}

\begin{Def} \label{psh-waj-100-10} A pseudo-hoop $(A,\odot, \ra, \rs,1)$ is said to be \emph{Archimedean} if it 
satisfies $(A)$ condition for all $x, y\in A$: \\
$(A)$ $\hspace*{0.5cm}$ $(y\ra x=x$ implies $x=1$ and $y=1)$ or $(y\rs x=x$ implies $x=1$ or $y=1)$. \\
\end{Def}

\begin{rem} \label{psh-waj-100-20} 
$(1)$ According to Proposition \ref{psh-waj-110}, any simple pseudo-hoop is Archimedean. \\
$(2)$ It was proved in \cite{Geo16} that a basic Archimedean pseudo-hoop is a linearly ordered Wajsberg pseudo-hoop. 
\end{rem}

In what follows we extend to pseudo-hoops a result proved in \cite{Blok1} for the case of hoops. 

\begin{theo} \label{psh-waj-120} A pseudo-hoop is Archimedean if and only if it is a linearly ordered Wajsberg pseudo-hoop. 
\end{theo}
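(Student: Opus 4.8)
The plan is to establish both implications separately. For the easy direction, suppose $A$ is a linearly ordered Wajsberg pseudo-hoop. Since $A$ is linearly ordered, for any $x,y\in A$ either $x\le y$ or $y\le x$. Using Theorem~\ref{psh-waj-10}$(d)$, $x\le y$ forces $y=y\vee_1 x=y\vee_2 x$, which combined with the definition of $\vee_1,\vee_2$ controls the behaviour of the implications. I would then verify the Archimedean condition $(A)$ directly: assuming $y\ra x=x$ (resp. $y\rs x=x$) with $y\ne 1$, I would iterate to get $y\ra^n x=x$ for all $n$, and use linearity together with the Wajsberg identities to derive that $x$ must be the top element, exactly as in the classical hoop argument of \cite{Blok1}. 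The linearity is what makes the iteration collapse; without it one only gets Archimedean-ness modulo the prime filters.

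For the harder direction, assume $A$ is Archimedean. The key structural input should be Remark~\ref{psh-waj-100-20}$(2)$: a \emph{basic} Archimedean pseudo-hoop is a linearly ordered Wajsberg pseudo-hoop. So the crux is to show that an arbitrary Archimedean pseudo-hoop is automatically basic (and linearly ordered). I would argue by first showing linearity: given $x,y$ with $x\not\le y$ and $y\not\le x$, one considers the elements $x\ra y$ and $y\ra x$ (both $\ne 1$) and applies condition $(A)$ to a suitably chosen pair — for instance showing that $(x\ra y)\ra y$ fixes some element under $\ra$, forcing a contradiction with $x\not\le y$. Once linearity is in hand, the basic identities $(B_1),(B_2)$ follow because in a linearly ordered pseudo-hoop one of $x\ra y$, $y\ra x$ equals $1$, making the inequalities $(x\ra y)\ra z\le ((y\ra x)\ra z)\ra z$ trivial to check case-by-case.

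The main obstacle I anticipate is the proof of linearity from the Archimedean condition alone. The condition $(A)$ is a disjunction of two conditional statements, which is awkward to use; one has to manufacture, from a hypothetical pair of incomparable elements, an element $x\ne 1$ and $y\ne 1$ with $y\ra x=x$ or $y\rs x=x$. The natural candidates are built from $x\vee_1 y$, $x\vee_2 y$ and the two implications; Proposition~\ref{psh-waj-110}$(3),(4)$ (the cancellation-of-fixed-points property for simple pseudo-hoops) suggests the right shape of argument, and Proposition~\ref{psh-20-05-10}$(2)$ — the identity $(x\ra y)\rs(y\ra x)=y\ra x$ — looks like the precise tool for producing a genuine fixed point of an implication. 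I would lean on that identity: if $x\not\le y$ and $y\not\le x$, then $y\ra x\ne 1$ and $y\ra x$ is fixed by $(x\ra y)\rs(-)$, so if also $x\ra y\ne 1$ one gets a violation of $(A)$ unless $y\ra x = 1$, a contradiction; this should close the linearity argument. After that, reducing to Remark~\ref{psh-waj-100-20}$(2)$ is routine.
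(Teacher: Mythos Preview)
Your proposal is correct, and the linearity step matches the paper exactly: both apply Proposition~\ref{psh-20-05-10}$(2)$, read $(x\ra y)\rs(y\ra x)=y\ra x$ as a fixed point of an implication, and invoke the Archimedean condition to force $x\ra y=1$ or $y\ra x=1$.

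Where you diverge is in deducing the Wajsberg property once linearity is in hand. You route through the basic axioms $(B_1),(B_2)$ --- verifying them by the case split $x\le y$ versus $y\le x$ --- and then appeal to the result cited in Remark~\ref{psh-waj-100-20}$(2)$ (basic $+$ Archimedean $\Rightarrow$ linearly ordered Wajsberg). The paper instead stays self-contained: it manufactures a \emph{second} fixed point, computing directly (via Proposition~\ref{psh-90}, $(A_3)$, $(A_5)$) that $(y\vee_1 x\ra y)\ra(y\ra x)=y\ra x$, applies the Archimedean hypothesis again to conclude $y\vee_1 x=y$ whenever $x<y$, and then invokes the internal characterisation Theorem~\ref{psh-waj-10}$(d)$. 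Your route is shorter but imports \cite{Geo16}; the paper's route is longer but does not leave the present toolkit.

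For the converse direction, the iteration $y\ra^n x=x$ you propose is a detour: the paper's argument is a one-line case split. If $y\ra x=x$ and $x\le y$, then $(W_1)$ gives $y=x\vee_1 y=y\vee_1 x=(y\ra x)\rs x=x\rs x=1$; if $y\le x$ then $x=y\ra x=1$. No iteration is needed, and in particular no appeal to simplicity or local finiteness (which your sketch tacitly gestures toward) is required.
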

\begin{proof} 
Let $A$ be a pseudo-hoop such that for all $x, y\in A$, $y\ra x=x$ implies $x=1$ or $y=1$. \\
According to Proposition \ref{psh-20-05-10}$(2)$, for all $x, y\in A$ we have $(x\ra y)\rs (y\ra x)=y\ra x$.  
Hence by hypothesis, $x\ra y=1$ or $y\ra x=1$. It follows that $x\le y$ or $y\le x$, thus $A$ is linearly ordered. 
Let $x, y\in A$ and assume without loss of generality that $x<y$.  
Applying Proposition \ref{psh-90}, $(A_3)$ and $(A_5)$ we get: \\ 
$\hspace*{1cm}$ $(y\vee_1 x\ra y)\ra (y\ra x)=(y\vee_1 x\ra y)\ra (y\vee_1 x\ra x)$ \\
$\hspace*{5.1cm}$ $=(y\vee_1 x\ra y)\odot (y\vee_1 x)\ra x$ \\
$\hspace*{5.1cm}$ $=(y\ra y\vee_1 x)\odot y\ra x$ \\
$\hspace*{5.1cm}$ $=(y\ra y\vee_1 x)\ra (y\ra x)$ \\
$\hspace*{5.1cm}$ $=1\ra (y\ra x)=y\ra x$. \\
Similarly, by Proposition \ref{psh-90}, $(A_4)$ and $(A_5)$ we have: \\ 
$\hspace*{1cm}$ $(y\vee_2 x\rs y)\rs (y\rs x)=(y\vee_2 x\rs y)\rs (y\vee_2 x\rs x)$ \\
$\hspace*{5.1cm}$ $=(y\vee_2 x)\odot (y\vee_2 x\rs y)\rs x$ \\
$\hspace*{5.1cm}$ $=y\odot (y\rs y\vee_2 x)\rs x$ \\
$\hspace*{5.1cm}$ $=(y\rs y\vee_2 x)\rs (y\rs x)$ \\
$\hspace*{5.1cm}$ $=1\rs (y\rs x)=y\rs x$. \\
It follows that either $y\ra x=1$ or $y\vee_1 x\ra y=1$. Since by assumption $x<y$, we get $y\vee_1 x\ra y=1$,  
that is $y\vee_1 x\le y$. On the other hand $y\le y\vee_1 x$, thus $y\vee_1 x=y$. \\
Similarly, from $(y\vee_2 x\rs y)\rs (y\rs x)=y\rs x$ we get $y\vee_2 x=y$. \\
Applying Theorem \ref{psh-waj-10} it follows that $A$ is a Wajsberg pseudo-hoop. \\
Conversely, let $A$ be a linearly ordered Wajsberg pseudo-hoop and $x, y\in A$ such that $y\ra x=x$.  
If $x\le y$, then from (W1) we get $y=x\rs x=1$. 
If $y\le x$, then obviously $x=1$. \\
Similarly from $y\rs x=x$, it follows that $x=1$ or $y=1$. \\
Hence condition $(A)$ is satisfied, that is $A$ is an Archimedean pseudo-hoop. 
\end{proof}

\begin{cor} \label{psh-waj-130} Any simple pseudo-hoop is a linearly ordered Wajsberg pseudo-hoop. 
\end{cor}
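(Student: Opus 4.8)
The plan is to deduce this immediately from the two facts already assembled in this section: that simplicity forces the Archimedean property, and that Archimedean pseudo-hoops are precisely the linearly ordered Wajsberg ones. So the first step is to observe that if $A$ is simple, then by Proposition \ref{psh-waj-110}$(3)$ we have, for all $x,y\in A$, that $y\ra x=x$ implies $x=1$ or $y=1$; likewise Proposition \ref{psh-waj-110}$(4)$ gives that $y\rs x=x$ implies $x=1$ or $y=1$. Hence both disjuncts of condition $(A)$ in Definition \ref{psh-waj-100-10} are satisfied, and in particular $A$ is an Archimedean pseudo-hoop (this is exactly Remark \ref{psh-waj-100-20}$(1)$).

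Having established that, the second and final step is to invoke Theorem \ref{psh-waj-120}, which asserts that a pseudo-hoop is Archimedean if and only if it is a linearly ordered Wajsberg pseudo-hoop. Applying the ``only if'' direction to $A$ yields that $A$ is a linearly ordered Wajsberg pseudo-hoop, which is the claim.

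There is essentially no obstacle here: the corollary is a direct specialization of Theorem \ref{psh-waj-120} through the bridge provided by Proposition \ref{psh-waj-110}. The only thing to be careful about is that Definition \ref{psh-waj-100-10} phrases $(A)$ as a disjunction of two conditional statements, so it suffices that \emph{either} the $\ra$-clause \emph{or} the $\rs$-clause holds for $A$; since simplicity delivers \emph{both}, the hypothesis of Theorem \ref{psh-waj-120} is comfortably met. One could alternatively cite Remark \ref{psh-waj-100-20}$(1)$ directly for the Archimedean property and skip the explicit appeal to Proposition \ref{psh-waj-110}, but recording which parts of that proposition are used makes the argument self-contained.
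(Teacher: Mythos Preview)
Your proposal is correct and follows exactly the paper's own route: the paper's proof simply cites Remark~\ref{psh-waj-100-20}(1) (which in turn rests on Proposition~\ref{psh-waj-110}) to get that a simple pseudo-hoop is Archimedean, and then invokes Theorem~\ref{psh-waj-120}. Your version merely unpacks the reference to the remark into its underlying proposition, which is fine and makes the argument more explicit.
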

\begin{proof} It follows by Remark \ref{psh-waj-100-20} and Theorem \ref{psh-waj-120}.
\end{proof}

\begin{ex} \label{psh-waj-10-40-10} $\rm($\cite[Ex. 1.6]{Blok1}$\rm)$ 
Let $a\in (0, 1)$ and $C_m=\{a^m, a^{m-1}, \cdots, a, a^0=1\}$ with $m\in {\mathbb N}$. Define the operations: 
$a^k\odot a^n=a^{\min(k+n,m)}$, $a^k\ra a^n=a^{\max(n-k,0)}$. 
Then $(C_m,\odot,\ra,a^m,1)$ is a bounded linearly ordered Wajsberg hoop. 
Indeed, the equality $(a^k\ra a^n)\ra a^n=(a^n\ra a^k)\ra a^k$ is equivalent to 
$a^{\max(n-\max(n-k,0),0)}=a^{\max(k-\max(k-n,0),0)}$. If we consider the cases $n\le k$ and $n>k$, it is easy 
to see that the equality $\max(n-\max(n-k,0),0)=\max(k-\max(k-n,0),0)$ is verified for any $0\le k, n\le m$. 
$C_m$ is a simple Wajsberg hoop (\cite[Ex. 2.4]{Blok1}). A simple computation shows that $C_m$ is an Archimedean 
hoop.  
\end{ex}

$\vspace*{5mm}$

\section{Involutive filters of pseudo-hoops}

In this section we define the involutive filters of a bounded pseudo-hoop and we investigate their properties. 
If $\Den(A)$ is the set of all dense elements of a good pseudo-hoop $A$, we show that $\Den(A)$ is an 
involutive filter of $A$, and any filter of $A$ containing $\Den(A)$ is an involutive filter. 
One of main results consists of proving that a normal filter $F$ of a bounded pseudo-hoop $A$ is involutive if and 
only if $A/F$ is an involutive pseudo-hoop. 
We introduce the notion of a fantastic filter of a pseudo-hoop $A$ and we prove that a normal filter of $A$ 
is fantastic if and only if $A/F$ is a Wajsberg pseudo-hoop. 
We give characterizations of involutive filters of a bounded pseudo-hoop and we prove that in the case of 
bounded Wajsberg pseudo-hoops the notions of fantastic and involutive filters coincide. 
It is also proved that any Boolean filter of a bounded Wajsberg pseudo-hoop is involutive. 

\begin{Def} \label{psh-if-10} If $F\in {\mathcal F}(A)$, then $F$ is said to be an \emph{involutive} filter of $A$ 
if $x^{-\sim} \ra x, x^{\sim-}\rs x\in F$, for all $x\in A$. 
We will denote by ${\mathcal F}_i(A)$ the set of all involutive filters of $A$.  
\end{Def}

Obviously, if $A$ is an involutive pseudo-hoop, then ${\mathcal F}_i(A)={\mathcal F}(A)$. 
In particular, if $A$ is a bounded Wajsberg pseudo-hoop, then ${\mathcal F}_i(A)={\mathcal F}(A)$.

\begin{ex} \label{psh-if-60} If $s\in \mathcal{BS}(A)$, then $\Ker(s)\in {\mathcal F}_i(A)$. \\
Indeed, let $x\in A$. By Proposition \ref{psh-s-20} we have $s(x^{-\sim})=s(x)$. 
Since $x\le x^{-\sim}$, we get $s(x\ra x^{-\sim})=s(1)=1$. 
Applying $(bs_2)$ we get $s(x^{-\sim}\ra x)=1$, that is $x^{-\sim}\ra x\in \Ker(s)$. 
Similarly $x^{\sim-}\rs x\in \Ker(s)$, hence $\Ker(s)\in {\mathcal F}_i(A)$. 
\end{ex}

\begin{ex} \label{psh-if-70} If $m\in \mathcal{M}(A)$, then $\Ker_0(m)\in {\mathcal F}_i(A)$. \\
Indeed, let $x\in A$. Since $x\le x^{-\sim}$, applying Proposition \ref{psh-m-20} we get 
$m(x^{-\sim}\ra x)=m(x)-m(x^{-\sim})=0$ and $m(x^{\sim-}\rs x)=m(x)-m(x^{\sim-})=0$. 
It follows that $x^{-\sim}\ra x, x^{\sim-}\rs x\in \Ker_0(m)$, that is $\Ker_0(m)\in {\mathcal F}_i(A)$.   
\end{ex}

\begin{prop} \label{psh-if-20} If $A$ is a good pseudo-hoop, then $\Den(A)\in {\mathcal F}_n(A)\cap {\mathcal F}_i(A)$.
\end{prop}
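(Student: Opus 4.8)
The plan is to show separately that $\Den(A)$ is a normal filter and that it is an involutive filter, relying on the Glivenko property of good pseudo-hoops (Corollary~\ref{psh-30-10} and Remark~\ref{psh-30-20}).

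First I would verify that $\Den(A)\in {\mathcal F}(A)$. It is nonempty since $1^{-\sim}=1^{\sim-}=1$, so $1\in \Den(A)$. For $(F_1)$, take $x,y\in \Den(A)$; using Lemma~\ref{psh-40} we have $(x\odot y)^{-\sim}\ge x^{-\sim}\odot y^{-\sim}=1\odot 1=1$, hence $(x\odot y)^{-\sim}=1$, and symmetrically $(x\odot y)^{\sim-}=1$, so $x\odot y\in \Den(A)$. For $(F_2)$, suppose $x\in \Den(A)$ and $x\le y$; by Proposition~\ref{psh-30}$(2)$ applied to the map $x\mapsto x^{-\sim}$ (which is order-preserving, being a composition of two order-reversing negations via Proposition~\ref{psh-20-10}$(1)$ and $(6)$), we get $1=x^{-\sim}\le y^{-\sim}\le 1$, so $y^{-\sim}=1$ and likewise $y^{\sim-}=1$, giving $y\in \Den(A)$.

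Next I would check normality: I must show $x\ra y\in \Den(A)$ iff $x\rs y\in \Den(A)$ for all $x,y\in A$. By Corollary~\ref{psh-30-10}, $(x\ra y)^{-\sim}=x\ra y^{-\sim}$ and $(x\rs y)^{-\sim}=x\rs y^{-\sim}$. So $x\ra y\in \Den(A)$ means $x\ra y^{-\sim}=1$, i.e.\ $x\le y^{-\sim}$; similarly $x\rs y\in \Den(A)$ means $x\le y^{-\sim}$ (using $x\rs y^{-\sim}=1$ iff $x\le y^{-\sim}$). Since the condition $x\le y^{-\sim}$ is the same in both cases (and one also needs the $\sim$-$-$ versions, handled symmetrically using $y^{\sim-}$), the two filter-membership conditions are equivalent, so $\Den(A)\in {\mathcal F}_n(A)$.

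Finally, for involutivity I must show $x^{-\sim}\ra x\in \Den(A)$ and $x^{\sim-}\rs x\in \Den(A)$ for all $x\in A$. Using Corollary~\ref{psh-30-10} again, $(x^{-\sim}\ra x)^{-\sim}=x^{-\sim}\ra x^{-\sim}=1$ by $(A_2)$, and by Proposition~\ref{psh-20-10}$(3)$ one has $x^{-\sim-\sim}=x^{-\sim}$, so actually the argument is clean: $(x^{-\sim}\ra x)^{-\sim}=x^{-\sim}\ra x^{-\sim}=1$. The $\sim-$ version: $(x^{\sim-}\rs x)^{-\sim}$ — here I would instead use the companion identity $(x\rs y)^{\sim-}=x\rs y^{\sim-}$ (the $\sim-$ analogue of Corollary~\ref{psh-30-10}, which holds by the symmetric argument, Propositions~\ref{psh-30}$(2)$ and \ref{psh-20-10}$(4)$) to get $(x^{\sim-}\rs x)^{\sim-}=x^{\sim-}\rs x^{\sim-}=1$, and the other mixed composite follows from the Glivenko property which equates $^{-\sim}$ and $^{\sim-}$. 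The main obstacle is bookkeeping the two negations consistently and making sure the $\sim-$ analogue of Corollary~\ref{psh-30-10} is available; once the Glivenko identities are in hand, each verification collapses to $(A_2)$.
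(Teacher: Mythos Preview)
Your proof is correct and follows essentially the same route as the paper: Lemma~\ref{psh-40} for closure under $\odot$, and the Glivenko-type identities for normality and involutivity (the paper cites Proposition~\ref{psh-30}$(1)$,$(2)$ directly rather than Corollary~\ref{psh-30-10}, but these are equivalent formulations). Your closing concern about needing a separate $^{\sim-}$ analogue is unnecessary: since $A$ is good, $x^{-\sim}=x^{\sim-}$ for all $x$, so the bookkeeping you worry about collapses and both Den-membership conditions reduce to a single one.
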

\begin{proof}
Let $x, y\in \Den(A)$, that is $x^{-\sim}=y^{-\sim}=1$. Applying Lemma \ref{psh-40} we have 
$(x\odot y)^{-\sim}\ge x^{-\sim}\odot y^{-\sim}=1$, hence $(x\odot y)^{-\sim}=1$, that is $x\odot y\in \Den(A)$. \\ 
If $x\in \Den(A)$ and $y\in A$ such that $x\le y$, then $1=x^{-\sim}\le y^{-\sim}$. \\
Hence $y^{-\sim}=1$, so $y\in \Den(A)$. It follows that $F\in {\mathcal F}(A)$. \\ 
Let $x, y\in A$. Applying Proposition \ref{psh-30}$(2)$ we have: \\
$\hspace*{1cm}$ 
$x\ra y \in \Den(A)$ iff $(x\ra y)^{-\sim}=1$ iff $x^{-\sim}\ra y^{-\sim}=1$ iff $x^{-\sim}\le y^{-\sim}$ \\
$\hspace*{3.9cm}$ iff $x^{-\sim}\rs y^{-\sim}=1$ iff $(x\rs y)^{-\sim}=1$ iff $x\rs y \in \Den(A)$. \\
Thus $F\in {\mathcal F}_n(A)$. \\ 
Let $x\in A$. From Proposition \ref{psh-30}$(1)$ we get $(x^{-\sim}\ra x)^{-\sim}=(x^{-\sim}\rs x)^{-\sim}=1$, 
that is $x^{-\sim}\ra x, x^{\sim-}\rs x\in \Den(A)$. Hence $F\in {\mathcal F}_i(A)$. 
We conclude that $\Den(A)\in {\mathcal F}_n(A)\cap {\mathcal F}_i(A)$.
\end{proof}

\begin{rem} \label{psh-if-20-10} Let $A$ be a good pseudo-hoop and $D=\Den(A)$. Then $(x,y)\in \Theta_D$ iff $x^{-\sim}=y^{-\sim}$. 
Indeed, $(x,y)\in \Theta_D$ iff $x\ra y, y\ra x\in D$ iff $(x\ra y)^{-\sim}=(y\ra x)^{-\sim}=1$ iff 
$x^{-\sim}\ra y^{-\sim}=1$ and $y^{-\sim}\ra x^{-\sim}=1$ iff $x^{-\sim}\le y^{-\sim}$ and $y^{-\sim}\le x^{-\sim}$ 
iff $x^{-\sim}=y^{-\sim}$. 
\end{rem}

\begin{prop} \label{psh-if-20-20} Let $A$ be a normal Wajsberg pseudo-hoop. 
Then $A/\Den(A)$ and $\Inv(A)$ are isomorphic. 
\end{prop}
\begin{proof} Denote $D=\Den(A)$. 
It is obvious that the map $f:A/D\longrightarrow \Inv(A)$, defined by $f([x]_{\Theta_D})=x^{-\sim}$, for all $x\in A$, is a pseudo-hoop isomorphism.  
\end{proof}

\begin{prop} \label{psh-if-30} If $A$ is a good pseudo-hoop, then $\Den(A)\subseteq F$, for any 
$F\in {\mathcal F}_i(A)$. 
\end{prop}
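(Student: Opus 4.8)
The plan is to show directly that every dense element $d \in \Den(A)$ lies in an arbitrary involutive filter $F \in \mathcal{F}_i(A)$. Since $F$ is a filter, by $(F_2)$ it suffices to exhibit some element of $F$ that is $\leq d$, or equivalently (using the deductive-system characterization) to show that $d$ follows from elements of $F$ by modus ponens. The natural candidate to feed in is the defining element $x^{-\sim} \ra x$ of an involutive filter, specialized to a well-chosen $x$.

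First I would recall that $d \in \Den(A)$ means $d^{-\sim} = d^{\sim-} = 1$. Applying the definition of an involutive filter with $x := d$, we get $d^{-\sim} \ra d = 1 \ra d = d \in F$; so in fact $d$ is literally the element $x^{-\sim}\ra x$ for $x=d$, which already belongs to $F$. Thus $\Den(A) \subseteq F$ essentially by unwinding the two definitions: the involutivity condition says $x^{-\sim}\ra x \in F$ for all $x$, and when $x$ is dense this element collapses to $x$ itself via $(A_1)$ (i.e. $1 \ra d = d$, using that $1$ is the unit and $1 \ra d = d$ holds in any pseudo-hoop since $1 \odot d = d$ forces $1 \leq d \ra d'$ style reasoning — more simply, $1\ra d = d$ is standard from Proposition \ref{psh-20}).

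I expect no real obstacle here; the only point requiring a line of justification is the identity $1 \ra d = d$, which follows from $(A_1)$ together with $(A_3)$ or directly from Proposition \ref{psh-20}: since $1 \odot d = d \leq d$, we have $1 \leq d \ra d$ trivially, and conversely from $d = 1\odot d$ and adjunction $1 \ra d \geq d$ while $(1\ra d)\odot 1 = 1 \wedge d = d$ gives $1 \ra d \leq d \ra$-type bound; cleanly, $1\ra d = d$ because $1\odot d = d$ is the least $z$ with $1 \leq d \ra z$... Rather than belabor this, I would just cite that $1 \ra x = x$ holds in every pseudo-hoop (an immediate consequence of $(A_1)$ and the adjunction in Proposition \ref{psh-20}(3)). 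With that in hand, for $d \in \Den(A)$ the involutive-filter axiom gives $d = 1 \ra d = d^{-\sim}\ra d \in F$, and hence $\Den(A) \subseteq F$, completing the proof.
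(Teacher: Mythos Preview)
Your proposal is correct and essentially the same as the paper's proof. The only cosmetic difference is that the paper phrases the last step as modus ponens (from $x^{-\sim}=1\in F$ and $x^{-\sim}\ra x\in F$ conclude $x\in F$), whereas you simplify $x^{-\sim}\ra x=1\ra x=x$ first; these are the same argument.
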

\begin{proof}
Let $F\in {\mathcal F}_i(A)$ and $x\in \Den(A)$, that is $x^{-\sim}=1\in F$ and $x^{-\sim}\ra x\in F$. 
Since $F$ is a filter of $A$, we get $x\in F$, that is $\Den(A)\subseteq F$. 
\end{proof}

\begin{prop} \label{psh-if-40} If $F_1\in {\mathcal F}_i(A)$ and $F_2\in {\mathcal F}(A)$ such that 
$F_1\subseteq F_2$, then $F_2\in {\mathcal F}_i(A)$. 
\end{prop}
\begin{proof} It is straightforward.
\end{proof}

\begin{cor} \label{psh-if-40-10} If $A$ is a good pseudo-hoop and $F\in {\mathcal F}(A)$ such that 
$\Den(A)\subseteq F$, then $F \in {\mathcal F}_i(A)$. 
\end{cor}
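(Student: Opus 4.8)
The plan is to deduce this immediately from the two preceding results, Propositions \ref{psh-if-20} and \ref{psh-if-40}, with no new computation required. First I would recall that since $A$ is a good pseudo-hoop, Proposition \ref{psh-if-20} gives $\Den(A) \in {\mathcal F}_n(A)\cap {\mathcal F}_i(A)$; in particular $\Den(A)$ is an involutive filter of $A$.

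Next, given the hypothesis $F\in {\mathcal F}(A)$ with $\Den(A)\subseteq F$, I would apply Proposition \ref{psh-if-40} with $F_1=\Den(A)$ and $F_2=F$. That proposition says that any filter of $A$ containing an involutive filter is itself involutive, so we conclude $F\in {\mathcal F}_i(A)$, which is exactly the assertion of the corollary.

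There is essentially no obstacle here: the corollary is a formal combination of the fact that $\Den(A)$ is the smallest involutive filter (among those automatically available in a good pseudo-hoop) together with the upward-closure of the property of being involutive. If one wanted a self-contained one-line argument instead of invoking Proposition \ref{psh-if-40}, one could note directly that for each $x\in A$ we have $(x^{-\sim}\ra x)^{-\sim}=(x^{\sim-}\rs x)^{-\sim}=1$ by Proposition \ref{psh-30}$(1)$, so $x^{-\sim}\ra x,\ x^{\sim-}\rs x\in \Den(A)\subseteq F$, whence $F\in {\mathcal F}_i(A)$ by Definition \ref{psh-if-10}. Either route is routine, so I would simply cite Propositions \ref{psh-if-20} and \ref{psh-if-40}.
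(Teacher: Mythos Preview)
Your proposal is correct and matches the paper's intended approach: the corollary is stated immediately after Proposition~\ref{psh-if-40} with no proof given, so it is meant to follow at once from Propositions~\ref{psh-if-20} and~\ref{psh-if-40} exactly as you describe. Your alternative self-contained argument via Proposition~\ref{psh-30}$(1)$ is also fine (using goodness to identify $x^{\sim-}$ with $x^{-\sim}$), but the two-proposition citation is all that is needed.
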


The following theorem is a consequence of the above results. 

\begin{theo} \label{psh-if-50} $({\mathcal F}_i(A),\subseteq)$ is a sublattice of the lattice 
$({\mathcal F}(A),\subseteq)$ with the least element $\Den(A)$.
\end{theo}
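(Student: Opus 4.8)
The plan is to prove that $({\mathcal F}_i(A),\subseteq)$ is a sublattice of $({\mathcal F}(A),\subseteq)$ with least element $\Den(A)$ by assembling the facts already established. First I would observe that the join of two filters $F_1, F_2$ in the lattice ${\mathcal F}(A)$ is $<F_1\cup F_2>$, which contains each $F_i$, and the meet is $F_1\cap F_2$. The least-element claim and the closure-under-join claim are immediate consequences of the preceding propositions, so the only genuine work is closure under intersection.

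For the least element: by Proposition \ref{psh-if-30}, every $F\in {\mathcal F}_i(A)$ satisfies $\Den(A)\subseteq F$, and by Proposition \ref{psh-if-20}, $\Den(A)$ itself lies in ${\mathcal F}_i(A)$; hence $\Den(A)$ is the least element of $({\mathcal F}_i(A),\subseteq)$. For closure under join: if $F_1, F_2\in {\mathcal F}_i(A)$, then $F_1\subseteq <F_1\cup F_2>$, so by Proposition \ref{psh-if-40} the filter $<F_1\cup F_2>$ is again involutive, and it is clearly the supremum of $F_1, F_2$ in ${\mathcal F}(A)$. So joins computed in ${\mathcal F}(A)$ stay inside ${\mathcal F}_i(A)$.

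The main obstacle is closure under intersection, since ${\mathcal F}_i(A)$ is an \emph{upward-closed} family within ${\mathcal F}(A)$ and such families are generally not closed under meets. The resolution is to use the least element: for $F_1, F_2\in {\mathcal F}_i(A)$ we have $\Den(A)\subseteq F_1$ and $\Den(A)\subseteq F_2$ by Proposition \ref{psh-if-30}, hence $\Den(A)\subseteq F_1\cap F_2$. Since $A$ is good (this is the standing hypothesis under which $\Den(A)$ and ${\mathcal F}_i(A)$ behave well, as in Propositions \ref{psh-if-20}--\ref{psh-if-30}) and $F_1\cap F_2\in {\mathcal F}(A)$ is a filter containing $\Den(A)$, Corollary \ref{psh-if-40-10} gives $F_1\cap F_2\in {\mathcal F}_i(A)$. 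Thus the meet in ${\mathcal F}(A)$ also stays inside ${\mathcal F}_i(A)$.

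Putting these together: ${\mathcal F}_i(A)$ is closed under the lattice operations of $({\mathcal F}(A),\subseteq)$ and contains $\Den(A)$ as its smallest member, so it is a sublattice with least element $\Den(A)$, as claimed. I would write the proof as three short paragraphs — join, meet, least element — each citing the relevant earlier result, with the meet paragraph carrying essentially all the content via the trick of routing through $\Den(A)$ and Corollary \ref{psh-if-40-10}.
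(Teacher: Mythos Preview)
Your proposal is correct and matches the paper's own treatment, which simply declares the theorem ``a consequence of the above results'' without spelling out the details; your three paragraphs are exactly the intended unpacking via Propositions~\ref{psh-if-20}, \ref{psh-if-30}, \ref{psh-if-40} and Corollary~\ref{psh-if-40-10}.

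One small remark: you describe closure under intersection as ``the main obstacle'' and resolve it by routing through $\Den(A)$, but in fact it is immediate from Definition~\ref{psh-if-10} itself. If $F_1,F_2\in{\mathcal F}_i(A)$ then for every $x\in A$ the elements $x^{-\sim}\ra x$ and $x^{\sim-}\rs x$ lie in both $F_1$ and $F_2$, hence in $F_1\cap F_2$; so $F_1\cap F_2\in{\mathcal F}_i(A)$ directly, with no appeal to goodness or to $\Den(A)$. Your detour through Corollary~\ref{psh-if-40-10} is still valid (and the goodness hypothesis is needed anyway for the least-element claim), but the meet case is not where the content lies.
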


\begin{theo} \label{psh-if-70-10} A bounded pseudo-hoop $A$ is involutive if and only if $\{1\}\in {\mathcal F}_i(A)$.
\end{theo}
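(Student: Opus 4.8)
The plan is to prove both implications directly from the definition of an involutive filter (Definition \ref{psh-if-10}) together with the elementary fact that $\{1\}$ is always a filter and that membership in $\{1\}$ is simply the condition of being equal to $1$.

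First I would handle the forward direction. Suppose $A$ is involutive, i.e. $\Inv(A)=A$ (Definition \ref{psh-inv-10-20}). Since $\{1\}\in{\mathcal F}(A)$, it suffices to check the involutivity condition for $\{1\}$: for every $x\in A$ we must show $x^{-\sim}\ra x=1$ and $x^{\sim-}\rs x=1$. But $x\in\Inv(A)$ means $x^{-\sim}=x^{\sim-}=x$, so $x^{-\sim}\ra x=x\ra x=1$ by $(A_2)$, and likewise $x^{\sim-}\rs x=x\rs x=1$. Hence $\{1\}\in{\mathcal F}_i(A)$.

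For the converse, suppose $\{1\}\in{\mathcal F}_i(A)$. Then for all $x\in A$ we have $x^{-\sim}\ra x\in\{1\}$ and $x^{\sim-}\rs x\in\{1\}$, i.e. $x^{-\sim}\ra x=1$ and $x^{\sim-}\rs x=1$, which by the definition of $\le$ means $x^{-\sim}\le x$ and $x^{\sim-}\le x$. On the other hand Proposition \ref{psh-20-10}$(1)$ gives $x\le x^{-\sim}$ and $x\le x^{\sim-}$ for all $x$ in any bounded pseudo-hoop. By antisymmetry of $\le$ we conclude $x^{-\sim}=x=x^{\sim-}$, so $x\in\Inv(A)$. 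Since $x$ was arbitrary, $\Inv(A)=A$, i.e. $A$ is involutive.

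Neither direction presents a genuine obstacle: the statement is essentially an unwinding of definitions, the only nontrivial input being the universally valid inequalities $x\le x^{-\sim}$, $x\le x^{\sim-}$ from Proposition \ref{psh-20-10}$(1)$, which supply the ``$\le$'' half of the equalities defining $\Inv(A)$. The one point to be careful about is that the involutivity condition for a filter $F$ requires $x^{-\sim}\ra x\in F$ for \emph{all} $x\in A$, not merely for $x\in F$; since we are taking $F=\{1\}$ this forces the pointwise identities above to hold throughout $A$.
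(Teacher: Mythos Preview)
Your proof is correct and follows essentially the same approach as the paper's own proof: both directions are obtained directly from the definitions, using $x^{-\sim}=x^{\sim-}=x$ to get $x^{-\sim}\ra x=x^{\sim-}\rs x=1$ in the forward direction, and in the converse combining $x^{-\sim}\le x$, $x^{\sim-}\le x$ with the reverse inequalities from Proposition~\ref{psh-20-10}$(1)$. Your write-up is slightly more explicit in citing $(A_2)$ and antisymmetry, but the argument is the same.
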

\begin{proof} 
If $A$ is involutive and $x\in A$, then $x^{-\sim}=x^{\sim-}=x$, so $x^{-\sim}\ra x=x^{\sim-}\rs x=1$, that is 
$\{1\}\in {\mathcal F}_i(A)$. \\
Conversely, if $\{1\}\in {\mathcal F}_i(A)$, then we have $x^{-\sim}\ra x=x^{\sim-}\rs x=1$, for all $x\in A$. \\ 
It follows that $x^{-\sim}\le x$ and $x^{\sim-}\le x$, that is $x^{-\sim}=x^{\sim-}=x$. 
Hence $A$ is involutive.  
\end{proof}

\begin{cor} \label{psh-if-70-20} A bounded pseudo-hoop $A$ is involutive if and only if 
${\mathcal F}(A)= {\mathcal F}_i(A)$.
\end{cor}

\begin{theo} \label{psh-if-80} Let $A$ be a bounded pseudo-hoop and let $F\in {\mathcal F}_n(A)$. 
Then $F\in {\mathcal F}_i(A)$ if and only if $A/F$ is an involutive pseudo-hoop. 
\end{theo}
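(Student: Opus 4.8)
The plan is to pass to the quotient $A/F$ via the canonical projection $\pi_F\colon A\to A/F$ and to translate both sides of the desired equivalence into statements about membership in $F$, using the congruence $\Theta_F$. First I would check that $A/F$ is again a bounded pseudo-hoop: since $0\le x$ for all $x\in A$ and $\pi_F$ preserves $\le$, we get $0/F\le x/F$ for all $x$, so $0/F$ is a least element of $A/F$. Because $\pi_F$ is a pseudo-hoop homomorphism it commutes with the two negations, $(x/F)^-=(x\ra 0)/F=x^-/F$ and $(x/F)^\sim=x^\sim/F$, and consequently $(x/F)^{-\sim}=x^{-\sim}/F$ and $(x/F)^{\sim-}=x^{\sim-}/F$.

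Next I would unwind the statement ``$A/F$ is involutive''. Since $\pi_F$ is onto, by Definition \ref{psh-inv-10-20} this is equivalent to requiring, for every $x\in A$, that $(x/F)^{-\sim}=x/F$ and $(x/F)^{\sim-}=x/F$, i.e. $x^{-\sim}/F=x/F$ and $x^{\sim-}/F=x/F$. By the definition of $\Theta_F$ these equalities amount to $x^{-\sim}\ra x\in F$, $\,x\ra x^{-\sim}\in F$, $\,x^{\sim-}\ra x\in F$ and $x\ra x^{\sim-}\in F$. However, Proposition \ref{psh-20-10}$(1)$ gives $x\le x^{-\sim}$ and $x\le x^{\sim-}$, so $x\ra x^{-\sim}=x\ra x^{\sim-}=1\in F$ holds automatically; hence ``$A/F$ is involutive'' reduces precisely to: $x^{-\sim}\ra x\in F$ and $x^{\sim-}\ra x\in F$ for all $x\in A$.

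Finally I would bring in the hypothesis that $F$ is normal, which allows me to exchange $\ra$ and $\rs$ in the second clause: $x^{\sim-}\ra x\in F$ if and only if $x^{\sim-}\rs x\in F$. Thus ``$A/F$ is involutive'' is equivalent to ``$x^{-\sim}\ra x\in F$ and $x^{\sim-}\rs x\in F$ for all $x\in A$'', which is exactly the definition of $F\in{\mathcal F}_i(A)$ (Definition \ref{psh-if-10}). Reading the chain of equivalences in both directions yields the theorem (the degenerate case $F=A$ being trivial on both sides). I do not expect a genuine obstacle; the only points that need a little care are verifying that the negations descend to the quotient and remembering to invoke normality -- rather than merely the definition of $\Theta_F$, which is phrased with $\ra$ -- when passing between the $\ra$- and $\rs$-forms of the $x^{\sim-}$ clause.
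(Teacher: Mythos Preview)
Your proof is correct and follows essentially the same route as the paper's own argument: pass to the quotient, use that the negations commute with the canonical projection, and translate involutivity of $A/F$ into the membership conditions $x^{-\sim}\ra x\in F$ and $x^{\sim-}\rs x\in F$. You are in fact slightly more explicit than the paper in two places---checking that $A/F$ is bounded and singling out normality as the reason one may pass from $x^{\sim-}\ra x\in F$ to $x^{\sim-}\rs x\in F$---but these are only expository differences, not a different method.
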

\begin{proof} 
Let $F\in {\mathcal F}_n(A)\cap {\mathcal F}_i(A)$ and let $x\in A$, hence 
$x^{-\sim}\ra x, x^{\sim-}\rs x\in F$. 
Then we have: \\
$\hspace*{2cm}$ $[x^{-\sim}]_{\Theta_F}\ra [x]_{\Theta_F}=[x^{-\sim}\ra x]_{\Theta_F}=[1]_{\Theta_F}$ and \\
$\hspace*{2cm}$ $[x^{\sim-}]_{\Theta_F}\rs [x]_{\Theta_F}=[x^{\sim-}\rs x]_{\Theta_F}=[1]_{\Theta_F}$. \\
It follows that $[x^{-\sim}]_{\Theta_F}=[x^{\sim-}]_{\Theta_F}=[x]_{\Theta_F}$, hence $A/F$ is involutive. \\
Conversely, assume that $A/F$ is an involutive pseudo-hoop and let $x\in A$. \\
We have $[x^{-\sim}]_{\Theta_F}=[x^{\sim-}]_{\Theta_F}=[x]_{\Theta_F}$, thus 
$[x^{-\sim}\ra x]_{\Theta_F}=[x^{\sim-}\rs x]_{\Theta_F}=[1]_{\Theta_F}$. \\
Hence $x^{-\sim}\ra x, x^{\sim-}\rs x\in F$, that is $F\in {\mathcal F}_i(A)$. 
\end{proof}

\begin{cor} \label{psh-if-80-10} If $A$ is a good pseudo-hoop, then $A/\Den(A)$ is an involutive pseudo-hoop. 
\end{cor}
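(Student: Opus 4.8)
The plan is to read this off directly from the two preceding results, Proposition \ref{psh-if-20} and Theorem \ref{psh-if-80}, with no extra work. First I would note that a good pseudo-hoop is by definition bounded (the negations $^-,{}^\sim$ and hence $\Den(A)$ only make sense once a least element $0$ is present), so Theorem \ref{psh-if-80} applies to $A$.

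Next I would invoke Proposition \ref{psh-if-20}: since $A$ is good, $\Den(A)\in {\mathcal F}_n(A)\cap {\mathcal F}_i(A)$. In particular $F:=\Den(A)$ is a normal filter of the bounded pseudo-hoop $A$, and it is involutive.

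Finally I would apply Theorem \ref{psh-if-80} with this $F$: since $F\in {\mathcal F}_n(A)$, we have $F\in {\mathcal F}_i(A)$ if and only if $A/F$ is an involutive pseudo-hoop; as $\Den(A)\in {\mathcal F}_i(A)$, the right-hand side holds, i.e. $A/\Den(A)$ is an involutive pseudo-hoop. There is no genuine obstacle here — the corollary is purely a matter of citing the normality and involutivity of $\Den(A)$ established in Proposition \ref{psh-if-20} and feeding them into the equivalence of Theorem \ref{psh-if-80}; the only point worth a word is the observation that "good" already entails "bounded", so that Theorem \ref{psh-if-80} is legitimately in force.
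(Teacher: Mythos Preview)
Your proposal is correct and matches the paper's intended argument: the corollary is stated without proof precisely because it follows immediately from Proposition \ref{psh-if-20} (giving $\Den(A)\in {\mathcal F}_n(A)\cap {\mathcal F}_i(A)$) together with Theorem \ref{psh-if-80}. Your remark that ``good'' already entails ``bounded'' is also accurate, since goodness is defined in the paper only for bounded pseudo-hoops.
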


\begin{theo} \label{psh-if-90} Let $A$ be a good pseudo-hoop and $F\in {\mathcal F}(A)$. 
The following are equivalent:\\ 
$(a)$ $F\in {\mathcal F}_i(A);$ \\
$(b)$ $(y^{-}\rs x^{-})\ra (x\ra y)\in F$ and $(y^{\sim}\ra x^{\sim})\rs (x\rs y)\in F$, for all $x, y\in A;$ \\
$(c)$ $(x^{-}\rs y)\ra (y^{\sim}\ra x)\in F$ and $(x^{\sim}\ra y)\rs (y^{-}\rs x)\in F$, for all $x, y\in A$. 
\end{theo}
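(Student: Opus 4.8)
The plan is to prove the two equivalences $(a)\Leftrightarrow(b)$ and $(a)\Leftrightarrow(c)$ separately. For the forward implications $(a)\Rightarrow(b)$ and $(a)\Rightarrow(c)$ I would use one uniform device: rewrite the outer residual of the element in question by $(A_3)$ (resp. $(A_4)$) so that its left argument becomes a single $\odot$-product, bound that product from above by an appropriate double negation $z^{-\sim}$ (resp. $z^{\sim-}$) using $x\le x^{-\sim}$ (Proposition \ref{psh-20-10}$(1)$), Proposition \ref{psh-20}$(2),(13)$ and the negation identities of Proposition \ref{psh-20-10}$(4)$, and then invoke the contravariance of $\ra$ (Proposition \ref{psh-20}$(7)$) together with the hypothesis $z^{-\sim}\ra z\in F$ (resp. $z^{\sim-}\rs z\in F$) and the upward closure of the filter $F$.

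Concretely, for the first half of $(a)\Rightarrow(b)$: by Proposition \ref{psh-20-10}$(4)$ we have $y^{-}\rs x^{-}=x^{-\sim}\ra y^{-\sim}$, hence by $(A_3)$ the element $(y^{-}\rs x^{-})\ra(x\ra y)$ equals $\bigl((x^{-\sim}\ra y^{-\sim})\odot x\bigr)\ra y$. Since $x\le x^{-\sim}$, Proposition \ref{psh-20}$(13),(2)$ give $(x^{-\sim}\ra y^{-\sim})\odot x\le(x^{-\sim}\ra y^{-\sim})\odot x^{-\sim}=x^{-\sim}\wedge y^{-\sim}\le y^{-\sim}$, so by Proposition \ref{psh-20}$(7)$ we obtain $y^{-\sim}\ra y\le(y^{-}\rs x^{-})\ra(x\ra y)$; as $y^{-\sim}\ra y\in F$ the claim follows. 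The $\rs$ half is dual, using $(A_4)$, the second identities of Proposition \ref{psh-20-10}$(4)$ and $y^{\sim-}\rs y\in F$. The implication $(a)\Rightarrow(c)$ runs the same way: by $(A_3)$ one has $(x^{-}\rs y)\ra(y^{\sim}\ra x)=\bigl((x^{-}\rs y)\odot y^{\sim}\bigr)\ra x$, and since $y\le y^{\sim-}$ and $x^{-}\rs y^{\sim-}=y^{\sim}\ra x^{-\sim}$ (Proposition \ref{psh-20-10}$(4)$ applied with $x^{-}$ in place of $x$), we get $(x^{-}\rs y)\odot y^{\sim}\le(y^{\sim}\ra x^{-\sim})\odot y^{\sim}=y^{\sim}\wedge x^{-\sim}\le x^{-\sim}$, whence $x^{-\sim}\ra x\le(x^{-}\rs y)\ra(y^{\sim}\ra x)\in F$; again the $\rs$ half is dual, via $(A_4)$ and $x^{\sim-}\rs x\in F$.

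For the converses I would exploit the freedom of the universally quantified variables. For $(b)\Rightarrow(a)$, substituting $x\mapsto x^{-\sim}$, $y\mapsto x$ in the first identity of $(b)$ and using $x^{-\sim-}=x^{-}$ (Proposition \ref{psh-20-10}$(3)$) together with $x^{-}\rs x^{-}=1$ and $1\ra z=z$ collapses it to $x^{-\sim}\ra x\in F$; symmetrically the second identity of $(b)$, with $x\mapsto x^{\sim-}$, $y\mapsto x$ and $x^{\sim-\sim}=x^{\sim}$, yields $x^{\sim-}\rs x\in F$, so $F\in{\mathcal F}_i(A)$. For $(c)\Rightarrow(a)$ the specializations are even more direct: putting $y=x^{-}$ in the first identity of $(c)$ gives $(x^{-}\rs x^{-})\ra(x^{-\sim}\ra x)=x^{-\sim}\ra x\in F$, and putting $y=x^{\sim}$ in the second gives $x^{\sim-}\rs x\in F$.

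I expect no serious obstacle; the work is essentially careful bookkeeping. The two places that need attention are: (i) applying the negation identities of Proposition \ref{psh-20-10}$(4)$ with the correct substituted arguments (for instance $x^{-}$ or $x^{\sim}$ in place of $x$), since a wrong substitution produces a nearly-right but useless term; and (ii) guessing, in the converse implications, the specializations of the free variables that make one side of the residual equal to $1$. We also use repeatedly the trivial identities $1\ra z=z$ and $1\rs z=z$. Incidentally, only properties of a bounded pseudo-hoop are invoked in this argument; goodness of $A$ appears not to be needed here, but is kept for consistency with the rest of the section.
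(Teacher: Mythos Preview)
Your proof is correct and takes a genuinely different route from the paper's. The paper proves the cycle $(a)\Rightarrow(b)\Rightarrow(c)\Rightarrow(a)$: for $(a)\Rightarrow(b)$ it invokes the Glivenko-type identity $(x\ra y)^{-\sim}=x^{-\sim}\ra y^{-\sim}$ of Proposition~\ref{psh-30}$(2)$ (valid in good pseudo-hoops) to recognise $y^{-}\rs x^{-}$ as $(x\ra y)^{-\sim}$, so that the element in question is literally $(x\ra y)^{-\sim}\ra(x\ra y)\in F$; the passage $(b)\Rightarrow(c)$ is then a substitution-and-monotonicity argument, and $(c)\Rightarrow(a)$ is obtained by putting $y:=0$. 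You instead prove $(a)\Leftrightarrow(b)$ and $(a)\Leftrightarrow(c)$ directly, in each case rewriting the outer residual via $(A_3)$/$(A_4)$ into a product on the left, bounding that product by a double negation using only Proposition~\ref{psh-20}$(2),(6),(7),(13)$ and Proposition~\ref{psh-20-10}$(1),(3),(4)$, and then applying upward closure of $F$. Your specializations for the converses ($x\mapsto x^{-\sim}$, $y\mapsto x$ for $(b)\Rightarrow(a)$; $y:=x^{-}$, $y:=x^{\sim}$ for $(c)\Rightarrow(a)$) work cleanly. The payoff of your approach is exactly what you observed at the end: nothing beyond the bounded pseudo-hoop identities is used, so the equivalence holds without the goodness hypothesis, whereas the paper's argument genuinely needs goodness through Proposition~\ref{psh-30}$(2)$.
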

\begin{proof}
$(a)\Rightarrow (b)$ 
Let $F\in {\mathcal F}_i(A)$ and $x, y\in A$. Then: \\
$\hspace*{2cm}$ $(x\ra y)^{-\sim}\ra (x\ra y)\in F$ and $(x\rs y)^{-\sim}\rs (x\rs y)\in F$. \\
Applying Propositions \ref{psh-30}$(2)$ we have: \\
$\hspace*{2cm}$ $(x^{-\sim}\ra y^{-\sim})\ra (x\ra y)\in F$ and $(x^{-\sim}\rs y^{-\sim})\rs (x\rs y)\in F$. \\
Hence by Propositions \ref{psh-20-10}$(4)$ we get: \\
$\hspace*{2cm}$ $(y^{-}\rs x^{-})\ra (x\ra y)\in F$ and $(y^{\sim}\ra x^{\sim})\rs (x\rs y)\in F$. \\ 
$(b)\Rightarrow (c)$
Replacing $x$ by $x^{\sim}$ and $x$ by $x^{-}$ in $(b)$ we obtain: \\
$\hspace*{2cm}$ $(y^{-}\rs x^{-\sim})\ra (x^{\sim}\ra y)\in F$ and $(y^{\sim}\ra x^{-\sim})\rs (x^{-}\rs y)\in F$, \\
respectively. Changing $x$ and $y$ we get: \\
$\hspace*{2cm}$ $(x^{-}\rs y^{-\sim})\ra (y^{\sim}\ra x)\in F$ and $(x^{\sim}\ra y^{-\sim})\rs (y^{-}\rs x)\in F$. \\
From $y\le y^{-\sim}$, applying Proposition \ref{psh-20}$(6)$ we have: \\ 
$\hspace*{2cm}$ $x^{-}\rs y\le x^{-}\rs y^{-\sim}$ and $x^{\sim}\ra y\le x^{\sim}\ra y^{-\sim}$. \\
Finally, by Proposition \ref{psh-20}$(7)$ we get: \\
$\hspace*{2cm}$ $(x^{-}\rs y^{-\sim})\ra (y^{\sim}\ra x)\le (x^{-}\rs y)\ra (y^{\sim}\ra x)$ and \\
$\hspace*{2cm}$ $(x^{\sim}\ra y^{-\sim})\rs (y^{-}\rs x)\le (x^{\sim}\ra y)\rs (y^{-}\rs x)$. \\ 
It follows that $(x^{-}\rs y)\ra (y^{\sim}\ra x)\in F$ and $(x^{\sim}\ra y)\rs (y^{-}\rs x)\in F$. \\
$(c)\Rightarrow (a)$ Taking $y:=0$ in $(c)$ 
we get $x^{-\sim}\ra x, x^{-\sim}\rs x\in F$. 
Hence $F\in {\mathcal F}_i(A)$. 
\end{proof}

\begin{prop} \label{psh-if-100} Let $f: A\longrightarrow B$ be a bounded pseudo-hoop homomorphism. 
If $F\in {\mathcal F}_i(B)$, then $f^{-1}(F)\in {\mathcal F}_i(A)$. 
\end{prop}
\begin{proof}
Consider $F\in {\mathcal F}_i(B)$ and let $x\in A$. Then $f(x)=y\in B$. 
Since $F\in {\mathcal F}_i(B)$, then $y^{-\sim}\ra y, y^{\sim-}\rs y \in F$, so
$f(x)^{-\sim}\ra f(x), f(x)^{\sim-}\rs f(x) \in F$. \\
Hence $f(x^{-\sim}\ra x), f(x^{\sim-}\rs x) \in F$, that is $x^{-\sim}\ra x, x^{\sim-}\rs x \in f^{-1}(F)$. \\
We conclude that $f^{-1}(F)\in {\mathcal F}_i(A)$. 
\end{proof}

\begin{Def} \label{psh-ff-10} Let $(A,\odot,\ra,\rs,1)$ be a pseudo-hoop and let $F\in {\mathcal F}(A)$. 
Then $F$ is called \emph{fantastic} if it satisfies the following conditions 
for all $x, y\in A$: \\ 
$(ff_1)$ $y\ra x\in F$ implies $x\vee_1 y\ra x\in F;$ \\
$(ff_2)$ $y\rs x\in F$ implies $x\vee_2 y\rs x\in F$. 
\end{Def}

We will denote by ${\mathcal F}_f(A)$ the set of all fantastic filters of a pseudo-hoop $A$. 

\begin{prop} \label{psh-ff-20} Let $(A,\ra,\rs,1)$  be a pseudo-hoop and let $F\subseteq A$. 
Then $F\in {\mathcal F}_f(A)$ if and only if it satisfies the following conditions for all $x, y, z\in A$: \\
$(1)$ $1\in F;$ \\
$(2)$ $z\ra (y\ra x)\in F$ and $z\in F$ implies $x\vee_1 y\ra x\in F;$ \\
$(3)$ $z\rs (y\rs x)\in F$ and $z\in F$ implies $x\vee_2 y\rs x\in F$.
\end{prop}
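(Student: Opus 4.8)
The plan is to prove both directions of the equivalence, using the characterizations of filters as deductive systems (conditions $(DS_1)$, $(DS_2)$, $(DS_2')$) that were recorded in Section 2. For the forward direction, suppose $F\in{\mathcal F}_f(A)$. Then $1\in F$ since $F$ is a filter, so $(1)$ holds. For $(2)$, assume $z\ra(y\ra x)\in F$ and $z\in F$; since $F$ is a deductive system (it is a filter, hence satisfies $(DS_2)$), from $z\in F$ and $z\ra(y\ra x)\in F$ we deduce $y\ra x\in F$, and then $(ff_1)$ gives $x\vee_1 y\ra x\in F$. The argument for $(3)$ is symmetric, using $(DS_2')$ and $(ff_2)$. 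So the forward implication is essentially just unfolding the deductive-system property.

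For the converse, suppose $F\subseteq A$ satisfies $(1)$, $(2)$, $(3)$. First I would show $F$ is a filter. Since $1\in F$, it suffices to verify the deductive-system conditions $(DS_2)$ and $(DS_2')$, which by the equivalence recorded in Section 2 gives that $F$ is a filter. To get $(DS_2)$, take $x, x\ra y\in F$ and specialize $(2)$ cleverly: the natural choice is to pick instances of the variables in $(2)$ so that $x\vee_1 y\ra x$ collapses to $y$. Indeed, setting (in the statement of $(2)$) the ``$y$'' to be $1$ and keeping track of $x\vee_1 1 = 1$ by Proposition~\ref{psh-80}$(1)$, we get $1\ra x = x$, which is not quite $y$; so instead I would run $(2)$ with the roles arranged so that $x\vee_1 y$ becomes $x$ — e.g. using $y\le x$ so that $x\vee_1 y = x$ by Proposition~\ref{psh-80}$(2)$ — after a preliminary step. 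The cleanest route: take $z := x$, and in the hypothesis $z\ra(y'\ra x')$ choose $y' := x\ra y$ so that $x\ra((x\ra y)\ra x')$; but one wants to land on $x'\vee_1(x\ra y)\ra x'$. This requires a bit of care, so the main obstacle is finding the right substitution that makes $(2)$ yield modus ponens. Once $F$ is shown to be a filter, verifying $(ff_1)$ and $(ff_2)$ is immediate: given $y\ra x\in F$, apply $(2)$ with $z := y\ra x$ (which lies in $F$) and observe $z\ra(y\ra x) = (y\ra x)\ra(y\ra x) = 1\in F$ by $(A_2)$, so the hypothesis of $(2)$ is met and $x\vee_1 y\ra x\in F$ follows; symmetrically for $(ff_2)$ using $(3)$ and $(A_2)$.

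Summarizing the intended structure of the write-up: (i) forward direction via $(DS_2)$/$(DS_2')$ and Definition~\ref{psh-ff-10}; (ii) converse, Step 1: $F$ is a filter, proved by checking $1\in F$ (immediate from $(1)$) and $(DS_2)$, $(DS_2')$ using a suitable specialization of $(2)$ and $(3)$ together with Proposition~\ref{psh-80}$(1)$--$(2)$; (iii) converse, Step 2: $(ff_1)$ and $(ff_2)$ hold, via $(A_2)$ and taking $z := y\ra x$ (resp. $z := y\rs x$) in $(2)$ (resp. $(3)$). The only genuinely delicate point is Step (ii): one must check that $(2)$ and $(3)$, which only directly speak about expressions of the form $x\vee_1 y\ra x$, are strong enough to recover the plain deduction rule $x, x\ra y\in F\Rightarrow y\in F$. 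I expect this to work by a one-line substitution (choosing the free variable playing the role of ``$y$'' in $(2)$ to be below the one playing the role of ``$x$'', or equal to $1$, so that the $\vee_1$ disappears by Proposition~\ref{psh-80}), but pinning down exactly that substitution is where the argument must be written carefully.
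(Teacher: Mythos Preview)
Your overall strategy is correct and matches the natural argument (the paper itself gives no details, only the pointer ``Similarly as \cite[Prop.~4.2]{Ciu21}'', so there is nothing further to compare against). The forward direction and Step~(iii) of the converse are exactly right.

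The only place you hesitate---Step~(ii), recovering $(DS_2)$ from $(2)$---actually works with the very substitution you first wrote down and then discarded. You became confused only because the variable names in $(2)$ collide with the names in the deductive-system rule. Rename the latter: suppose $a\in F$ and $a\ra b\in F$, and aim for $b\in F$. In $(2)$ take $y:=1$, $x:=b$, $z:=a$. Then $z\ra(y\ra x)=a\ra(1\ra b)=a\ra b\in F$ and $z=a\in F$, so $(2)$ gives $x\vee_1 y\ra x=b\vee_1 1\ra b=1\ra b=b\in F$, using Proposition~\ref{psh-80}$(1)$. Symmetrically, $(3)$ with $y:=1$ yields $(DS_2')$. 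So your ``not quite $y$'' worry was just a labeling artifact: the conclusion $x\in F$ of the specialized $(2)$ \emph{is} the desired element once you name the target $x$ rather than $y$.
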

\begin{proof}
Similarly as \cite[Prop. 4.2]{Ciu21}.
\end{proof}

\begin{theo} \label{psh-ff-20-10} A pseudo-hoop $A$ is a Wajsberg pseudo-hoop if and only if  
$\{1\}\in {\mathcal F}_f(A)$. 
\end{theo}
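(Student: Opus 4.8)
The plan is to prove the two implications separately, using the already-established equivalences in Theorem \ref{psh-waj-10} (characterization of Wajsberg pseudo-hoops) and the closure properties of $\vee_1,\vee_2$ from Propositions \ref{psh-80} and \ref{psh-90}. For the forward direction, suppose $A$ is a Wajsberg pseudo-hoop. I would verify that $\{1\}$ satisfies $(ff_1)$ and $(ff_2)$ from Definition \ref{psh-ff-10}. So let $x,y\in A$ with $y\ra x\in\{1\}$, i.e. $y\le x$. By Proposition \ref{psh-80}$(2)$, $x\le y$ gives $x\vee_1 y=y$, but here we have $y\le x$, so instead I use that $x\le x\vee_1 y$ always (Proposition \ref{psh-80}$(5)$) together with the Wajsberg property: since $A$ is Wajsberg, $x\vee_1 y=y\vee_1 x$, and from $y\le x$ and Proposition \ref{psh-80}$(2)$ we get $y\vee_1 x=x$, hence $x\vee_1 y=x$, so $x\vee_1 y\ra x=x\ra x=1\in\{1\}$ by $(A_2)$. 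Symmetrically $(ff_2)$ holds using $x\vee_2 y=y\vee_2 x$. Therefore $\{1\}\in{\mathcal F}_f(A)$.

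For the converse, assume $\{1\}\in{\mathcal F}_f(A)$; I must show $A$ is Wajsberg, and the cleanest route is to verify condition $(d)$ of Theorem \ref{psh-waj-10}: if $x\le y$ then $y=y\vee_1 x=y\vee_2 x$. So suppose $x\le y$, i.e. $x\ra y=1$. Here the roles of $x,y$ in the definition of a fantastic filter must be matched carefully: applying $(ff_1)$ with the pair $(y,x)$ in the roles of $(y,x)$ of Definition \ref{psh-ff-10} would need $x\ra y\in F$ to conclude $y\vee_1 x\ra y\in F$; since $F=\{1\}$ this gives $y\vee_1 x\ra y=1$, i.e. $y\vee_1 x\le y$. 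Combined with $y\le y\vee_1 x$ (Proposition \ref{psh-80}$(5)$) and antisymmetry, $y\vee_1 x=y$. Symmetrically, $(ff_2)$ with the same pair gives $y\vee_2 x=y$. Thus condition $(d)$ of Theorem \ref{psh-waj-10} holds, so $A$ is a Wajsberg pseudo-hoop.

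The main point requiring care — rather than a genuine obstacle — is getting the variable substitution in Definition \ref{psh-ff-10} right: $(ff_1)$ reads ``$y\ra x\in F$ implies $x\vee_1 y\ra x\in F$'', and to exploit the hypothesis $x\le y$ one applies it after swapping the names of $x$ and $y$, so that the hypothesis becomes $x\ra y\in F$ and the conclusion becomes $y\vee_1 x\ra y\in F$. One also needs $x\vee_1 y = y\vee_1 x$ only in the forward direction, which is exactly $(W_1)$; in the backward direction no commutativity of $\vee_1$ is assumed and instead it is derived. No nontrivial computation is needed beyond invoking $(A_2)$, Proposition \ref{psh-80}$(2)$ and $(5)$, and the equivalence $(a)\Leftrightarrow(d)$ of Theorem \ref{psh-waj-10}.
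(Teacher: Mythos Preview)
Your proof is correct. The paper itself gives no argument for this theorem beyond the citation ``Similarly as \cite[Th.~4.7]{Ciu21}'', so there is no in-paper proof to compare against; your direct verification via condition $(d)$ of Theorem~\ref{psh-waj-10} together with Proposition~\ref{psh-80}$(2)$,$(5)$ is exactly the natural argument and almost certainly what the cited reference does.
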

\begin{proof}
Similarly as \cite[Th. 4.7]{Ciu21}.
\end{proof}

\begin{theo} \label{psh-ff-30} Let $A$ be a pseudo-hoop and $F\in {\mathcal F}_n(A)$. 
Then $F\in {\mathcal F}_f(A)$ if and only if $A/F$ is a Wajsberg pseudo-hoop. 
\end{theo}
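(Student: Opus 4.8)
The plan is to reduce this to Theorem~\ref{psh-ff-20-10} (``$A$ is Wajsberg iff $\{1\}\in{\mathcal F}_f(A)$'') by transporting the fantastic-filter condition along the canonical projection $\pi_F\colon A\to A/F$. Since $F$ is normal, $A/F$ is a pseudo-hoop and $\pi_F$ is a surjective pseudo-hoop homomorphism with $\Ker(\pi_F)=F=[1]_{\Theta_F}$; in particular $\pi_F$ preserves $\ra$ and $\rs$, hence also the derived operations $x\vee_1 y=(x\ra y)\rs y$ and $x\vee_2 y=(x\rs y)\ra y$, so $\pi_F(x\vee_i y)=\pi_F(x)\vee_i\pi_F(y)$ for $i=1,2$. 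Throughout I also use $[z]_{\Theta_F}=[1]_{\Theta_F}$ iff $z\in F$, and $[u]_{\Theta_F}\le[v]_{\Theta_F}$ iff $[u]_{\Theta_F}\ra[v]_{\Theta_F}=[1]_{\Theta_F}$ iff $[u]_{\Theta_F}\rs[v]_{\Theta_F}=[1]_{\Theta_F}$.

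Suppose first $F\in{\mathcal F}_f(A)$; I would show $\{[1]_{\Theta_F}\}\in{\mathcal F}_f(A/F)$ and then invoke Theorem~\ref{psh-ff-20-10} for the pseudo-hoop $A/F$. Take $[x]_{\Theta_F},[y]_{\Theta_F}\in A/F$ with $[y]_{\Theta_F}\ra[x]_{\Theta_F}=[1]_{\Theta_F}$, i.e.\ $y\ra x\in F$. Fantasticity of $F$ gives $(x\vee_1 y)\ra x\in F$, and applying $\pi_F$ turns this into $\bigl([x]_{\Theta_F}\vee_1[y]_{\Theta_F}\bigr)\ra[x]_{\Theta_F}=[1]_{\Theta_F}$, which is exactly $(ff_1)$ for $\{[1]_{\Theta_F}\}$; the dual computation with $\rs$ and $\vee_2$ gives $(ff_2)$. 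Hence $\{[1]_{\Theta_F}\}\in{\mathcal F}_f(A/F)$ and $A/F$ is Wajsberg.

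For the converse, assume $A/F$ is Wajsberg. Since $F$ is a filter, $1\in F$, so it suffices to check $(ff_1)$ and $(ff_2)$. Let $x,y\in A$ with $y\ra x\in F$; then $[y]_{\Theta_F}\le[x]_{\Theta_F}$, so Theorem~\ref{psh-waj-10}$(d)$ gives $[x]_{\Theta_F}\vee_1[y]_{\Theta_F}=[x]_{\Theta_F}$. Using that $\pi_F$ preserves $\vee_1$, this yields $[(x\vee_1 y)\ra x]_{\Theta_F}=[x]_{\Theta_F}\ra[x]_{\Theta_F}=[1]_{\Theta_F}$, i.e.\ $(x\vee_1 y)\ra x\in F$, which is $(ff_1)$. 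Starting instead from $y\rs x\in F$, which again gives $[y]_{\Theta_F}\le[x]_{\Theta_F}$, and using $[x]_{\Theta_F}\vee_2[y]_{\Theta_F}=[x]_{\Theta_F}$ from Theorem~\ref{psh-waj-10}$(d)$, the same argument produces $(x\vee_2 y)\rs x\in F$, i.e.\ $(ff_2)$. Thus $F\in{\mathcal F}_f(A)$.

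I do not expect a genuine obstacle: this is a correspondence-type result, and the only points needing care are that $\pi_F$ transports $\vee_1,\vee_2$ faithfully (which rests on normality of $F$ together with the homomorphism axioms) and the precise invocations of Theorem~\ref{psh-ff-20-10} and Theorem~\ref{psh-waj-10}$(d)$. Equivalently, the whole proof can be packaged as the single equivalence $F\in{\mathcal F}_f(A)$ iff $\{[1]_{\Theta_F}\}\in{\mathcal F}_f(A/F)$, followed by an appeal to Theorem~\ref{psh-ff-20-10}.
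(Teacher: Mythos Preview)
Your proof is correct and follows essentially the same route as the paper: both directions transport the fantastic-filter condition along the canonical projection $\pi_F$ and reduce to Theorem~\ref{psh-ff-20-10}. The only cosmetic difference is that in the converse you invoke Theorem~\ref{psh-waj-10}$(d)$ directly (from $[y]_{\Theta_F}\le[x]_{\Theta_F}$ conclude $[x]_{\Theta_F}\vee_i[y]_{\Theta_F}=[x]_{\Theta_F}$), whereas the paper goes back through Theorem~\ref{psh-ff-20-10} to get $\{[1]_{\Theta_F}\}\in{\mathcal F}_f(A/F)$ and then applies $(ff_1)$, $(ff_2)$ in the quotient; these are interchangeable.
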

\begin{proof}
Let $F\in {\mathcal F}_f(A)$ and $x, y\in A$ such that $[y]_{\Theta_F}\ra [x]_{\Theta_F}=[1]_{\Theta_F}$, 
so $[y\ra x]_{\Theta_F}=[1]_{\Theta_F}$, that is $y\ra x\in F$. 
Hence $x\vee_1 y\ra x\in F$, so 
$([x]_{\Theta_F}\vee_1 [y]_{\Theta_F})\ra [x]_{\Theta_F}= [x\vee_1 y\ra x]_{\Theta_F}=[1]_{\Theta_F}$. 
Thus $[1]_{\Theta_F}\in {\mathcal F}_f(A/F)$, so by Theorem \ref{psh-ff-20-10}, $A/F$ is a a Wajsberg pseudo-hoop. \\
Conversely, if $A/F$ is a Wajsberg pseudo-hoop, then $[1]_{\Theta_F}\in {\mathcal F}_f(A/F)$. 
If $y\ra x\in F=[1]_{\Theta_F}$, we have $[y]_{\Theta_F}\ra [x]_{\Theta_F}\in [1]_{\Theta_F}$. 
Since $[1]_{\Theta_F}\in {\mathcal F}_f(A/F)$, we get 
$([x]_{\Theta_F}\vee_1 [y]_{\Theta_F})\ra [x]_{\Theta_F}= [1]_{\Theta_F}$, so 
$[x\vee_1 y\ra x]_{\Theta_F}=[1]_{\Theta_F}$, that is $x\vee_1 y\ra x\in F$. \\
Similarly $y\rs x\in F$ implies $x\vee_2 y\rs x\in F$. 
It follows that $F\in {\mathcal F}_f(A)$.
\end{proof}

\begin{prop} \label{psh-ff-40} If $A$ is a bounded pseudo-hoop, then ${\mathcal F}_f(A)\subseteq {\mathcal F}_i(A)$. 
\end{prop}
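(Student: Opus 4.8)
The plan is to verify directly that an arbitrary fantastic filter $F$ satisfies the defining condition of an involutive filter from Definition \ref{psh-if-10}, namely $x^{-\sim}\ra x\in F$ and $x^{\sim-}\rs x\in F$ for all $x\in A$. The key observation is the remark recorded just before Proposition \ref{psh-80}: since $A$ is bounded, $x^{-\sim}=x\vee_1 0$ and $x^{\sim-}=x\vee_2 0$. Thus the two membership statements to be checked are precisely $x\vee_1 0\ra x\in F$ and $x\vee_2 0\rs x\in F$, which are of exactly the form appearing in the conclusions of the fantastic-filter axioms $(ff_1)$, $(ff_2)$ of Definition \ref{psh-ff-10}.

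First I would fix $F\in{\mathcal F}_f(A)$ and an arbitrary $x\in A$, and apply axiom $(ff_1)$ with $y:=0$: it asserts that $0\ra x\in F$ implies $x\vee_1 0\ra x\in F$. Since $A$ is bounded we have $0\le x$, hence $0\ra x=1$, and $1\in F$ because $F$ is a nonempty upward-closed subset of $A$. So the hypothesis of $(ff_1)$ is satisfied and we obtain $x\vee_1 0\ra x=x^{-\sim}\ra x\in F$.

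Symmetrically, applying $(ff_2)$ with $y:=0$ and using $0\rs x=1\in F$ yields $x\vee_2 0\rs x=x^{\sim-}\rs x\in F$. Since $x\in A$ was arbitrary, $F$ meets the requirement of Definition \ref{psh-if-10}, i.e. $F\in{\mathcal F}_i(A)$; as $F$ was an arbitrary element of ${\mathcal F}_f(A)$, this proves ${\mathcal F}_f(A)\subseteq{\mathcal F}_i(A)$.

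There is essentially no obstacle: the whole argument is the substitution $y=0$ into $(ff_1)$ and $(ff_2)$, together with the trivial facts $0\ra x=0\rs x=1$ and $1\in F$. The only points requiring a moment's care are invoking the boundedness of $A$ (given in the hypothesis) so that $0$ exists and $0\le x$, and correctly citing the identification $x^{-\sim}=x\vee_1 0$, $x^{\sim-}=x\vee_2 0$.
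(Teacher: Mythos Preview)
Your proof is correct and follows essentially the same approach as the paper: both substitute $y:=0$ into the fantastic-filter axioms $(ff_1)$ and $(ff_2)$, use $0\ra x=0\rs x=1\in F$, and invoke the identifications $x^{-\sim}=x\vee_1 0$, $x^{\sim-}=x\vee_2 0$ to conclude $F\in{\mathcal F}_i(A)$.
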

\begin{proof}
Let $F\in {\mathcal F}_f(A)$ and $x\in A$. Since $0\ra x=1\in F$ and $0\rs x=1\in F$, then 
$x\vee_1 0\ra x\in F$ and $x\vee_2 0\rs x\in F$, that is $x^{-\sim}\ra x, x^{\sim-}\rs x \in F$. 
It follows that $F\in {\mathcal F}_i(A)$, hence ${\mathcal F}_f(A)\subseteq {\mathcal F}_i(A)$.
\end{proof}

The next result is proved following an idea from \cite{Rac3}. 

\begin{theo} \label{psh-ff-50} If $A$ is a good pseudo-hoop, then ${\mathcal F}_f(A)= {\mathcal F}_i(A)$. 
\end{theo}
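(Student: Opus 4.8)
The inclusion $\mathcal F_f(A)\subseteq\mathcal F_i(A)$ is immediate from Proposition~\ref{psh-ff-40}, a good pseudo-hoop being bounded; so the work is the reverse inclusion, and the plan is to transport it to the quotient by $\Den(A)$. Fix $F\in\mathcal F_i(A)$. By Proposition~\ref{psh-if-30}, $\Den(A)\subseteq F$, and since $\Den(A)$ is a normal filter (Proposition~\ref{psh-if-20}) the quotient $A/\Den(A)$ is defined, $\overline F:=\pi(F)$ is a filter of it (with $\pi:=\pi_{\Den(A)}$), and $\pi^{-1}(\overline F)=F$ because $\Den(A)=\Ker(\pi)\subseteq F$. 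It therefore suffices to show that $\overline F$ is fantastic in $A/\Den(A)$ and that the preimage of a fantastic filter under a pseudo-hoop homomorphism is again fantastic.

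The first point I would handle as follows. By Corollary~\ref{psh-if-80-10} the quotient $A/\Den(A)$ is an involutive pseudo-hoop; the crucial upgrade is that it is in fact a bounded Wajsberg pseudo-hoop. Granting this, in any Wajsberg pseudo-hoop $x\vee_1 y=x\vee_2 y=x\vee y$, so Proposition~\ref{psh-waj-10-60} gives $(x\vee_1 y)\ra x=(x\ra x)\wedge(y\ra x)=y\ra x$ and $(x\vee_2 y)\rs x=y\rs x$; hence in such a pseudo-hoop the hypotheses of $(ff_1)$ and $(ff_2)$ are literally equivalent to their conclusions, so every filter — in particular $\overline F$ — is fantastic.

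The second point is a routine preimage argument. If $g\colon A\to B$ is a pseudo-hoop homomorphism and $G\in\mathcal F_f(B)$, then $y\ra x\in g^{-1}(G)$ gives $g(y)\ra g(x)=g(y\ra x)\in G$, hence $(g(x)\vee_1 g(y))\ra g(x)\in G$; since $g$ preserves $\ra,\rs$ we have $g(x\vee_1 y)=g(x)\vee_1 g(y)$, so $(x\vee_1 y)\ra x\in g^{-1}(G)$, and $(ff_2)$ is handled identically. Taking $g=\pi$ and $G=\overline F$ yields $F=\pi^{-1}(\overline F)\in\mathcal F_f(A)$.

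The hard part is the upgrade ``$A/\Den(A)$ involutive $\Rightarrow$ Wajsberg'', equivalently that $\Inv(A)$ with its induced operations is a bounded Wajsberg pseudo-hoop; here one exploits the good hypothesis through the Glivenko identities (Corollary~\ref{psh-30-10}, Proposition~\ref{psh-30}) and the characterisations in Theorem~\ref{psh-waj-10}. An alternative that keeps the computation inside $A$ is to prove $(ff_1)$ directly by showing $(y\ra x)\ra\big((x\vee_1 y)\ra x\big)\in\Den(A)\subseteq F$: rewrite it by $(A_3)$ as $\big((y\ra x)\odot(x\vee_1 y)\big)\ra x$, use Corollary~\ref{psh-30-10} to reduce to the inequality $(y\ra x)\odot(x\vee_1 y)\le x^{-\sim}$, establish that inequality, and then conclude with the deductive-system property; this route faces the same essential difficulty, since that inequality again rests on the Wajsberg behaviour of the involutive quotient.
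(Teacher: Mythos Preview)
Your reduction is clean and the auxiliary pieces (Propositions~\ref{psh-if-20}, \ref{psh-if-30}, the preimage argument for fantastic filters) are correct, but the proof is genuinely incomplete at exactly the point you flag: you never prove that $A/\Den(A)$ is Wajsberg, and this is not a detail one can wave through with ``Glivenko identities and Theorem~\ref{psh-waj-10}''. Observe what your claim entails when $A$ is itself an involutive bounded pseudo-hoop: then $\Den(A)=\{1\}$ and $A/\Den(A)\cong A$, so your ``upgrade'' is the statement that \emph{every} involutive bounded pseudo-hoop is Wajsberg. That statement is indeed true, but in this paper it is a \emph{corollary} of Theorem~\ref{psh-ff-50}: once $\mathcal F_f(A)=\mathcal F_i(A)$, Corollary~\ref{psh-if-70-20} gives $\{1\}\in\mathcal F_i(A)=\mathcal F_f(A)$, and Theorem~\ref{psh-ff-20-10} then yields that $A$ is Wajsberg. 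So your argument, as it stands, is circular: the missing step is equipotent with the theorem you are trying to prove (it is the special case $F=\Den(A)$, and via Theorem~\ref{psh-ff-30} that case already encodes the full Wajsberg property of the quotient). Your parenthetical ``equivalently, $\Inv(A)$ is a bounded Wajsberg pseudo-hoop'' is also unjustified here, since Propositions~\ref{psh-waj-100} and~\ref{psh-if-20-20} require $A$ to be \emph{normal}, not merely good.

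The paper avoids this circularity entirely by a direct element computation: starting from $y\ra x\in F$, it passes to $x^{-}\rs y^{-}\in F$, rewrites $x^{-}\rs y^{-}$ as $x^{-}\rs\big(y^{-}\odot(x\ra y^{-\sim})\big)$ via the meet formula, then uses the involutive-filter hypothesis twice (once through $(x\ra y)^{-\sim}\rs(x\ra y)\in F$ and once through $x^{-\sim}\ra x\in F$), together with the Glivenko-type identities of Propositions~\ref{psh-20-10} and~\ref{psh-30}, to climb step by step to $((x\ra y)\rs y)\ra x\in F$. There is no appeal to any structural fact about the quotient. If you want to salvage your route, you must supply an \emph{independent} proof that an involutive bounded pseudo-hoop satisfies $(W_1)$ and $(W_2)$; until that is done, the proposal is a restatement of the difficulty rather than a proof.
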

\begin{proof}
According to Proposition \ref{psh-ff-40}, ${\mathcal F}_f(A)\subseteq {\mathcal F}_i(A)$. \\
Conversely, let $F\in {\mathcal F}_i(A)$, that is $x^{-\sim}\ra x, x^{\sim-}\rs x \in F$, for all $x\in A$. \\
Consider $x, y\in A$ such that $y\ra x\in F$. By Proposition \ref{psh-20-10}$(6)$, $y\ra x\le x^{-}\rs y^{-}$, 
hence $x^{-}\rs y^{-}\in F$. 
Applying Proposition \ref{psh-20}$(8)$,$(2)$,$(10)$ we have: \\ 
$\hspace*{2cm}$ $x^{-}\rs y^{-}=x^{-}\rs (x^{-}\wedge y^{-})$ \\
$\hspace*{3.6cm}$ $=x^{-}\rs (y^{-}\odot (y^{-}\rs x^{-}))$ \\
$\hspace*{3.6cm}$ $=x^{-}\rs (y^{-}\odot (y^{-}\rs (x\ra 0)))$ \\
$\hspace*{3.6cm}$ $=x^{-}\rs (y^{-}\odot (x\ra (y^{-}\rs 0)))$ \\
$\hspace*{3.6cm}$ $=x^{-}\rs (y^{-}\odot (x\ra y^{-\sim}))$. \\
It follows that $y\ra x\le x^{-}\rs (y^{-}\odot (x\ra y^{-\sim}))$, hence 
$x^{-}\rs (y^{-}\odot (x\ra y^{-\sim}))\in F$. \\
Using Propositions \ref{psh-20}$(11)$,$(12)$, \ref{psh-20-10}$(4)$ and \ref{psh-30}$(2)$ we get: \\
$\hspace*{0.5cm}$ $(x^{-}\rs (y^{-}\odot (x\ra y^{-\sim})))\rs (x^{-}\rs (y^{-}\odot (x\ra y)))$ \\ 
$\hspace*{5.1cm}$ $\ge (y^{-}\odot (x\ra y^{-\sim}))\rs (y^{-}\odot (x\ra y))$ \\  
$\hspace*{5.1cm}$ $\ge (x\ra y^{-\sim})\rs (x\ra y)$ \\
$\hspace*{5.1cm}$ $=(x^{-\sim}\ra y^{-\sim})\rs (x\ra y)$ \\
$\hspace*{5.1cm}$ $=(x\ra y)^{-\sim}\rs (x\ra y)$. \\ 
Since $F$ is involutive, $(x\ra y)^{-\sim}\rs (x\ra y)\in F$, hence \\ 
$\hspace*{2cm}$ $(x^{-}\rs (y^{-}\odot (x\ra y^{-\sim})))\rs (x^{-}\rs (y^{-}\odot (x\ra y)))\in F$. \\
It follows that $x^{-}\rs (y^{-}\odot (x\ra y))\in F$. 
Now, by Proposition \ref{psh-20-10}$(6)$,$(5)$ we have: \\
$\hspace*{1cm}$ $x^{-}\rs (y^{-}\odot (x\ra y))\le ((y^{-}\odot (x\ra y))^{\sim}\ra x^{-\sim} 
                 =((x\ra y)\rs y^{-\sim})\ra x^{-\sim}$. \\
Thus $((x\ra y)\rs y^{-\sim})\ra x^{-\sim}\in F$. \\
Applying Proposition \ref{psh-20}$(5)$, \ref{psh-20-10}$(4)$ and \ref{psh-30}$(2)$ we get: \\ 
$\hspace*{1cm}$ $(((x\ra y)\rs y^{-\sim})\ra x^{-\sim})\rs (((x\ra y)\rs y)\ra x^{-\sim})$ \\
$\hspace*{5.7cm}$ $\ge ((x\ra y)\rs y)\ra ((x\ra y)\rs y^{-\sim})$ \\
$\hspace*{5.7cm}$ $=((x\ra y)\rs y)\ra ((x\ra y)^{-\sim}\rs y^{-\sim})$ \\
$\hspace*{5.7cm}$ $=((x\ra y)\rs y)\ra ((x\ra y)\rs y)^{-\sim}=1\in F$. \\
It follows that $((x\ra y)\rs y)\ra x^{-\sim}\in F$. 
Finally, by Proposition \ref{psh-20}$(11)$ we get: \\
$\hspace*{1cm}$ $(((x\ra y)\rs y)\ra x^{-\sim})\ra (((x\ra y)\rs y)\ra x)\ge x^{-\sim}\ra x\in F$. \\ 
Hence $((x\ra y)\rs y)\ra x =x\vee_1 y\ra x\in F$. 
Similarly, $y\rs x\in F$ implies $x\vee_2 y\rs x\in F$. \\ 
It follows that $F\in {\mathcal F}_f(A)$, that is ${\mathcal F}_i(A\subseteq {\mathcal F}_f(A)$. 
We conclude that ${\mathcal F}_f(A)= {\mathcal F}_i(A)$.
\end{proof}

\begin{cor} \label{psh-ff-60} If $A$ is a bounded Wajsberg pseudo-hoop, then ${\mathcal F}_f(A)= {\mathcal F}_i(A)$. 
\end{cor}

\begin{cor} \label{psh-ff-60-10} If $A$ is a bounded hoop, then ${\mathcal F}_f(A)= {\mathcal F}_i(A)$. 
\end{cor}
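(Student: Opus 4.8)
The plan is to derive this immediately from Theorem \ref{psh-ff-50}, which already establishes ${\mathcal F}_f(A)={\mathcal F}_i(A)$ for every good pseudo-hoop. So the only thing to check is that a bounded hoop is automatically good.

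First I would recall that a hoop is exactly a pseudo-hoop in which $\odot$ is commutative, equivalently in which $\ra \:=\: \rs$. Hence, for a bounded hoop $A$ and any $x\in A$, the two negations coincide: $x^- = x\ra 0 = x\rs 0 = x^{\sim}$. Consequently $x^{-\sim} = (x^-)^{\sim} = (x^{\sim})^- = x^{\sim-}$ for all $x\in A$, which is precisely the defining identity of a good pseudo-hoop. Therefore $A$ is a good pseudo-hoop, and Theorem \ref{psh-ff-50} applies directly to give ${\mathcal F}_f(A)= {\mathcal F}_i(A)$.

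There is essentially no obstacle here; the statement is a one-line specialization. The only point to be careful about is making explicit the (routine) observation that commutativity of $\odot$ forces $^-$ and $^{\sim}$ to agree and hence forces goodness — everything else is inherited from the already-proved Theorem \ref{psh-ff-50}.
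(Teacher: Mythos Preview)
Your proposal is correct and matches the paper's intent: the corollary is stated without proof immediately after Theorem~\ref{psh-ff-50}, so it is meant to follow by noting that a bounded hoop, having $\ra=\rs$ and hence $^-=^{\sim}$, is automatically good. Your explicit verification of goodness is exactly the missing routine step.
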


\begin{Def} \label{psh-bf-10} Let $(A,\ra,\rs,0,1)$ be a bounded Wajsberg pseudo-hoop and let $F\in {\mathcal F}(A)$. 
Then $F$ is called a \emph{Boolean} filter if $x\vee x^{-}, x\vee x^{\sim}\in F$, for all $x\in A$. 
\end{Def}

We will denote by ${\mathcal F}_B(A)$ the set of all Boolean filters of a bounded Wajsberg pseudo-hoop $A$. 

\begin{prop} \label{psh-bf-20} If $A$ is a bounded Wajsberg pseudo-hoop, then 
${\mathcal F}_B(A)\subseteq {\mathcal F}_i(A)$. 
\end{prop}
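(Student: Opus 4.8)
The plan is to notice that the hypothesis already forces $A$ itself to be involutive, after which the stated inclusion is immediate. Recall from the discussion following Definition \ref{psh-inv-10-20} that, since $A$ is a bounded Wajsberg pseudo-hoop, putting $y=0$ in $(W_1)$ and $(W_2)$ makes $A$ an involutive pseudo-hoop. Spelled out: for every $x\in A$ one has $x^{-\sim}=x\vee_1 0=0\vee_1 x=(0\ra x)\rs x=1\rs x=x$ and, dually, $x^{\sim-}=x\vee_2 0=0\vee_2 x=(0\rs x)\ra x=1\ra x=x$, using that $0\le x$ gives $0\ra x=0\rs x=1$.

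Next I would simply apply this to an arbitrary filter $F\in {\mathcal F}(A)$: for each $x\in A$ we get $x^{-\sim}\ra x=x\ra x=1\in F$ and $x^{\sim-}\rs x=x\rs x=1\in F$, so $F\in {\mathcal F}_i(A)$. Thus ${\mathcal F}(A)={\mathcal F}_i(A)$, exactly as recorded in Corollary \ref{psh-if-70-20}. Since every $F\in {\mathcal F}_B(A)$ is in particular an element of ${\mathcal F}(A)$, we conclude ${\mathcal F}_B(A)\subseteq {\mathcal F}(A)={\mathcal F}_i(A)$, which is the claim.

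There is essentially no obstacle: the defining condition of a Boolean filter (that $x\vee x^-$ and $x\vee x^\sim$ lie in $F$) is not needed for the conclusion, which in fact holds for \emph{every} filter of a bounded Wajsberg pseudo-hoop. The only care required is to invoke the correct earlier facts, namely that a bounded Wajsberg pseudo-hoop is involutive and that for an involutive bounded pseudo-hoop ${\mathcal F}(A)={\mathcal F}_i(A)$ (Corollary \ref{psh-if-70-20}); if one prefers a self-contained argument, the short computation of $x^{-\sim}$ and $x^{\sim-}$ above can be inserted directly in place of citing Corollary \ref{psh-if-70-20}.
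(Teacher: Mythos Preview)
Your proof is correct, and in fact simpler than the paper's. The paper argues by actually using the Boolean condition: from $x^{-\sim}=x^{-}\rs 0\le x^{-}\rs x$ one gets $x\vee x^{-}=x^{-}\vee_2 x=(x^{-}\rs x)\ra x\le x^{-\sim}\ra x$, and since $x\vee x^{-}\in F$ this forces $x^{-\sim}\ra x\in F$; the other implication is handled dually. Your route bypasses all of this by observing (as the paper itself records after Definition~\ref{psh-inv-10-20} and again after Definition~\ref{psh-if-10}) that a bounded Wajsberg pseudo-hoop is involutive, so ${\mathcal F}(A)={\mathcal F}_i(A)$ already, making the Boolean hypothesis superfluous. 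What the paper's argument buys is a template that would still say something in a setting where $A$ is not automatically involutive (one sees exactly how the element $x\vee x^{-}$ dominates the obstruction $x^{-\sim}\ra x$); what your argument buys is brevity and the honest observation that, as stated, the proposition is a special case of an earlier, stronger remark.
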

\begin{proof}
Let $F\in {\mathcal F}_B(A)$ and $x\in A$, that is $x\vee x^{-}, x\vee x^{\sim}\in F$. 
From $x^{-\sim}=x^{-}\rs 0\le x^{-}\rs x$, we get 
$x\vee x^{-}=x^{-}\vee x=x^{-}\vee_2 x=(x^{-}\rs x)\ra x\le x^{-\sim}\ra x$. Hence $x^{-\sim}\ra x\in F$. 
Similarly $x^{\sim-}\rs x\in F$, thus $F\in {\mathcal F}_i(A)$. 
We conclude that ${\mathcal F}_B(A)\subseteq {\mathcal F}_i(A)$.
\end{proof}

\begin{ex} \label{psh-ff-70} Consider the set $A=\{0,a,b,c,1\}$ and the operations $\odot,\ra$ given by 
the following tables:
\[
\hspace{10mm}
\begin{array}{c|ccccccc}
\odot & 0 & a & b & c & 1 \\ \hline
0 & 0 & 0 & 0 & 0 & 0 \\ 
a & 0 & a & 0 & a & a \\ 
b & 0 & 0 & b & b & b \\ 
c & 0 & a & b & c & c \\ 
1 & 0 & a & b & c & 1 
\end{array}
\hspace{10mm} 
\begin{array}{c|ccccccc}
\ra & 0 & a & b & c & 1 \\ \hline
0 & 1 & 1 & 1 & 1 & 1 \\ 
a & b & 1 & b & 1 & 1 \\ 
b & a & a & 1 & 1 & 1 \\ 
c & 0 & a & b & 1 & 1 \\ 
1 & 0 & a & b & c & 1 
\end{array}
. 
\]
Then $(A,\odot,\ra,0,1)$ is a bounded hoop. 
Since $0\vee c=c\neq 1=c\vee 0$, it follows that $A$ is not a Wajsberg hoop. 
One can see that: \\
$\hspace*{2cm}$
${\mathcal F}(A)={\mathcal F}_n(A)=\{\{1\},\{c,1\},\{a,c,1\},\{b,c,1\},\{a,b,c,1\},A\}$, \\
$\hspace*{2cm}$
${\mathcal F}_f(A)={\mathcal F}_i(A)=\{\{c,1\},\{a,c,1\},\{b,c,1\},\{a,b,c,1\},A\}$.
\end{ex}

\begin{ex} \label{psh-ff-80} Consider the set $A=\{0,a,b,c,1\}$ and the operations $\odot,\ra$ given by 
the following tables:
\[
\hspace{10mm}
\begin{array}{c|ccccccc}
\odot & 0 & a & b & c & 1 \\ \hline
0 & 0 & 0 & 0 & 0 & 0 \\ 
a & 0 & 0 & 0 & 0 & a \\ 
b & 0 & 0 & 0 & a & b \\ 
c & 0 & 0 & a & b & c \\ 
1 & 0 & a & b & c & 1 
\end{array}
\hspace{10mm} 
\begin{array}{c|ccccccc}
\ra & 0 & a & b & c & 1 \\ \hline
0 & 1 & 1 & 1 & 1 & 1 \\ 
a & c & 1 & 1 & 1 & 1 \\ 
b & b & c & 1 & 1 & 1 \\ 
c & a & b & c & 1 & 1 \\ 
1 & 0 & a & b & c & 1 
\end{array}
. 
\]
Then $(A,\odot,\ra,0,1)$ is a bounded Wajsberg hoop, isomorphic to the hoop $(C_4,\odot,\ra,a^4,1)$ from 
Example \ref{psh-waj-10-40-10}. Then we have  
${\mathcal F}(A)={\mathcal F}_f(A)={\mathcal F}_i(A)=\{\{1\},A\}$ and ${\mathcal F}_B(A)=\{A\}$.
\end{ex}

$\vspace*{5mm}$

\section{State pseudo-hoops}

In this section we introduce the notion of state operators on pseudo-hoops. 
We define three types of state operators on pseudo-hoops: type I and type II as generalization of 
state operators on hoops from \cite{Bor2}, and type III as generalization of state operators on bounded pseudo-hoops from \cite{Ciu22}, \cite{Ciu23}. 
We prove that a pseudo-hoop is Wajsberg if and only if the type I and type II state operators coincide. 
For the case of a bounded pseudo-hoop it is proved that the kernel of a type II state operator is an involutive filter. Moreover, for a bounded Wajsberg pseudo-hoop the kernel of any type of state operator is an involutive filter. 
As main results we show that any type II state operator is a type III state operator, and in the case of bounded 
Wajsberg pseudo-hoops any type I state operator is a type III state operator. 
If the kernel of a type II state operator $\mu$ is a normal filter, then it is proved that $\mu$ is also a type I 
state operator. 

\begin{Def} \label{psh-st-10} Let $(A,\odot,\ra,\rs,1)$ be a pseudo-hoop and $\mu:A \longrightarrow A$ be a unary operator on $A$. For all $x, y\in A$ consider the following axioms:\\
$(IS_1)$ $\mu(x\ra y)=\mu(x\vee_1 y)\ra \mu(y)$ and $\mu(x\rs y)=\mu(x\vee_2 y)\rs \mu(y),$ \\
$(IS^{'}_1)$ $\mu(x\ra y)=\mu(y\vee_1 x)\ra \mu(y)$ and $\mu(x\rs y)=\mu(y\vee_2 x)\rs \mu(y),$ \\ 
$(IS^{''}_1)$ $\mu(x\ra y)=\mu(x)\ra \mu(x\wedge y)$ and $\mu(x\rs y)=\mu(x)\rs \mu(x\wedge y),$ \\ 
$(IS_2)$ $\mu(x\odot y)=\mu(x)\odot \mu(x\rs x\odot y)=\mu(y\ra x\odot y)\odot \mu(y),$ \\
$(IS_3)$ $\mu(\mu(x)\odot \mu(y))=\mu(x)\odot \mu(y),$ \\
$(IS_4)$ $\mu(\mu(x)\ra \mu(y))=\mu(x)\ra \mu(y)$ and 
         $\mu(\mu(x)\rs \mu(y))=\mu(x)\rs \mu(y)$. \\
Then: \\
$(i)$ $\mu$ is called an \emph{internal state of type I} or a \emph{state operator of type I} or 
a \emph{type I state operator} if it satisfies axioms $(IS_1)$, $(IS_2)$, $(IS_3)$, $(IS_4);$ \\    
$(ii)$ $\mu$ is called an \emph{internal state of type II} or a \emph{state operator of type II} or a 
\emph{type II state operator} if it satisfies axioms $(IS^{'}_1)$, $(IS_2)$, $(IS_3)$, $(IS_4)$. \\
$(iii)$ $\mu$ is called an \emph{internal state of type III} or a \emph{state operator of type III} or a 
\emph{type III state operator} if it satisfies axioms $(IS^{''}_1)$, $(IS_2)$, $(IS_3)$, $(IS_4)$. \\
The structure $(A, \odot, \ra, \rs, \mu, 1)$ ($(A,\mu)$, for short) is called a 
\emph{state pseudo-hoop of type I (type II, type III) state pseudo-hoop}, respectively.
\end{Def}

Denote $\mathcal{IS}^{(I)}(A)$, $\mathcal{IS}^{(II)}(A)$, and $\mathcal{IS}^{(III)}(A)$ the set of all internal states of type I, II and III on a pseudo-hoop $A$, respectively. For a bounded pseudo-hoop $(A,\odot,\ra,\rs,0,1)$ we denote by $\mathcal{IS}^{(I)}_1(A)$, $\mathcal{IS}^{(II)}_1(A)$, and $\mathcal{IS}^{(III)}_1(A)$ the set of all internal states $\mu$ from $\mathcal{IS}^{(I)}(A)$, $\mathcal{IS}^{(II)}(A)$, and $\mathcal{IS}^{(III)}(A)$ such that $\mu(0)=0$, respectively. \\ 
For an internal state $\mu$, $\Ker(\mu)=\{x\in A \mid \mu(x)=1\}$  is called the \emph{kernel} of $\mu$. \\
An internal state is called \emph{faithful} if $\Ker(\mu)=1$. 

\begin{ex} \label{psh-st-20} Let $(A, \odot, \ra, \rs, 1)$ be a pseudo-hoop and 
$1_A, Id_A:A\longrightarrow A$, defined by $1_A(x)=1$ and $Id_A(x)=x$ for all $x\in A$. 
Obviously $1_A\in \mathcal{IS}^{(I)}(A)\cap \mathcal{IS}^{(II)}(A)\cap \mathcal{IS}^{(III)}(A)$. 
By Propositions \ref{psh-90}, \ref{psh-20-05-10}, and \ref{psh-20}$(8)$,  
$Id_A\in \mathcal{IS}^{(I)}(A)\cap \mathcal{IS}^{(III)}(A)$. 
\end{ex}

\begin{ex} \label{psh-st-30} Let $(A_1, \odot_1, \ra_1, \rs_1, 1_1)$ and 
$(A_2, \odot_2, \ra_2, \rs_2, 1_2)$ be two pseudo-hoops.
Denote $A=A_1 \times A_2=\{(x_1,x_2) \mid x_1\in A_1, x_2\in A_2\}$ and for all $(x_1, x_2), (y_1, y_2)\in A$, 
define the operations $\ra, \rs, 1$ as follows: 
$(x_1, x_2)\odot (y_1, y_2)=(x_1\odot_1 y_1, x_2\odot_2 y_2)$,
$(x_1, x_2)\ra (y_1, y_2)=(x_1\ra_1 y_1, x_2\ra_2 y_2)$, 
$(x_1, x_2)\rs (y_1, y_2)=(x_1\rs_1 y_1, x_2\rs_2 y_2)$, 
$1=(1_1, 1_2)$. 
Obviously $(A, \odot, \ra, \rs, 1)$ is a pseudo-hoop. 
Consider $\mu_1\in \mathcal{IS}^{(I)}(A_1)$, $\mu_2\in \mathcal{IS}^{(I)}(A_2)$ and define the map $\mu:A\longrightarrow A$ by $\mu((x,y))=(\mu_1(x),\mu_2(x))$. Then $\mu\in \mathcal{IS}^{(I)}(A)$.  
Similarly if $\mu_1$ and $\mu_2$ are type II or type III internal states.
\end{ex}

\begin{prop} \label{psh-st-40} A pseudo-hoop $A$ is a Wajsberg pseudo-hoop if and only if $\mathcal{IS}^{(I)}(A)=\mathcal{IS}^{(II)}(A)$. 
\end{prop}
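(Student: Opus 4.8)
The plan is to prove the two implications separately, exploiting the fact that the only difference between type~I and type~II state operators lies in the first axiom: $(IS_1)$ uses $\mu(x\vee_1 y)$, while $(IS_1')$ uses $\mu(y\vee_1 x)$ (and analogously for $\vee_2$). So the whole statement reduces to understanding when $\mu(x\vee_1 y)=\mu(y\vee_1 x)$ and $\mu(x\vee_2 y)=\mu(y\vee_2 x)$ for all $\mu$ in the relevant class and all $x,y\in A$.

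For the ``if'' direction, suppose $A$ is Wajsberg. Then by Definition~\ref{psh-inv-10} we have $x\vee_1 y=y\vee_1 x$ and $x\vee_2 y=y\vee_2 x$ as identities in $A$, so $(IS_1)$ and $(IS_1')$ are literally the same condition; since $(IS_2),(IS_3),(IS_4)$ are shared, $\mathcal{IS}^{(I)}(A)=\mathcal{IS}^{(II)}(A)$ follows immediately. The only point to check is that this equality of sets is genuine (both inclusions), which is trivial here.

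For the ``only if'' direction, I would argue contrapositively: assume $A$ is \emph{not} Wajsberg and exhibit a state operator lying in one of the two classes but not the other. The natural candidate is $\mathrm{Id}_A$. By Example~\ref{psh-st-20}, $\mathrm{Id}_A\in\mathcal{IS}^{(I)}(A)$ for every pseudo-hoop $A$ (this uses Proposition~\ref{psh-90} to verify $(IS_1)$, Proposition~\ref{psh-20-05-10} for $(IS_2)$, and $(IS_3),(IS_4)$ are automatic since $\mathrm{Id}_A$ is the identity). Now $\mathrm{Id}_A\in\mathcal{IS}^{(II)}(A)$ would force $(IS_1')$, i.e. $x\ra y=(y\vee_1 x)\ra y$ and $x\rs y=(y\vee_2 x)\rs y$ for all $x,y$. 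I would then show that these two identities are equivalent to $A$ being Wajsberg, using Theorem~\ref{psh-waj-10}: indeed, part~(b) of that theorem characterizes Wajsberg pseudo-hoops by $x\ra y=(y\vee_1 x)\ra y$ and $x\rs y=(y\vee_2 x)\rs y$ — precisely the conditions $(IS_1')$ imposes on $\mathrm{Id}_A$. Hence if $A$ is not Wajsberg, $\mathrm{Id}_A$ witnesses $\mathcal{IS}^{(I)}(A)\neq\mathcal{IS}^{(II)}(A)$, completing the contrapositive.

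The only mild subtlety — and the step I would be most careful about — is confirming that $\mathrm{Id}_A\in\mathcal{IS}^{(I)}(A)$ genuinely holds for \emph{every} pseudo-hoop, since the whole argument hinges on having a concrete operator in $\mathcal{IS}^{(I)}(A)$ to test. That is asserted in Example~\ref{psh-st-20}, so I would simply cite it; the Wajsberg characterization in Theorem~\ref{psh-waj-10}(b) then does the rest, and no lengthy computation is needed. One should also note that the ``if'' direction alone does not show the two classes differ when $A$ is non-Wajsberg; that is exactly why the $\mathrm{Id}_A$ witness is essential rather than cosmetic.
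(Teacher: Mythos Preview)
Your proposal is correct and follows essentially the same approach as the paper: for the forward direction you use that Wajsberg means $x\vee_1 y=y\vee_1 x$ and $x\vee_2 y=y\vee_2 x$, making $(IS_1)$ and $(IS_1')$ identical; for the converse you use $\mathrm{Id}_A\in\mathcal{IS}^{(I)}(A)$ (Example~\ref{psh-st-20}), deduce $\mathrm{Id}_A\in\mathcal{IS}^{(II)}(A)$, read off the identities $x\ra y=(y\vee_1 x)\ra y$ and $x\rs y=(y\vee_2 x)\rs y$, and invoke Theorem~\ref{psh-waj-10}(b). The paper does exactly this, though it phrases the converse directly rather than contrapositively.
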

\begin{proof}
It is clear that if $A$ is a Wajsberg pseudo-hoop then $\mathcal{IS}^{(I)}(A)=\mathcal{IS}^{(II)}(A)$. 
Conversely, suppose that $\mathcal{IS}^{(I)}(A)=\mathcal{IS}^{(II)}(A)$. 
Since $Id_A\in \mathcal{IS}^{(I)}(A)$, we have $Id_A\in \mathcal{IS}^{(II)}(A)$, so 
$x\ra y=y\vee_1 x\ra y$ and $x\rs y=y\vee_2 x\rs y$, for all $x, y\in A$.  
According to Theorem \ref{psh-waj-10}, it follows that $A$ is a Wajsberg pseudo-hoop. 
\end{proof}

\begin{prop} \label{psh-st-50} Let $(A, \odot, \ra, \rs, \mu, 1)$ be a type I, type II or a type III state 
pseudo-hoop. Then the following hold:\\ 
$(1)$ $\mu(1)=1;$ \\
$(2)$ if $x\le y$, then $\mu(x)\le \mu(y);$ \\
$(3)$ $\mu(\mu(x))=\mu(x)$, for all $x\in A;$ \\
$(4)$ $\mu(x\ra y)\le \mu(x)\ra \mu(y)$ and 
      $\mu(x\rs y)\le \mu(x)\rs \mu(y)$, for all $x, y\in A;$ \\
$(5)$ $\mu(x)\odot \mu(y)\le \mu(x \odot y)$ and $\mu(x)\odot \mu(y)\le \mu(x \wedge y)$, for all $x, y\in A;$ \\      
$(6)$ $\Ker(\mu)\in {\mathcal F}(A);$ \\
$(7)$ $\Img(\mu)$ is a subalgebra of $A;$ \\ 
$(8)$ $\Img(\mu)=\{x\in A \mid x=\mu(x)\};$ \\
$(9)$ $\Ker(\mu)\cap \Img(\mu)=\{1\}$. 
\end{prop}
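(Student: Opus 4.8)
The plan is to prove all nine items inside the framework common to the three types, appealing to the type-specific first axiom only where a statement genuinely depends on the type (in effect, only for item $(4)$). First, putting $x=y=1$ into whichever of $(IS_1)$, $(IS^{'}_1)$, $(IS^{''}_1)$ is available gives $\mu(1)=\mu(1)\ra\mu(1)=1$, using $1\vee_1 1=1$ (Proposition \ref{psh-80}(3)), $1\wedge 1=1$ and $(A_2)$; this is $(1)$. Taking $y=1$ in $(IS_3)$ and using $(1)$ yields $\mu(\mu(x))=\mu(\mu(x)\odot 1)=\mu(x)$, which is $(3)$; and $(8)$ is then immediate, because $x=\mu(a)$ for some $a$ forces $\mu(x)=\mu(\mu(a))=\mu(a)=x$ by $(3)$, while the reverse inclusion is trivial.

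The heart of the argument is $(2)$. The key observation is that if $a\le b$, then by Proposition \ref{psh-20}(2) and commutativity of $\wedge$ one has $a=a\wedge b=b\wedge a=(b\ra a)\odot b$; feeding this into the second identity of $(IS_2)$ with $x:=b\ra a$, $y:=b$ gives $\mu(a)=\mu\bigl(b\ra((b\ra a)\odot b)\bigr)\odot\mu(b)=\mu(b\ra a)\odot\mu(b)$, and since $u\odot v\le v$ in any pseudo-hoop (Proposition \ref{psh-20}(4)) we conclude $\mu(a)\le\mu(b)$. This uses only $(IS_2)$, hence is uniform across the three types.

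With $(2)$ in hand the remaining items become bookkeeping. For $(4)$ one argues by type: in the type I case $(IS_1)$ gives $\mu(x\ra y)=\mu(x\vee_1 y)\ra\mu(y)$, and since $x\le x\vee_1 y$ (Proposition \ref{psh-80}(5)) we get $\mu(x)\le\mu(x\vee_1 y)$ by $(2)$, whence $\mu(x\vee_1 y)\ra\mu(y)\le\mu(x)\ra\mu(y)$ by Proposition \ref{psh-20}(6),(7); the type II case replaces $x\vee_1 y$ by $y\vee_1 x$, the type III case uses $x\wedge y\le y$, and the $\rs$-halves are symmetric. For $(5)$, the first identity of $(IS_2)$ gives $\mu(x\odot y)=\mu(x)\odot\mu(x\rs x\odot y)\ge\mu(x)\odot\mu(y)$, using $y\le x\rs x\odot y$ (Proposition \ref{psh-20}(9)), then $(2)$ and Proposition \ref{psh-20}(13); composing with $\mu(x\odot y)\le\mu(x\wedge y)$ (which is $(2)$ applied to Proposition \ref{psh-20}(4)) gives the second inequality. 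Then $(6)$ follows: $1\in\Ker(\mu)$ by $(1)$; $(F_1)$ holds since $\mu(x)=\mu(y)=1$ forces $1\le\mu(x\odot y)\le 1$ by $(5)$; and $(F_2)$ holds by $(2)$. For $(7)$ I would use $(IS_3)$ and $(IS_4)$ directly on image elements $u=\mu(a)$, $v=\mu(b)$: $\mu(u\odot v)=\mu(\mu(a)\odot\mu(b))=u\odot v$, and likewise $u\ra v,u\rs v\in\Img(\mu)$ by $(IS_4)$, while $1=\mu(1)\in\Img(\mu)$ by $(1)$. Finally $(9)$: any $x\in\Ker(\mu)\cap\Img(\mu)$ satisfies $x=\mu(x)=1$ by $(8)$ and the definition of $\Ker$, and conversely $1$ clearly lies in both.

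The only step I expect to be genuinely delicate is $(2)$: it is not evident a priori that $(IS_2)$ alone forces monotonicity, and the device of rewriting $a$ as $(b\ra a)\odot b$ and pushing it through the \emph{second} form of $(IS_2)$ is exactly what lets one avoid invoking a type-specific axiom. A minor point of care is the ordering of the items, since $(7)$ and $(9)$ as listed precede $(8)$; I would therefore either prove $(8)$ (equivalently $(3)$) early, as above, or phrase $(7)$ and $(9)$ directly through representatives of the image so as not to create a circular reference.
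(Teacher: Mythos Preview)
Your proof is correct and follows essentially the same route as the paper's: both extract $(1)$ from the first axiom at $x=y=1$, derive monotonicity $(2)$ from pseudo-divisibility plus $(IS_2)$ (you use the second form $(b\ra a)\odot b$, the paper the dual $y\odot(y\rs x)$), and then obtain $(4)$--$(9)$ by the same short arguments. The only cosmetic differences are that you obtain $(3)$ from $(IS_3)$ with $y=1$ whereas the paper uses $(IS_4)$, and you verify $(6)$ via the filter axioms $(F_1),(F_2)$ whereas the paper uses the equivalent deductive-system form; neither changes the substance.
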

\begin{proof}
$(1)$ From $(IS_2)$, $(IS^{'}_2)$ and $(IS^{''}_2)$ for $x=y=1$ we get $\mu(1)=\mu(1)\ra \mu(1)=1$. \\
$(2)$ By Proposition \ref{psh-20}$(2)$, $x=y\wedge x=(y\ra x)\odot y=y\odot(y\rs x)$. 
Applying $(IS_2)$ we get $\mu(x)=\mu(y\odot (y\rs x))=\mu(y)\odot \mu(y\rs y\odot (y\rs x))\le \mu(y)$. \\
$(3)$ Applying $(1)$ and $(IS_3)$ we have: \\
$\hspace*{1cm}$
$\mu(\mu(x))=\mu(1\ra \mu(x))=\mu(\mu(1)\ra \mu(x))=\mu(1)\ra \mu(x)=1\ra \mu(x)=\mu(x)$. \\
$(4)$ If $\mu\in \mathcal{IS}^{(I)}(A)$ then from $x\le (x\ra y)\rs y$ we 
get $\mu(x)\le \mu((x\ra y)\rs y)$, so 
$\mu((x\ra y)\rs y)\ra \mu(y)\le \mu(x)\ra \mu(y)$, that is $\mu(x\ra y)\le \mu(x)\ra \mu(y)$. \\
Similarly $\mu(x\rs y)\le \mu(x)\rs \mu(y)$. \\
If $\mu\in \mathcal{IS}^{(II)}(A)$ then from $x\le (y\ra x)\rs x$ we have 
$\mu(x)\le \mu((y\ra x)\rs x)$ and in a similar way we get 
$\mu(x\ra y)\le \mu(x)\ra \mu(y)$ and $\mu(x\rs y)\le \mu(x)\rs \mu(y)$. \\
For $\mu\in \mathcal{IS}^{(III)}(A)$, since $\mu(x\wedge y)\le \mu(y)$, we have: \\  
$\hspace*{2cm}$ $\mu(x\ra y)=\mu(x)\ra \mu(x\wedge y)\le \mu(x)\ra \mu(y)$ and \\
$\hspace*{2cm}$ $\mu(x\rs y)=\mu(x)\ra \mu(x\wedge y)\le \mu(x)\rs \mu(y)$. \\
$(5)$ From $y\le x\rs x\odot y$ we get $\mu(y)\le \mu(x\rs x\odot y)$, so that: \\
$\hspace*{2cm}$ $\mu(x)\odot \mu(y)\le \mu(x)\odot \mu(x\rs x\odot y)=\mu(x\odot y)$. \\
Since $x\odot y\le x\wedge y$, we have $\mu(x)\odot \mu(y)\le \mu(x\odot y)\le \mu(x\wedge y)$. \\
$(6)$ Consider $x, x\ra y\in \Ker(\mu)$, that is $\mu(x)=\mu(x\ra y)=1$. \\
Applying $(4)$ we have $1=\mu(x\ra y)\le \mu(x)\ra \mu(y)$, so $\mu(x)\ra \mu(y)=1$.  \\
It follows that $\mu(y)=1\ra \mu(y)=\mu(1)\ra \mu(y)=\mu(x)\ra \mu(y)=1$, 
hence  $y\in \Ker(\mu)$. \\
Since $1\in \Ker(\mu)$, it follows that $\Ker(\mu)\in {\mathcal F}(A)$. \\
$(7)$ Since $1=\mu(1)\in \Img(\mu)$, we have $1\in \Img(\mu)$. \\
If $x, y\in \Img(\mu)$ then from $\mu(x)\ra \mu(y)=\mu(\mu(x)\ra \mu(y))$, $\mu(x)\rs \mu(y)=\mu(\mu(x)\rs \mu(y))$ 
and $\mu(x)\odot \mu(y)=\mu(\mu(x)\odot \mu(y))$, it follows that 
$\mu(x)\ra \mu(y), \mu(x)\rs \mu(y), \mu(x)\odot \mu(y)\in \Img(\mu)$. 
Thus $\Img(\mu)$ is a subalgebra of $A$. \\
$(8)$ Clearly $\{x\in A \mid x=\mu(x)\} \subseteq \Img(\mu)$. 
Let $x\in \Img(\mu)$, that is there exists $x_1\in A$ such that $x=\mu(x_1)$. 
It follows that $x=\mu(x_1)=\mu(\mu(x_1))=\mu(x)$, that is $x\in \Img(\mu)$. 
Thus $\Img(\mu) \subseteq \{x\in A \mid x=\mu(x)\}$ and we  conclude that $\Img(\mu)=\{x\in A \mid x=\mu(x)\}$. \\
$(9)$ Let $y\in \Ker(\mu)\cap \Img(\mu)$, so $\mu(y)=1$ and there exists $x\in A$ such that $\mu(x)=y$. \\
It follows that $y=\mu(x)=\mu(\mu(x))=\mu(y)=1$, thus $\Ker(\mu)\cap \Img(\mu)=\{1\}$.                
\end{proof}

\begin{rem} \label{psh-st-50-05}
$(1)$ If $(A, \odot, \ra, 1)$ is a hoop, then the states operators of types I and II coincide with the state operators  
on hoops defined in \cite{Bor2}. We mention that in this definition the order-preserving condition is superfluous. \\ 
$(2)$ If $(A, \odot, \ra, \rs, 0, 1)$ is a bounded pseudo-hoop, then the type III state operator coincides with 
the state operator on bounded pseudo-hoops studied in \cite{Ciu22}.
\end{rem}

\begin{prop} \label{psh-st-50-10} Let $(A, \odot, \ra, \rs, \mu, 1)$ be a type I, type II or a type III state 
pseudo-hoop. Then the following hold for all $x, y\in A$:\\ 
$(1)$ $\mu(x\wedge y)=\mu(x)\odot \mu(x\rs y)=\mu(y\ra x)\odot \mu(y);$ \\
$(2)$ $\mu(\mu(x)\wedge \mu(x))=\mu(x)\wedge \mu(y);$ \\
$(3)$ $\mu(\mu(x)\vee_1 \mu(y))=\mu(x)\vee_1 \mu(y)$ and $\mu(\mu(x)\vee_2 \mu(y))=\mu(x)\vee_2 \mu(y);$ \\
$(4)$ $\mu(x\vee_1 y)\le \mu(x)\vee_1 \mu(y)$ and $\mu(x\vee_2 y)\le \mu(x)\vee_2 \mu(y);$ \\
$(5)$ if $A$ is a Wajsberg pseudo-hoop then $\mu(x\ra y)\rs \mu(y)=\mu(x\rs y)\ra \mu(y);$ \\
$(6)$ if $A$ is cancellative then $\mu(x\odot y)=\mu(x)\odot \mu(y)$.  
\end{prop}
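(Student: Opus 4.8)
The plan is to isolate one identity valid for all three types of state operator and to read the six items off from it together with $(IS_2)$--$(IS_4)$ and Proposition \ref{psh-st-50}. The identity I would establish first is
\[
\mu(x\vee_1 y)=\mu(x\ra y)\rs\mu(y),\qquad \mu(x\vee_2 y)=\mu(x\rs y)\ra\mu(y).
\]
Since $x\vee_1 y=(x\ra y)\rs y$, I apply the $\rs$-half of $(IS_1)$, $(IS^{'}_1)$ or $(IS^{''}_1)$ with first argument $x\ra y$ and second argument $y$. The auxiliary term collapses because $y\le x\ra y$ (Proposition \ref{psh-20}$(4)$): for type III one has $(x\ra y)\wedge y=y$; for type II, $y\vee_2(x\ra y)=x\ra y$ by Proposition \ref{psh-80}$(2)$; for type I, $(x\ra y)\vee_2 y=(x\vee_1 y)\ra y=x\ra y$ by Proposition \ref{psh-90}$(1)$. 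In each case the value reduces to $\mu(x\ra y)\rs\mu(y)$, and the $\vee_2$ identity is dual.

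For $(1)$ I would use the two forms $x\wedge y=x\odot(x\rs y)=(y\ra x)\odot y$ from Proposition \ref{psh-20}$(2)$ and apply the two halves of $(IS_2)$; the inner terms simplify via the absorption identities $x\rs(x\wedge y)=x\rs y$ and $y\ra(y\wedge x)=y\ra x$, each of which follows from $x\odot(x\rs y)=x\wedge y$ by adjunction (Proposition \ref{psh-20}$(3)$). Item $(2)$, whose left side should read $\mu(\mu(x)\wedge\mu(y))$, then follows from $(1)$: $\mu(\mu(x)\wedge\mu(y))=\mu(\mu(x))\odot\mu(\mu(x)\rs\mu(y))=\mu(x)\odot(\mu(x)\rs\mu(y))=\mu(x)\wedge\mu(y)$, using idempotency (Proposition \ref{psh-st-50}$(3)$) and $(IS_4)$. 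For $(3)$, both $\mu(x)$ and $\mu(y)$ lie in $\Img(\mu)$, which is a subalgebra closed under $\ra$ and $\rs$ (Proposition \ref{psh-st-50}$(7)$), so $(\mu(x)\ra\mu(y))\rs\mu(y)=\mu(x)\vee_1\mu(y)\in\Img(\mu)$ is fixed by $\mu$ (Proposition \ref{psh-st-50}$(8)$); the $\vee_2$ case is identical.

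Item $(5)$ is immediate from the reduction identity: if $A$ is Wajsberg then $x\vee_1 y=x\vee_2 y$, whence $\mu(x\ra y)\rs\mu(y)=\mu(x\vee_1 y)=\mu(x\vee_2 y)=\mu(x\rs y)\ra\mu(y)$. For $(6)$, cancellativity gives $x\rs(x\odot y)=y$ (Proposition \ref{psh-60-10}), so the first half of $(IS_2)$ collapses to $\mu(x\odot y)=\mu(x)\odot\mu(x\rs x\odot y)=\mu(x)\odot\mu(y)$.

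Item $(4)$ is the main obstacle, and it is here that the orientation must be watched. The reduction identity gives $\mu(x\vee_1 y)=\mu(x\ra y)\rs\mu(y)$, and Proposition \ref{psh-st-50}$(4)$ gives $\mu(x\ra y)\le\mu(x)\ra\mu(y)$. Feeding this into the map $a\mapsto a\rs\mu(y)$, which is order-\emph{reversing} in $a$ (Proposition \ref{psh-20}$(7)$), yields $(\mu(x)\ra\mu(y))\rs\mu(y)\le\mu(x\ra y)\rs\mu(y)$, that is $\mu(x)\vee_1\mu(y)\le\mu(x\vee_1 y)$, and dually $\mu(x)\vee_2\mu(y)\le\mu(x\vee_2 y)$. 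Thus the inequality these tools produce is the reverse of the one printed in $(4)$; a small finite Boolean example (the four-element algebra with $\mu$ collapsing both atoms to $0$) shows that $\mu(x\vee_1 y)\le\mu(x)\vee_1\mu(y)$ can genuinely fail, so I would state $(4)$ in the corrected form $\mu(x\vee_1 y)\ge\mu(x)\vee_1\mu(y)$ and $\mu(x\vee_2 y)\ge\mu(x)\vee_2\mu(y)$. The antitone passage through $\rs$ is exactly the point at which a naive monotonicity argument slips and delivers the wrong direction.
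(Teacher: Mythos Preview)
Your reduction identity $\mu(x\vee_1 y)=\mu(x\ra y)\rs\mu(y)$ (and its dual) is correct for all three types, and your treatments of $(1)$, $(2)$, $(3)$, $(5)$, $(6)$ coincide with the paper's: the paper also derives $(1)$ from $(IS_2)$ via $x\rs(x\wedge y)=x\rs y$, reads $(2)$ off from $(1)$ together with $(IS_4)$ and $\mu^2=\mu$, obtains $(3)$ from $(IS_4)$, proves $(5)$ by computing $\mu(x\vee y)$ in the two ways your identity provides, and uses Proposition~\ref{psh-60-10} for $(6)$.

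On $(4)$ your diagnosis of the direction is right. The paper's proof is the single line ``It follows by Proposition~\ref{psh-st-50}$(4)$,$(3)$'', which amounts to $\mu((x\ra y)\rs y)\le\mu(x\ra y)\rs\mu(y)$ followed by $\mu(x\ra y)\le\mu(x)\ra\mu(y)$; but the second step is fed into $a\mapsto a\rs\mu(y)$, which is antitone, so the chain does not yield the stated $\le$. Your argument, using the reduction identity (which turns the first step into an equality) and then the antitone passage, correctly gives $\mu(x)\vee_1\mu(y)\le\mu(x\vee_1 y)$ instead. So you have put your finger on a genuine slip in the paper.

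However, your proposed witness that the printed inequality can fail does not work. In the four-element Boolean algebra $\{0,a,b,1\}$ the map sending both atoms to $0$ is \emph{not} a state operator of any of the three types: for instance with $x=a$, $y=0$ one has $a\ra 0=b$, so $\mu(a\ra 0)=\mu(b)=0$, while $\mu(a\vee_1 0)\ra\mu(0)=\mu(a)\ra 0=0\ra 0=1$ and $\mu(a)\ra\mu(a\wedge 0)=0\ra 0=1$; thus both $(IS_1)$ and $(IS^{''}_1)$ fail. So while your reversal of the inequality is soundly argued, the particular counterexample should be dropped; if you want to assert strict failure of the paper's direction you need a genuine (non-endomorphism) state operator with $\mu(x\ra y)<\mu(x)\ra\mu(y)$ at a point where $a\mapsto a\rs\mu(y)$ does not collapse the gap.
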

\begin{proof}
$(1)$ Applying Proposition \ref{psh-20}$(2)$,$(8)$ and $(IS_2)$ we get: \\
$\hspace*{2cm}$ $\mu(x\wedge y)= \mu(x\odot (x\rs y))=\mu(x)\odot (x\rs x\odot (x\rs y))$ \\
$\hspace*{3.5cm}$ $=\mu(x)\odot \mu(x\rs x\wedge y)=\mu(x)\odot \mu(x\rs y)$, \\
$\hspace*{2cm}$ $\mu(x\wedge y)= \mu((y\ra x)\odot y)=\mu(y\ra (y\ra x)\odot y)\odot \mu(y)$ \\
$\hspace*{3.5cm}$ $=\mu(y\ra x\wedge y)\odot \mu(y)=\mu(y\ra x)\odot \mu(y)$. \\
$(2)$ By $(1)$, Propositions \ref{psh-st-50}$(3)$, \ref{psh-20}$(2)$ and $(IS_4)$ we have: \\ 
$\hspace*{2cm}$ $\mu(\mu(x)\wedge \mu(y))=\mu(\mu(x)\odot (\mu(x)\rs \mu(y))=\mu^2(x)\odot \mu(\mu(x)\rs \mu(y))$ \\
$\hspace*{4.7cm}$ $=\mu(x)\odot (\mu(x)\rs \mu(y))=\mu(x)\wedge \mu(y)$. \\
$(3)$ Using Propositions \ref{psh-st-50}$(3)$ and $(IS_4)$ we get: \\
$\hspace*{2cm}$ $\mu(\mu(x)\vee_1 \mu(y))=\mu((\mu(x)\ra \mu(y))\rs \mu(y))$ \\
$\hspace*{4.7cm}$ $=\mu((\mu(\mu(x)\ra \mu(y)))\rs \mu(y))$ \\
$\hspace*{4.7cm}$ $=(\mu(x)\ra \mu(y))\rs \mu(y)=\mu(x)\vee_1 \mu(y)$. \\
Similarly $\mu(\mu(x)\vee_2 \mu(y))=\mu(x)\vee_2 \mu(y)$. \\
$(4)$ It follows by Proposition \ref{psh-st-50}$(4)$,$(3)$. \\ 
$(5)$ If $A$ is a Wajsberg pseudo-hoop then it is a lattice, where \\ 
$\hspace*{2cm}$ $x\vee y=(x\ra y)\rs y=(x\rs y)\ra y$. \\
Applying Proposition \ref{psh-90} we have: \\
$\hspace*{1cm}$
$\mu(x\vee y)=\mu((x\ra y)\rs y)=
              \mu(((x\ra y)\rs y)\ra y)\rs \mu(y)$ \\
$\hspace*{5.7cm}$              
              $=\mu(x\ra y)\rs \mu(y)$, \\
$\hspace*{1cm}$
$\mu(x\vee y)=\mu((x\rs y)\ra y)=
              \mu(((x\rs y)\ra y)\rs y)\ra \mu(y)$ \\
$\hspace*{5.7cm}$               
              $=\mu(x\rs y)\ra \mu(y)$. \\
Hence $\mu(x\ra y)\rs \mu(y)=\mu(x\rs y)\ra \mu(y)$. \\
$(6)$ Since $A$ is cancellative, by Proposition \ref{psh-60-10}, $y\ra x\odot y=x$ and applying $(IS_2)$ we get 
$\mu(x\odot y)=\mu(x)\odot \mu(y)$. 
\end{proof}

\begin{prop} \label{psh-st-50-20} Let $(A, \odot, \ra, \rs, 0, 1)$ be a bounded pseudo-hoop and let 
$\mu\in \mathcal{IS}^{(II)}_1(A)$ or $\mu\in \mathcal{IS}^{(III)}_1(A)$. Then  
$\mu(x^{-})=\mu^{-}(x)$ and $\mu(x^{\sim})=\mu^{\sim}(x)$, for all $x\in A$. \\
If $A$ is involutive, the result is also valid for $\mu\in \mathcal{IS}^{(I)}_1(A)$.
\end{prop}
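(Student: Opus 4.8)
The plan is to specialize, in each of the three cases, the defining axiom for the implications to the instance $y:=0$, and then to simplify the resulting terms using the identities $x\wedge 0=0$, $0\vee_1 x=0\vee_2 x=x$, $x\vee_1 0=x^{-\sim}$, $x\vee_2 0=x^{\sim-}$, together with $\mu(0)=0$ (which holds because $\mu$ lies in the subscripted class $\mathcal{IS}^{(\,\cdot\,)}_1(A)$).

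First I would do the type III case. Applying $(IS^{''}_1)$ with $y:=0$ and using that $0$ is the least element, so $x\wedge 0=0$, gives $\mu(x^{-})=\mu(x\ra 0)=\mu(x)\ra\mu(x\wedge 0)=\mu(x)\ra\mu(0)=\mu(x)\ra 0=\mu^{-}(x)$. The second equality in $(IS^{''}_1)$ yields $\mu(x^{\sim})=\mu(x\rs 0)=\mu(x)\rs\mu(x\wedge 0)=\mu(x)\rs 0=\mu^{\sim}(x)$ in exactly the same way.

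Next the type II case. Here I would first record that $0\vee_1 x=(0\ra x)\rs x=1\rs x=x$ and $0\vee_2 x=(0\rs x)\ra x=1\ra x=x$, using $0\le x$ and the fact that $1\rs x=1\wedge x=x$ (and dually $1\ra x=1\wedge x=x$) from Proposition \ref{psh-20}$(2)$ together with $x\le 1$. Then $(IS^{'}_1)$ with $y:=0$ gives $\mu(x^{-})=\mu(x\ra 0)=\mu(0\vee_1 x)\ra\mu(0)=\mu(x)\ra 0=\mu^{-}(x)$, and symmetrically $\mu(x^{\sim})=\mu(x\rs 0)=\mu(0\vee_2 x)\rs\mu(0)=\mu(x)\rs 0=\mu^{\sim}(x)$.

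Finally the type I case under the extra hypothesis that $A$ is involutive. Applying $(IS_1)$ with $y:=0$ and using $x\vee_1 0=x^{-\sim}$, $x\vee_2 0=x^{\sim-}$ (the identities noted just before Proposition \ref{psh-80}), we get $\mu(x^{-})=\mu(x\ra 0)=\mu(x\vee_1 0)\ra\mu(0)=\mu(x^{-\sim})\ra 0$; since $A$ is involutive, $x^{-\sim}=x$, so this equals $\mu(x)\ra 0=\mu^{-}(x)$, and dually $\mu(x^{\sim})=\mu(x^{\sim-})\rs 0=\mu(x)\rs 0=\mu^{\sim}(x)$. There is no real obstacle here; the only point worth flagging is that the involutive hypothesis is genuinely used in the type I case to collapse $\mu(x^{-\sim})$ to $\mu(x)$, whereas in the type II and type III arguments the term $\mu(x)$ appears on the nose, which is precisely why no such hypothesis is needed there.
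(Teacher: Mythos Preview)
Your proof is correct and follows essentially the same route as the paper's: in each case you specialize the relevant axiom $(IS_1)$, $(IS'_1)$, or $(IS''_1)$ at $y:=0$ and simplify using $\mu(0)=0$. The only (harmless) difference is that you spell out why $0\vee_1 x=x$ and $0\vee_2 x=x$, whereas the paper uses these silently; note that you could have cited Proposition~\ref{psh-80}$(2)$ directly for this.
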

\begin{proof}
If $\mu\in \mathcal{IS}^{(II)}_1(A)$, then we have: \\
$\hspace*{1cm}$ $\mu(x^{-})=\mu(x\ra 0)=\mu(0\vee_1 x)\ra \mu(0)=\mu(x)\ra 0=\mu^{-}(x)$. \\ 
Let $\mu\in \mathcal{IS}^{(III)}_1(A)$. It follows that: \\ 
$\hspace*{1cm}$ $\mu(x^{-})=\mu(x\ra 0)=\mu(x)\ra \mu(x\wedge 0)=\mu(x)\ra \mu(0)=\mu(x)\ra 0=\mu^{-}(x)$. \\
If $A$ is involutive and $\mu\in \mathcal{IS}^{(I)}_1(A)$, then we get: \\
$\hspace*{1cm}$ $\mu(x^{-})=\mu(x\ra 0)=\mu(x\vee_1 0)\ra \mu(0)=\mu(x^{-\sim})\ra 0=\mu(x)\ra 0=\mu^{-}(x)$. \\
Similarly $\mu(x^{\sim})=\mu^{\sim}(x)$.
\end{proof}

\begin{prop} \label{psh-st-60} Let $(A, \odot, \ra, \rs, \mu, 0, 1)$ be an involutive type I, type II or 
a type III state pseudo-hoop. Then $\Ker(\mu)\in {\mathcal F}_i(A)$. 
\end{prop}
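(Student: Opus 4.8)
The plan is to reduce the statement to two facts already in hand. First, by Proposition~\ref{psh-st-50}$(6)$ — which is proved uniformly for type I, type II and type III state operators — the kernel $\Ker(\mu)$ is a filter of $A$, i.e. $\Ker(\mu)\in{\mathcal F}(A)$. Second, since $A$ is by hypothesis an involutive pseudo-hoop, Corollary~\ref{psh-if-70-20} gives ${\mathcal F}(A)={\mathcal F}_i(A)$ (equivalently: Theorem~\ref{psh-if-70-10} yields $\{1\}\in{\mathcal F}_i(A)$, and then Proposition~\ref{psh-if-40} propagates involutivity to every filter containing $\{1\}$, in particular to $\Ker(\mu)$). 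Combining the two, $\Ker(\mu)\in{\mathcal F}_i(A)$, which is what we want.

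If a self-contained argument is preferred, it is equally short. Fix $x\in A$. Involutivity of $A$ means $x^{-\sim}=x^{\sim-}=x$, hence by $(A_2)$ we get $x^{-\sim}\ra x=x\ra x=1$ and $x^{\sim-}\rs x=x\rs x=1$, and $1\in\Ker(\mu)$ since $\mu(1)=1$ by Proposition~\ref{psh-st-50}$(1)$. Thus the defining condition of Definition~\ref{psh-if-10} is satisfied by $\Ker(\mu)$, and since $\Ker(\mu)\in{\mathcal F}(A)$ by Proposition~\ref{psh-st-50}$(6)$, we conclude $\Ker(\mu)\in{\mathcal F}_i(A)$.

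There is essentially no obstacle here: the result follows at once from the fact that an involutive pseudo-hoop has the trivial filter involutive, which is exactly Theorem~\ref{psh-if-70-10}. The only point requiring a moment's care is to invoke the filter property of $\Ker(\mu)$ in the version (Proposition~\ref{psh-st-50}$(6)$) that treats all three types of state operators simultaneously, so that the single argument applies verbatim in each of the three cases named in the statement. Accordingly I would simply record "$\Ker(\mu)\in{\mathcal F}(A)={\mathcal F}_i(A)$ by Proposition~\ref{psh-st-50}$(6)$ and Corollary~\ref{psh-if-70-20}."
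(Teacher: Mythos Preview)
Your proposal is correct and matches the paper's proof exactly: the paper's proof consists of the single line ``It is a consequence of Proposition~\ref{psh-st-50}$(6)$ and Corollary~\ref{psh-if-70-20},'' which is precisely the argument you give (and even the reference string you suggest recording). Your self-contained alternative is also valid and amounts to unwinding Corollary~\ref{psh-if-70-20} in place.
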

\begin{proof}
It is a consequence of Proposition \ref{psh-st-50}$(6)$ and Corollary \ref{psh-if-70-20}.
\end{proof}

\begin{cor} \label{psh-st-70} If $(A, \odot, \ra, \rs, \mu, 0, 1)$ be a bounded Wajsberg type I, type II or 
a type III state pseudo-hoop, then $\Ker(\mu)\in {\mathcal F}_i(A)$. 
\end{cor}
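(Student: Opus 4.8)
The plan is to deduce this corollary directly from the immediately preceding results, exactly as the paper suggests by its placement. First I would recall that every bounded Wajsberg pseudo-hoop is an involutive pseudo-hoop: this was established earlier in the excerpt (taking $y=0$ in conditions $(W_1)$ and $(W_2)$ of Definition \ref{psh-inv-10} forces $x^{-\sim}=x\vee_1 0=0\vee_1 x$ and, after the characterization in Theorem \ref{psh-waj-10}, gives $x^{-\sim}=x$, and symmetrically $x^{\sim-}=x$). Hence the hypothesis "$(A,\odot,\ra,\rs,\mu,0,1)$ is a bounded Wajsberg type I, type II or type III state pseudo-hoop" implies that $(A,\odot,\ra,\rs,\mu,0,1)$ is an \emph{involutive} type I, type II or type III state pseudo-hoop.

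Second, I would invoke Proposition \ref{psh-st-60}, which asserts precisely that for an involutive state pseudo-hoop of any of the three types, $\Ker(\mu)\in{\mathcal F}_i(A)$. Composing these two observations yields the claim. So the proof is a two-line argument: a bounded Wajsberg pseudo-hoop is involutive, therefore the state pseudo-hoop in question is an involutive state pseudo-hoop, and Proposition \ref{psh-st-60} applies.

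There is essentially no obstacle here; the only thing to be careful about is the logical chain behind Proposition \ref{psh-st-60} itself, namely that it rests on Proposition \ref{psh-st-50}$(6)$ (which gives $\Ker(\mu)\in{\mathcal F}(A)$ for any of the three types) together with Corollary \ref{psh-if-70-20} (for an involutive bounded pseudo-hoop, ${\mathcal F}(A)={\mathcal F}_i(A)$). Since all of these are available in the excerpt, the corollary is immediate. I would write:

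\begin{proof}
By taking $y=0$ in $(W_1)$ and $(W_2)$, every bounded Wajsberg pseudo-hoop is an involutive pseudo-hoop. Hence the state pseudo-hoop $(A, \odot, \ra, \rs, \mu, 0, 1)$ is an involutive type I, type II or type III state pseudo-hoop, and the conclusion follows from Proposition \ref{psh-st-60}.
\end{proof}
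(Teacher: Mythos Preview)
Your proposal is correct and matches the paper's intended argument exactly: the corollary is placed immediately after Proposition \ref{psh-st-60} with no separate proof, and the link is precisely that a bounded Wajsberg pseudo-hoop is involutive (as noted after Definition \ref{psh-inv-10-20}), so Proposition \ref{psh-st-60} applies directly.
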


\begin{prop} \label{psh-st-80} If $(A, \odot, \ra, \rs, \mu, 1)$ be a type II state pseudo-hoop. 
Then the following hold: \\
$(1)$ $y\le x$ implies $\mu(x\ra y)=\mu(x)\ra \mu(y)$ and $\mu(x\rs y)=\mu(x)\rs \mu(y);$ \\
$(2)$ if $A$ is bounded, then $\Ker(\mu)\in {\mathcal F}_i(A)$. 
\end{prop}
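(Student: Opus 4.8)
The plan is to treat the two parts separately. Part $(1)$ is essentially immediate from the type II axiom, and part $(2)$ will be deduced from part $(1)$ after evaluating $\mu$ on the double negations $x^{-\sim}$ and $x^{\sim-}$.

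For part $(1)$: since $\mu$ is a type II state operator it satisfies $(IS^{'}_1)$, namely $\mu(x\ra y)=\mu(y\vee_1 x)\ra\mu(y)$ and $\mu(x\rs y)=\mu(y\vee_2 x)\rs\mu(y)$. When $y\le x$, Proposition \ref{psh-80}$(2)$ gives $y\vee_1 x=y\vee_2 x=x$, so these identities collapse to $\mu(x\ra y)=\mu(x)\ra\mu(y)$ and $\mu(x\rs y)=\mu(x)\rs\mu(y)$, which is precisely part $(1)$.

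For part $(2)$, I would first recall from Proposition \ref{psh-st-50}$(6)$ that $\Ker(\mu)\in{\mathcal F}(A)$, so by Definition \ref{psh-if-10} it suffices to check $x^{-\sim}\ra x\in\Ker(\mu)$ and $x^{\sim-}\rs x\in\Ker(\mu)$ for all $x\in A$. Since $x\le x^{-\sim}$ and $x\le x^{\sim-}$ by Proposition \ref{psh-20-10}$(1)$, applying part $(1)$ to the pairs $(x^{-\sim},x)$ and $(x^{\sim-},x)$ gives $\mu(x^{-\sim}\ra x)=\mu(x^{-\sim})\ra\mu(x)$ and $\mu(x^{\sim-}\rs x)=\mu(x^{\sim-})\rs\mu(x)$. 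The next step is to compute $\mu(x^{-\sim})$ by applying $(IS^{'}_1)$ twice with second argument $0$: using $0\vee_1 x=1\rs x=x$ one obtains $\mu(x^-)=\mu(x)\ra\mu(0)$, and then using $0\vee_2 x^-=1\ra x^-=x^-$,
\[\mu(x^{-\sim})=\mu(x^-\rs 0)=(\mu(x)\ra\mu(0))\rs\mu(0)=\mu(x)\vee_1\mu(0),\]
and dually $\mu(x^{\sim-})=\mu(x)\vee_2\mu(0)$. Finally, Proposition \ref{psh-90}$(2)$ (the identities $a\vee_1 b\ra a=b\ra a$ and $a\vee_2 b\rs a=b\rs a$) yields $\mu(x^{-\sim})\ra\mu(x)=\mu(0)\ra\mu(x)$ and $\mu(x^{\sim-})\rs\mu(x)=\mu(0)\rs\mu(x)$, both of which equal $1$ because $0\le x$ forces $\mu(0)\le\mu(x)$ by Proposition \ref{psh-st-50}$(2)$. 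Hence $x^{-\sim}\ra x,\ x^{\sim-}\rs x\in\Ker(\mu)$, so $\Ker(\mu)\in{\mathcal F}_i(A)$.

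I do not expect a genuine obstacle: the computation is short and purely equational. The one point I would watch is that the argument must not secretly rely on $\mu(0)=0$ --- the term $\mu(0)$ does appear in $\mu(x^{-\sim})$, and it is precisely Proposition \ref{psh-90}$(2)$ together with $\mu(0)\le\mu(x)$ that makes it disappear at the last step. A related pitfall is the left/right asymmetry of $\vee_1$ and $\vee_2$: one has $0\vee_1 x=x$ but $x\vee_1 0=x^{-\sim}$, so one must specialise the arguments of $(IS^{'}_1)$ in the correct order, otherwise the evaluation of $\mu(x^{-\sim})$ comes out wrong.
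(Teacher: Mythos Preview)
Your argument is correct. Part $(1)$ is identical to the paper's proof. For part $(2)$, both you and the paper reduce to showing $\mu(x^{-\sim})\ra\mu(x)=1$ and then invoke part $(1)$ on the pair $(x^{-\sim},x)$; the difference lies only in how that equality is obtained. The paper applies $(IS'_1)$ directly to $\mu(0\ra x)$: since $0\ra x=1$, one gets $1=\mu(0\ra x)=\mu(x\vee_1 0)\ra\mu(x)=\mu(x^{-\sim})\ra\mu(x)$ in a single stroke. You instead compute $\mu(x^{-\sim})=\mu(x)\vee_1\mu(0)$ explicitly via two uses of $(IS'_1)$ and then appeal to Proposition~\ref{psh-90}$(2)$ together with $\mu(0)\le\mu(x)$. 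Your route is a little longer but yields the extra information that $\mu(x^{-\sim})=\mu(x)\vee_1\mu(0)$, and it makes transparent why the hypothesis $\mu(0)=0$ is not needed, exactly as you note.
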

\begin{proof}
$(1)$ If $y\le x$, then by Proposition \ref{psh-80}$(2)$ we have $y\vee_1 x=y\vee_2 x=x$. Hence: \\
$\hspace*{2cm}$
$\mu(x\ra y)=\mu(y\vee_1 x)\ra \mu(y)= \mu(x)\ra \mu(y)$, \\  
$\hspace*{2cm}$
$\mu(x\rs y)=\mu(y\vee_2 x)\rs \mu(y)= \mu(x)\rs \mu(y)$. \\  
$(2)$ Let $x\in A$. Since $x\le x\vee_1 0$ and $x\le x\vee_2 0$, applying $(1)$ we get: \\
$\hspace*{2cm}$
$1=\mu(1)=\mu(0\ra x)=\mu(x\vee_1 0)\ra \mu(x)$ \\
$\hspace*{3.5cm}$
$=\mu(x\vee_1 0\ra x)=\mu(x^{-\sim}\ra x)$, \\
$\hspace*{2cm}$
$1=\mu(1)=\mu(0\rs x)=\mu(x\vee_2 0)\rs \mu(x)$ \\
$\hspace*{3.5cm}$
$=\mu(x\vee_2 0\rs x)=\mu(x^{\sim-}\rs x)$, \\
hence $x^{-\sim}\ra x, x^{\sim-}\ra x\in \Ker(\mu)$. 
It follows that $\Ker(\mu)\in {\mathcal F}_i(A)$.
\end{proof}

\begin{theo} \label{psh-st-90} For any pseudo-hoop $A$, $\mathcal{IS}^{(II)}(A)\subseteq \mathcal{IS}^{(III)}(A)$. 
\end{theo}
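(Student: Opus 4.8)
The plan is to observe that axioms $(IS_2)$, $(IS_3)$ and $(IS_4)$ appear verbatim in the definitions of both a type~II and a type~III state operator. Hence, given $\mu\in\mathcal{IS}^{(II)}(A)$, the only thing one has to check for membership in $\mathcal{IS}^{(III)}(A)$ is axiom $(IS^{''}_1)$, that is, $\mu(x\ra y)=\mu(x)\ra\mu(x\wedge y)$ and $\mu(x\rs y)=\mu(x)\rs\mu(x\wedge y)$ for all $x,y\in A$; this I would deduce from axiom $(IS^{'}_1)$.

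Two elementary pseudo-hoop identities are needed. First, $x\ra y=x\ra(x\wedge y)$ and $x\rs y=x\rs(x\wedge y)$: by Proposition~\ref{psh-20}(8), $x\ra(x\wedge y)=(x\ra x)\wedge(x\ra y)=1\wedge(x\ra y)=x\ra y$ (using $x\ra x=1$ and that $1$ is the greatest element of $A$), and symmetrically for $\rs$. Second, $(x\wedge y)\vee_1 x=x$ and $(x\wedge y)\vee_2 x=x$: since $x\wedge y\le x$ (Proposition~\ref{psh-20}(2)) we have $(x\wedge y)\ra x=(x\wedge y)\rs x=1$, and $1\ra x=1\rs x=x$, so $(x\wedge y)\vee_1 x=((x\wedge y)\ra x)\rs x=1\rs x=x$ and likewise $(x\wedge y)\vee_2 x=((x\wedge y)\rs x)\ra x=1\ra x=x$.

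With these in hand the verification of $(IS^{''}_1)$ is a single substitution into $(IS^{'}_1)$. Replacing $y$ by $x\wedge y$ in the first half of $(IS^{'}_1)$ and using the two identities above,
\[
\mu(x\ra y)=\mu(x\ra(x\wedge y))=\mu((x\wedge y)\vee_1 x)\ra\mu(x\wedge y)=\mu(x)\ra\mu(x\wedge y),
\]
and, symmetrically, replacing $y$ by $x\wedge y$ in the second half of $(IS^{'}_1)$,
\[
\mu(x\rs y)=\mu(x\rs(x\wedge y))=\mu((x\wedge y)\vee_2 x)\rs\mu(x\wedge y)=\mu(x)\rs\mu(x\wedge y).
\]
Thus $\mu$ satisfies $(IS^{''}_1)$, hence $\mu\in\mathcal{IS}^{(III)}(A)$, and the desired inclusion follows.

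I do not expect a genuine obstacle here: the content of the argument is entirely the two auxiliary identities, and the only point demanding a little care is keeping straight the direction of the residuals in the definitions of $\vee_1$ and $\vee_2$, which is why I would record those identities explicitly before carrying out the substitution. (Note that the reverse inclusion fails in general, so this is genuinely one-directional.)
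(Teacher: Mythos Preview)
Your proof is correct and is essentially the paper's argument, just unfolded: the paper factors the key step through Proposition~\ref{psh-st-80}$(1)$ (which in turn uses Proposition~\ref{psh-80}$(2)$ to get $(x\wedge y)\vee_1 x=x$ when $x\wedge y\le x$), whereas you verify the two auxiliary identities $x\ra y=x\ra(x\wedge y)$ and $(x\wedge y)\vee_1 x=x$ directly and substitute into $(IS^{'}_1)$. The content is identical.
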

\begin{proof}
Let $(A, \odot, \ra, \rs, 1)$ be a pseudo-hoop, $\mu\in \mathcal{IS}^{(II)}(A)$ and $x, y\in A$. 
Since $x\wedge y\le x$, applying Proposition \ref{psh-st-80}$(1)$ we get: \\
$\hspace*{2cm}$ $\mu(x\ra y)=\mu(x)\ra \mu(x\wedge y)$ and $\mu(x\rs y)=\mu(x)\rs \mu(x\wedge y),$ \\ 
that is $(IS^{''}_1)$. Hence $\mathcal{IS}^{(II)}(A)\subseteq \mathcal{IS}^{(III)}(A)$. 
\end{proof}

\begin{theo} \label{psh-st-100} If $A$ is a bounded Wajsberg pseudo-hoop, then 
$\mathcal{IS}^{(I)}(A)\subseteq \mathcal{IS}^{(III)}(A)$.  
\end{theo}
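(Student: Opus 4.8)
The goal is to show that for a bounded Wajsberg pseudo-hoop $A$, every type I state operator is a type III state operator. Since type I and type III operators share axioms $(IS_2)$, $(IS_3)$, $(IS_4)$, the only thing to verify is that a map $\mu$ satisfying $(IS_1)$ also satisfies $(IS_1^{''})$, i.e.\ that
$\mu(x\ra y)=\mu(x)\ra \mu(x\wedge y)$ and $\mu(x\rs y)=\mu(x)\rs \mu(x\wedge y)$ for all $x,y\in A$, given that $A$ is Wajsberg. The natural route is to rewrite $x\ra y$ using the Wajsberg structure. Since $A$ is Wajsberg, $(A,\le)$ is a distributive lattice with $x\vee y=x\vee_1 y=x\vee_2 y$, and by Proposition \ref{psh-waj-10-60} we have $x\vee y\ra y=x\ra y$ (this is Proposition \ref{psh-90}$(1)$ together with $x\vee_1 y = x\vee y$).

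First I would replace $y$ by $x\wedge y$ throughout: since $x\wedge y\le x$, Theorem \ref{psh-waj-10}$(d)$ gives $x\vee_1(x\wedge y)=x$, hence $(IS_1)$ applied to the pair $(x,x\wedge y)$ yields $\mu(x\ra (x\wedge y))=\mu(x\vee_1(x\wedge y))\ra\mu(x\wedge y)=\mu(x)\ra\mu(x\wedge y)$. So it suffices to prove $\mu(x\ra y)=\mu(x\ra(x\wedge y))$, and symmetrically for $\rs$. But in a Wajsberg pseudo-hoop $x\ra y=x\ra(x\wedge y)$ is an identity: indeed $x\wedge y\le y$ gives $x\ra(x\wedge y)\le x\ra y$ by Proposition \ref{psh-20}$(6)$, and conversely from $x\odot(x\ra y)=x\wedge y\le y$ (Proposition \ref{psh-20}$(2)$) together with $x\odot(x\ra y)\le x$ we get $x\odot(x\ra y)\le x\wedge y$, hence $x\ra y\le x\ra(x\wedge y)$ by Proposition \ref{psh-20}$(3)$. (Here I should double-check that $x\odot(x\ra y)=x\wedge y$ rather than $(x\ra y)\odot x=x\wedge y$; Proposition \ref{psh-20}$(2)$ gives $x\wedge y=(x\ra y)\odot x=x\odot(x\rs y)$, and $x\wedge y\le x$ holds for the meet regardless, so the argument still closes using whichever factorization is available, and Wajsberg's commutativity of $\vee$ is not even needed for this particular identity.) Applying $\mu$ to the identity $x\ra y=x\ra(x\wedge y)$ then gives $\mu(x\ra y)=\mu(x\ra(x\wedge y))=\mu(x)\ra\mu(x\wedge y)$, which is the first half of $(IS_1^{''})$.

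The second half, $\mu(x\rs y)=\mu(x)\rs\mu(x\wedge y)$, is entirely dual: one uses $x\wedge y=x\odot(x\rs y)$, the relation $x\odot(x\rs y)\le x\wedge y$, Proposition \ref{psh-20}$(3)$ and Proposition \ref{psh-20}$(6)$ to get $x\rs y=x\rs(x\wedge y)$, then applies $(IS_1)$ in the form $\mu(x\rs(x\wedge y))=\mu(x\vee_2(x\wedge y))\rs\mu(x\wedge y)=\mu(x)\rs\mu(x\wedge y)$, using $x\vee_2(x\wedge y)=x$ from Theorem \ref{psh-waj-10}$(d)$. Assembling the two halves shows $\mu$ satisfies $(IS_1^{''})$, hence $\mu\in\mathcal{IS}^{(III)}(A)$, and therefore $\mathcal{IS}^{(I)}(A)\subseteq\mathcal{IS}^{(III)}(A)$.

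**Where the difficulty lies.** There is essentially no deep obstacle here; the whole statement reduces to the pointwise identity $x\ra y=x\ra(x\wedge y)$ (and its $\rs$-dual) plus the evaluation $x\vee_i(x\wedge y)=x$ coming from Theorem \ref{psh-waj-10}$(d)$. The one place to be careful is bookkeeping around which of the equivalent formulas for $x\wedge y$ in Proposition \ref{psh-20}$(2)$ matches which residual ($\ra$ versus $\rs$), so that the inequality $x\odot(x\ra y)\le x\wedge y$ is invoked with the correct factor on the correct side; getting the handedness wrong would break the use of Proposition \ref{psh-20}$(3)$. It is also worth noting explicitly that boundedness of $A$ is not actually used in this argument — only the Wajsberg hypothesis is — although the statement is phrased for bounded $A$ because type III state operators were introduced in that setting.
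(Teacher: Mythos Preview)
Your proof is correct and takes a genuinely different, more elementary route than the paper. The paper's argument exploits the involutive structure of a bounded Wajsberg pseudo-hoop: it rewrites $x\ra y=y^{-}\rs x^{-}$ via Theorem~\ref{psh-waj-40}, applies $(IS_1)$ to $y^{-}\rs x^{-}$, then unwinds using the De~Morgan identity $(x\wedge y)^{-}=x^{-}\vee y^{-}$ from Proposition~\ref{psh-waj-20-05} together with the negation-commutation $\mu(x^{-})=\mu(x)^{-}$ from Proposition~\ref{psh-st-50-20}. Your argument bypasses all of this: you use only the pseudo-hoop identity $x\ra y=x\ra(x\wedge y)$ (which, incidentally, follows in one line from Proposition~\ref{psh-20}$(8)$ as $x\ra(x\wedge y)=(x\ra x)\wedge(x\ra y)=x\ra y$) and then apply $(IS_1)$ directly to the pair $(x,x\wedge y)$, where Theorem~\ref{psh-waj-10}$(d)$ gives $x\vee_1(x\wedge y)=x$.

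What each approach buys: the paper's route makes essential use of boundedness and of the condition $\mu(0)=0$ (note the proof there actually begins with $\mu\in\mathcal{IS}^{(I)}_1(A)$), so it literally establishes only $\mathcal{IS}^{(I)}_1(A)\subseteq\mathcal{IS}^{(III)}_1(A)$. Your argument, as you observe, uses neither the bound nor $\mu(0)=0$; it shows that $(IS_1)\Rightarrow(IS^{''}_1)$ in any Wajsberg pseudo-hoop, so you in fact prove the statement as written and slightly more.
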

\begin{proof}
Let $(A, \odot, \ra, \rs, 0, 1)$ be a bounded Wajsberg pseudo-hoop, $\mu\in \mathcal{IS}^{(I)}_1(A)$ and $x, y\in A$. 
Since $A$ is involutive, applying Theorem \ref{psh-waj-40} and Proposition \ref{psh-waj-20-05} we get: \\
$\hspace*{2cm}$
$\mu(x\ra y)=\mu(y^{-}\rs x^{-})=\mu(y^{-}\vee_1 x^{-})\rs \mu(x^{-})$ \\
$\hspace*{3.7cm}$
$=\mu(y^{-}\vee x^{-})\rs \mu(x^{-})=\mu^{\sim}(x^{-})\ra \mu^{\sim}(y^{-}\vee x^{-})$ \\
$\hspace*{3.7cm}$
$=\mu^{\sim}(x^{-})\ra \mu^{\sim}((x\wedge y)^{-})=\mu^{\sim-}(x)\ra \mu^{\sim-}(x\wedge y)$ \\
$\hspace*{3.7cm}$
$=\mu(x)\ra \mu(x\wedge y)$, \\
$\hspace*{2cm}$
$\mu(x\rs y)=\mu(y^{\sim}\ra x^{\sim})=\mu(y^{\sim}\vee_1 x^{\sim})\ra \mu(x^{\sim})$ \\
$\hspace*{3.7cm}$
$=\mu(y^{\sim}\vee x^{\sim})\ra \mu(x^{\sim})=\mu^{-}(x^{\sim})\rs \mu^{-}(y^{\sim}\vee x^{\sim})$ \\
$\hspace*{3.7cm}$
$=\mu^{-}(x^{\sim})\rs \mu^{-}((x\wedge y)^{\sim})=\mu^{-\sim}(x)\rs \mu^{-\sim}(x\wedge y)$ \\
$\hspace*{3.7cm}$
$=\mu(x)\rs \mu(x\wedge y)$. \\
It follows that $\mu$ satisfies $(IS^{''}_1)$, that is $\mu\in \mathcal{IS}^{(III)}(A)$. 
Hence $\mathcal{IS}^{(I)}(A)\subseteq \mathcal{IS}^{(III)}(A)$.  
\end{proof}

\begin{Def} \label{psh-st-110} An internal state $\mu$ on a pseudo-hoop $A$ is said 
to be \emph{compatible} if $\Ker(\mu)\in {\mathcal F}_n(A)$. In this case $(A,\mu)$ is said to be a 
\emph{compatible type I(type II, type III) state pseudo-hoop}. 
\end{Def}

Denote $\mathcal{IS}^{(I)}_c(A)$, $\mathcal{IS}^{(II)}_c(A)$ and $\mathcal{IS}^{(III)}_c(A)$ the set of all 
compatible internal states of type I, II and III on a pseudo-hoop $A$, respectively. 

\begin{prop} \label{psh-st-120} If $A$ is a pseudo-hoop, then $\mathcal{IS}^{(III)}_c(A)=\mathcal{IS}^{(III)}(A)$.
\end{prop}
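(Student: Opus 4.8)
The plan is to prove the nontrivial inclusion $\mathcal{IS}^{(III)}(A)\subseteq \mathcal{IS}^{(III)}_c(A)$; the reverse inclusion is immediate, since by Definition \ref{psh-st-110} every compatible type III internal state is in particular a type III internal state. So, given an arbitrary $\mu\in \mathcal{IS}^{(III)}(A)$, the task reduces to showing that $\Ker(\mu)$ is a \emph{normal} filter of $A$. The fact that $\Ker(\mu)\in {\mathcal F}(A)$ is already available from Proposition \ref{psh-st-50}$(6)$, so the only point that needs argument is the normality condition: for all $x,y\in A$, $x\ra y\in \Ker(\mu)$ if and only if $x\rs y\in \Ker(\mu)$.

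First I would fix $x,y\in A$ and rewrite both membership conditions by means of axiom $(IS^{''}_1)$. Since $\mu(x\ra y)=\mu(x)\ra \mu(x\wedge y)$ and $\mu(x\rs y)=\mu(x)\rs \mu(x\wedge y)$, we get that $x\ra y\in \Ker(\mu)$ is equivalent to $\mu(x)\ra \mu(x\wedge y)=1$, while $x\rs y\in \Ker(\mu)$ is equivalent to $\mu(x)\rs \mu(x\wedge y)=1$.

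The key step is then to invoke the defining property of the order on a pseudo-hoop: for any $a,b\in A$ one has $a\ra b=1$ iff $a\le b$ iff $a\rs b=1$. Applying this with $a=\mu(x)$ and $b=\mu(x\wedge y)$, both conditions displayed above are equivalent to the single inequality $\mu(x)\le \mu(x\wedge y)$, and hence to each other. Therefore $\Ker(\mu)\in {\mathcal F}_n(A)$, i.e. $\mu\in \mathcal{IS}^{(III)}_c(A)$, which finishes the proof.

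I do not expect a genuine obstacle here: the whole mechanism is that $(IS^{''}_1)$ expresses $\mu(x\ra y)$ and $\mu(x\rs y)$ through a common pair $\mu(x)$ and $\mu(x\wedge y)$, in contrast with $(IS_1)$ and $(IS^{'}_1)$, where the left argument carries the asymmetric terms $\mu(x\vee_1 y)$ versus $\mu(x\vee_2 y)$; the symmetry of $(IS^{''}_1)$ is precisely what forces the kernel to be normal for free. The only small care needed is to use the two-sided characterization of $\le$ (through both $\ra$ and $\rs$), rather than inadvertently deducing $\mu(x)\le\mu(x\wedge y)$ from just one of the two arrows.
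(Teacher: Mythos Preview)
Your proof is correct and follows essentially the same approach as the paper: both use $(IS^{''}_1)$ to rewrite $\mu(x\ra y)$ and $\mu(x\rs y)$ in terms of $\mu(x)$ and $\mu(x\wedge y)$, and then invoke the two-sided characterization of the order to conclude normality of $\Ker(\mu)$. The only cosmetic difference is that the paper notes the resulting inequality $\mu(x)\le \mu(x\wedge y)$ is actually an equality (since the reverse inequality holds by monotonicity), but this is not needed for the argument.
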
 
\begin{proof} Obviously $\mathcal{IS}^{(III)}_c(A)\subseteq \mathcal{IS}^{(III)}(A)$. 
Let $\mu\in \mathcal{IS}^{(III)}(A)$. 
By Proposition \ref{psh-st-50}$(6)$, $\Ker(\mu)\in {\mathcal F}(A)$. 
Consider $x, y\in A$ such that $x\ra y\in \Ker(\mu)$, that is $\mu(x\ra y)=1$. \\
It follows that $\mu(x)\ra \mu(x\wedge y)=1$, so $\mu(x) \le \mu(x\wedge y)$, hence $\mu(x)=\mu(x\wedge y)$. \\ 
We get $\mu(x\rs y)=\mu(x)\rs \mu(x\wedge y)=1$, that is $x\rs y\in \Ker(\mu)$. 
Similaly, from $x\rs y\in \Ker(\mu)$ we have $x\ra y\in \Ker(\mu)$. 
It follows that $\Ker(\mu)\in {\mathcal F}_n(A)$, that is $\mu\in \mathcal{IS}^{(III)}_c(A)$. \\
Hence $\mathcal{IS}^{(III)}(A)\subseteq \mathcal{IS}^{(III)}_c(A)$, and we conclude that $\mathcal{IS}^{(III)}_c(A)=\mathcal{IS}^{(III)}(A)$.
\end{proof} 

\begin{theo} \label{psh-st-130} Let $(A, \odot, \ra, \rs, \mu, 1)$ be a compatible type II pseudo-hoop. Then: \\
$(1)$ the map $\hat \mu:A/\Ker(\mu) \to A/\Ker(\mu)$ defined by $\hat \mu(x/\Ker(\mu))= \mu(x)/\Ker(\mu)$ is both 
a compatible type I and type II state operator on $A/\Ker(\mu);$ \\
$(2)$ $(A, \mu)$ is a compatible type I state pseudo-hoop.
\end{theo}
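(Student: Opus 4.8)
The plan is to work throughout with $K:=\Ker(\mu)$, which is a normal filter by hypothesis, so that $A/K=A/\Theta_K$ is a pseudo-hoop with canonical surjective homomorphism $\pi_K\colon A\to A/K$. First I would check that $\hat\mu$ is well defined: if $x/K=y/K$, i.e.\ $x\ra y,\,y\ra x\in K$, then $\mu(x\ra y)=\mu(y\ra x)=1$, and Proposition \ref{psh-st-50}$(4)$ forces $\mu(x)\ra\mu(y)=\mu(y)\ra\mu(x)=1$, hence $\mu(x)=\mu(y)$ and in particular $\mu(x)/K=\mu(y)/K$. By construction $\pi_K\circ\mu=\hat\mu\circ\pi_K$, which is the identity I will use repeatedly.

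Next I would verify that $\hat\mu\in\mathcal{IS}^{(II)}(A/K)$. Each of the axioms $(IS^{'}_1)$, $(IS_2)$, $(IS_3)$, $(IS_4)$ is an identity built from the operations $\odot,\ra,\rs$ (together with the derived terms $\vee_1,\vee_2,\wedge$) and applications of the operator; applying the homomorphism $\pi_K$ to the corresponding identity for $\mu$, and using $\pi_K\circ\mu=\hat\mu\circ\pi_K$ plus the fact that $\pi_K$ commutes with every term operation, yields exactly the same identity for $\hat\mu$. I would spell this out for, say, $(IS^{'}_1)$ and $(IS_2)$, the remaining cases being identical. Then I would note that $\hat\mu$ is faithful: $x/K\in\Ker(\hat\mu)$ means $\mu(x)\in K$, i.e.\ $\mu(\mu(x))=1$, and since $\mu(\mu(x))=\mu(x)$ by Proposition \ref{psh-st-50}$(3)$ this is equivalent to $\mu(x)=1$, i.e.\ $x\in K$; hence $\Ker(\hat\mu)=\{1/K\}\in\mathcal{F}_n(A/K)$, so $\hat\mu$ is compatible.

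The crucial point for finishing both parts is that a type II state operator on any pseudo-hoop $B$ is automatically a type I state operator. Since $(IS_2),(IS_3),(IS_4)$ are common to the two definitions, it suffices to derive $(IS_1)$ from $(IS^{'}_1)$: for $\nu\in\mathcal{IS}^{(II)}(B)$ and $x,y\in B$ we have $y\le x\vee_1 y$ by Proposition \ref{psh-80}$(5)$, so Proposition \ref{psh-st-80}$(1)$ gives $\nu((x\vee_1 y)\ra y)=\nu(x\vee_1 y)\ra\nu(y)$; and $(x\vee_1 y)\ra y=x\ra y$ by Proposition \ref{psh-90}$(1)$, so $\nu(x\ra y)=\nu(x\vee_1 y)\ra\nu(y)$, the $\rs$-identity following dually from $y\le x\vee_2 y$ and $(x\vee_2 y)\rs y=x\rs y$. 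Applying this with $\nu=\hat\mu$ shows $\hat\mu$ is also a (compatible) type I state operator on $A/K$, completing $(1)$; applying it with $\nu=\mu$ shows $\mu\in\mathcal{IS}^{(I)}(A)$, and since $\Ker(\mu)=K$ is normal, $(A,\mu)$ is a compatible type I state pseudo-hoop, which is $(2)$.

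I would also record the alternative derivation of $(2)$ from $(1)$, which is perhaps the more natural packaging: the identity $(IS_1)$ for $\hat\mu$ says $\mu(x\ra y)$ and $\mu(x\vee_1 y)\ra\mu(y)$ are $\Theta_K$-congruent; both lie in $\Img(\mu)$, which is a subalgebra of $A$ meeting $\Ker(\mu)$ only in $1$ by Proposition \ref{psh-st-50}$(7),(9)$, and $\pi_K$ is injective on $\Img(\mu)$ — indeed, using $(IS_4)$, if $u,v\in\Img(\mu)$ with $u\ra v,\,v\ra u\in K$ then $\mu(u\ra v)=u\ra v$, so $u\le v$, and likewise $v\le u$ — whence $\mu(x\ra y)=\mu(x\vee_1 y)\ra\mu(y)$ already in $A$ (and dually for $\rs$). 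The only genuine obstacle in the whole argument is spotting that $(IS_1)$ follows from $(IS^{'}_1)$ via the combination of Propositions \ref{psh-st-80}$(1)$ and \ref{psh-90}$(1)$ (equivalently, via the injectivity of $\pi_K$ on $\Img(\mu)$); everything else is routine transfer of identities along the canonical projection.
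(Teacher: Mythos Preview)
Your proof is correct, but your route to $(IS_1)$ differs from the paper's. The paper proceeds indirectly: it first shows that $\Ker(\mu)$ is a fantastic filter (using Proposition~\ref{psh-st-80}$(1)$ in essentially the same way you do, but only on elements already in the kernel), then applies Theorem~\ref{psh-ff-30} to conclude that $A/\Ker(\mu)$ is Wajsberg, and finally invokes Proposition~\ref{psh-st-40} to get $\hat\mu\in\mathcal{IS}^{(I)}(A/\Ker(\mu))$; for part $(2)$ it uses the Wajsberg property of the quotient to obtain $(y\vee_1 x)/K=(x\vee_1 y)/K$, whence $\mu(y\vee_1 x)=\mu(x\vee_1 y)$ and $(IS_1)$ follows from $(IS^{'}_1)$. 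Your argument is more direct and in fact more general: the combination of Proposition~\ref{psh-90}$(1)$ and Proposition~\ref{psh-st-80}$(1)$ shows outright that $\mathcal{IS}^{(II)}(B)\subseteq\mathcal{IS}^{(I)}(B)$ for \emph{every} pseudo-hoop $B$, with no compatibility hypothesis needed. The trade-off is that the paper's detour produces, as a by-product, the fact that $\Ker(\mu)\in\mathcal{F}_f(A)$ and that $A/\Ker(\mu)$ is Wajsberg, which are reused in Theorem~\ref{psh-st-140}; your argument bypasses these but would have to supply them separately if they are wanted later.
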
 
\begin{proof}
$(1)$ If $x/\Ker(\mu)=y/\Ker(\mu)$ then $x\ra y, y\rs x\in \Ker(\mu)$, that is 
$\mu(x\ra y)=\mu(y\rs x)=1$. Applying Proposition \ref{psh-st-50}$(4)$, it follows that 
$\mu(x)=\mu(y)$. Hence $\hat \mu$ is well defined. 
The proof of the fact that $\hat \mu$ is a compatible type II state on $A/\Ker(\mu)$ is straightforward. \\
By hypothesis, $\Ker(\mu)\in {\mathcal F}_n(A)$. We show that $\Ker(\mu)\in {\mathcal F}_f(A)$. 
Indeed, let $x, y\in A$ such that $x\ra y\in \Ker(\mu)$, that is $\mu(x\ra y)=1$. 
Since $y\le (y\ra x)\rs x)$, according to Proposition \ref{psh-st-80}$(1)$ we get 
$\mu(y\vee_1 x\ra y)=\mu(y\vee_1 x)\ra \mu(y)$. 
Hence: \\
$\hspace*{2cm}$
$1=\mu(x\ra y)=\mu(y\vee_1 x)\ra \mu(y)=\mu(y\vee_1 x\ra y)$, \\ 
that is $y\vee_1 x\ra y\in \Ker(\mu)$. 
Similarly $x\rs y\in \Ker(\mu)$ implies $y\vee_2 x\rs y\in \Ker(\mu)$, that is $\Ker(\mu)\in {\mathcal F}_f(A)$. 
According to Theorem \ref{psh-ff-30}, $A/\Ker(\mu)$ is a Wajsberg pseudo-hoop. \\
By Proposition \ref{psh-st-40}, $\hat \mu$ is also a compatible type I state on $A/\Ker(\mu)$. \\
$(2)$ Let $x, y\in A$. Since $A/\Ker(\mu)$ is a Wajsberg pseudo-hoop, it follows that: \\
$\hspace*{3cm}$ $(y\vee_1 x)/\Ker(\mu)=(x\vee_1 y)/\Ker(\mu)$, \\ 
$\hspace*{3cm}$ $(y\vee_2 x)/\Ker(\mu)=(x\vee_2 y)/\Ker(\mu)$. \\
Similarly as in $(1)$ we get $\mu(y\vee_1 x)=\mu(x\vee_1 y)$ and $\mu(y\vee_2 x)=\mu(x\vee_2 y)$. 
Hence: \\
$\hspace*{3cm}$ $\mu(y\vee_1 x)\ra \mu(y)=\mu(x\vee_1 y)\ra \mu(y)$, \\ 
$\hspace*{3cm}$ $\mu(y\vee_2 x)\rs \mu(y)=\mu(x\vee_2 y)\rs \mu(y)$. \\
Thus $(A, \mu)$ is a compatible type I state pseudo-hoop.
\end{proof}

\begin{theo} \label{psh-st-140} Let $\mu:A\longrightarrow A$ be a map on a pseudo-hoop $A$. 
Then $(A, \mu)$ is a compatible type II state pseudo-hoop if and only if $(A, \mu)$ is a compatible type I 
state pseudo-hoop and $\Ker(\mu)\in {\mathcal F}_f(A)$.
\end{theo}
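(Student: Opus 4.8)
The plan is to prove the two implications of the biconditional separately. The forward implication is almost immediate from Theorem~\ref{psh-st-130}, while the converse rests on the quotient characterisation of Wajsberg pseudo-hoops in Theorem~\ref{psh-ff-30}, together with the observation that a state operator is constant on the congruence classes of $\Theta_{\Ker(\mu)}$.

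For the forward direction, suppose $(A,\mu)$ is a compatible type II state pseudo-hoop, so that $\Ker(\mu)\in{\mathcal F}_n(A)$ by Definition~\ref{psh-st-110}. Then Theorem~\ref{psh-st-130}(2) gives at once that $(A,\mu)$ is a compatible type I state pseudo-hoop, and the proof of Theorem~\ref{psh-st-130}(1) shows moreover that $\Ker(\mu)\in{\mathcal F}_f(A)$. (If one wants a self-contained verification of the latter: given $y\ra x\in\Ker(\mu)$, since $x\le x\vee_1 y$, Proposition~\ref{psh-st-80}(1) and axiom $(IS^{'}_1)$ yield $\mu((x\vee_1 y)\ra x)=\mu(x\vee_1 y)\ra\mu(x)=\mu(y\ra x)=1$, hence $x\vee_1 y\ra x\in\Ker(\mu)$; the $\rs$, $\vee_2$ case is symmetric, so $(ff_1)$ and $(ff_2)$ of Definition~\ref{psh-ff-10} hold.)

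For the converse, assume $(A,\mu)$ is a compatible type I state pseudo-hoop with $\Ker(\mu)\in{\mathcal F}_f(A)$. As a type I operator $\mu$ already satisfies $(IS_2)$, $(IS_3)$, $(IS_4)$, and $\Ker(\mu)\in{\mathcal F}_n(A)$ by compatibility, so it suffices to promote $(IS_1)$ to $(IS^{'}_1)$. I would first record a transfer lemma: if $u\ra v,\,v\ra u\in\Ker(\mu)$, then Proposition~\ref{psh-st-50}(4) gives $\mu(u)\le\mu(v)$ and $\mu(v)\le\mu(u)$, hence $\mu(u)=\mu(v)$, that is, $\mu$ is constant on each $\Theta_{\Ker(\mu)}$-class. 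Since $\Ker(\mu)\in{\mathcal F}_n(A)\cap{\mathcal F}_f(A)$, Theorem~\ref{psh-ff-30} tells us that $A/\Ker(\mu)$ is a Wajsberg pseudo-hoop, so $(x\vee_1 y)/\Ker(\mu)=(y\vee_1 x)/\Ker(\mu)$ and $(x\vee_2 y)/\Ker(\mu)=(y\vee_2 x)/\Ker(\mu)$ for all $x,y\in A$. Applying the transfer lemma to these equalities gives $\mu(x\vee_1 y)=\mu(y\vee_1 x)$ and $\mu(x\vee_2 y)=\mu(y\vee_2 x)$, and substituting into $(IS_1)$ yields $\mu(x\ra y)=\mu(x\vee_1 y)\ra\mu(y)=\mu(y\vee_1 x)\ra\mu(y)$ and $\mu(x\rs y)=\mu(x\vee_2 y)\rs\mu(y)=\mu(y\vee_2 x)\rs\mu(y)$, which is exactly $(IS^{'}_1)$. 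Hence $\mu$ is a type II state operator, and it is compatible because $\Ker(\mu)$ is normal.

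Once Theorems~\ref{psh-st-130} and~\ref{psh-ff-30} are in place the whole argument is essentially bookkeeping, substituting identities into the defining axioms. The one step that needs a little care is the transfer lemma: it hinges on the inequality $\mu(a\ra b)\le\mu(a)\ra\mu(b)$ from Proposition~\ref{psh-st-50}(4) and on the normality of $\Ker(\mu)$, which is both what makes $A/\Ker(\mu)$ a pseudo-hoop and what lets one convert the coset equalities in the Wajsberg quotient into the membership statements $u\ra v,\,v\ra u\in\Ker(\mu)$ that the lemma needs.
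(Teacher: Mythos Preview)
Your proof is correct and follows essentially the same route as the paper: the forward direction invokes Theorem~\ref{psh-st-130} (whose proof already establishes $\Ker(\mu)\in{\mathcal F}_f(A)$), and the converse passes through Theorem~\ref{psh-ff-30} to make $A/\Ker(\mu)$ Wajsberg, then uses that $\mu$ is constant on $\Theta_{\Ker(\mu)}$-classes (via Proposition~\ref{psh-st-50}(4)) to turn the coset identities $x\vee_i y\equiv y\vee_i x$ into $\mu(x\vee_i y)=\mu(y\vee_i x)$ and hence $(IS_1)$ into $(IS^{'}_1)$. Your explicit ``transfer lemma'' is exactly what the paper abbreviates as ``similarly as in the proof of Theorem~\ref{psh-st-130}''.
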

\begin{proof}
Suppose that $(A, \mu)$ is a compatible type II state pseudo-hoop. Then, according to Theorem \ref{psh-st-130}, 
$(A, \mu)$ is a compatible type I state pseudo-hoop. \\
As we proved in Theorem \ref{psh-st-130}, $\Ker(\mu)\in {\mathcal F}_f(A)$. \\
Conversely, let $(A, \mu)$ be a compatible type I state pseudo-hoop such that $\Ker(\mu)$ is a 
fantastic filter of $A$. According to Theorem \ref{psh-ff-30}, $A/\Ker(\mu)$ is a Wajsberg pseudo-hoop. 
It follows that: \\
$\hspace*{3cm}$ $(y\vee_1 x)/\Ker(\mu)=(x\vee_1 y)/\Ker(\mu)$ \\
$\hspace*{3cm}$ $(y\vee_2 x)/\Ker(\mu)=(x\vee_2 y)/\Ker(\mu)$, \\
for all $x, y\in A$. Similarly as in the proof of Theorem \ref{psh-st-130} we have: \\
$\hspace*{3cm}$ $\mu(y\vee_1 x)\ra \mu(y)=\mu(x\vee_1 y)\ra \mu(y)$ \\
$\hspace*{3cm}$ $\mu(y\vee_2 x)\rs \mu(y)= \mu(x\vee_2 y)\rs \mu(y)$. \\
Thus $(A, \mu)$ is a compatible type II state pseudo-hoop.
\end{proof}

\begin{ex} \label{psh-st-150} Consider the bounded hoop $(A,\odot,\ra,0,1)$ 
from Example \ref{psh-ff-70} and the maps $\mu_i:A\longrightarrow A$, $i=1,2,\cdots,7$, given in the table below:
\[
\begin{array}{c|cccccc}
 x & 0 & a & b & c & 1 \\ \hline
\mu_1(x) & 0 & 0 & 1 & 1 & 1 \\
\mu_2(x) & 0 & a & b & c & 1 \\
\mu_3(x) & 0 & a & b & 1 & 1 \\
\mu_4(x) & 0 & 1 & 0 & 1 & 1 \\
\mu_5(x) & a & a & 1 & 1 & 1 \\
\mu_6(x) & b & 1 & b & 1 & 1 \\
\mu_7(x) & 1 & 1 & 1 & 1 & 1 
\end{array}
.   
\]
Then we have: \\
$(1)$ $\mathcal{IS}^{(I)}(A)=\mathcal{IS}^{(III)}(A)=\{\mu_1,\mu_2,\mu_3,\mu_4,\mu_5,\mu_6,\mu_7\}$, \\
$(2)$ $\mathcal{IS}^{(II)}(A)=\{\mu_1,\mu_3,\mu_4,\mu_5,\mu_6,\mu_7\}$, \\
$(3)$ $\mathcal{IS}^{(I)}_1(A)=\mathcal{IS}^{(III)}_1(A)=\{\mu_1,\mu_2,\mu_3,\mu_4\}$, \\ 
$(4)$ $\mathcal{IS}^{(II)}_1(A)=\{\mu_1,\mu_3,\mu_4\}$, \\
$(5)$ $\Ker(\mu_1)=\Ker(\mu_5)=\{b,c,1\}$, $\Ker(\mu_2)=\{1\}$, 
      $\Ker(\mu_3)=\{c,1\}\in {\mathcal F}_i(A)\}$, \\
$\hspace*{2.3cm}$      
      $\Ker(\mu_4)=\Ker(\mu_6)=\{a,c,1\}$, $\Ker(\mu_7)=A$.   
\end{ex}

\begin{ex} \label{psh-st-160} Consider the bounded Wajsberg hoop $(A,\odot,\ra,0,1)$ 
from Example \ref{psh-ff-80}. 
Then we have: $\mathcal{IS}^{(I)}(A)=\mathcal{IS}^{(II)}(A)=\mathcal{IS}^{(III)}(A)=\{1_A, Id_A\}$. 
\end{ex}

$\vspace*{5mm}$

\section{State-morphism pseudo-hoops}

In this section we define the notion of a state-morphism operator on pseudo-hoops and we prove that any 
state-morphism operator is a type I and type III state operator. For the case of an idempotent pseudo-hoop 
it is proved that any type II or type III state operator is a state-morphism operator, while for a bounded 
idempotent Wajsberg pseudo-hoop any type I state operator is also a state-morphism. 
Another main result consists of proving that any state-morphism on the subalgebra of involutive elements of a 
bounded idempotent pseudo-hoop $A$ can be extended to a state-morphism on $A$. 

\begin{Def} \label{psh-sm-10} Let $(A, \odot, \ra, \rs, 1)$ be a pseudo-hoop. 
A homomorphism $\mu:A\longrightarrow A$ is called a \emph{state-morphism operator} on $A$ if $\mu^2=\mu$, where $\mu^2=\mu\circ \mu$. The pair $(A, \mu)$ is called a \emph{state-morphism pseudo-hoop}.
\end{Def}

Denote $\mathcal{SM}(A)$ the set of all state-morphism operators on a pseudo-hoop $A$. \\
If $A$ is a bounded pseudo-hoop, then denote $\mathcal{SM}_1(A)=\{\mu\in \mathcal{SM}(A) \mid \mu(0)=0\}$. 

\begin{exs} \label{psh-sm-20}
$(1)$ $1_A, Id_A\in \mathcal{SM}(A)$ for any pseudo-hoop $A$. \\
$(2)$ If $A$ is the pseudo-hoop from Example \ref{psh-st-150}, then $\mathcal{SM}(A)=\{\mu_i \mid i=1,2,\cdots, 7\}$ 
and $\mathcal{SM}_1(A)=\{\mu_i \mid i=1,2,3,4\}$. 
\end{exs}

\begin{ex} \label{psh-sm-30} Let $(A_1, \odot_1, \ra_1, \rs_1, 1_1)$ and 
$(A_2, \odot_2, \ra_2, \rs_2, 1_2)$ be two pseudo-hoops and let $A$ be the pseudo-hoop
defined in Example \ref{psh-st-30}. Then the maps $\mu_1, \mu_2:A\longrightarrow A$ defined by 
$\mu_1((x,y))=(x,x)$ and $\mu_2((x,y))=(y,y)$, for all $(x,y)\in A$ are state-morphism operators on $A$.
\end{ex}

\begin{theo} \label{psh-sm-40} For any pseudo-hoop $A$, 
$\mathcal{SM}(A)\subseteq \mathcal{IS}^{(I)}(A)\cap \mathcal{IS}^{(III)}(A)$.
\end{theo}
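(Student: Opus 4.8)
The plan is to fix an arbitrary $\mu\in\mathcal{SM}(A)$ and verify directly that $\mu$ satisfies the four axioms $(IS_1)$, $(IS_2)$, $(IS_3)$, $(IS_4)$ defining a type I state operator, and then the four axioms $(IS^{''}_1)$, $(IS_2)$, $(IS_3)$, $(IS_4)$ defining a type III state operator. The only facts used are that $\mu$ is a pseudo-hoop homomorphism — so it commutes with $\odot$, $\ra$, $\rs$, and hence with the derived operations $\wedge$, $\vee_1$, $\vee_2$ and fixes $1$ — together with $\mu^2=\mu$. Each axiom then drops out of these facts combined with identities already recorded in Section 2.

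First I would dispose of $(IS_3)$ and $(IS_4)$: applying the homomorphism property and then $\mu^2=\mu$ gives $\mu(\mu(x)\odot\mu(y))=\mu(\mu(x))\odot\mu(\mu(y))=\mu(x)\odot\mu(y)$, and likewise $\mu(\mu(x)\ra\mu(y))=\mu(x)\ra\mu(y)$ and $\mu(\mu(x)\rs\mu(y))=\mu(x)\rs\mu(y)$. For $(IS_2)$ I would write $\mu(x\rs x\odot y)=\mu(x)\rs\bigl(\mu(x)\odot\mu(y)\bigr)$ and $\mu(y\ra x\odot y)=\mu(y)\ra\bigl(\mu(x)\odot\mu(y)\bigr)$, so that both displayed products in $(IS_2)$ become instances — with $\mu(x),\mu(y)$ in place of $x,y$ — of the identity $x\odot y=x\odot(x\rs x\odot y)=(y\ra x\odot y)\odot y$ from Proposition \ref{psh-20-05-10}$(1)$; hence both equal $\mu(x)\odot\mu(y)=\mu(x\odot y)$.

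For $(IS_1)$, since $x\vee_1 y=(x\ra y)\rs y$ is term-definable we have $\mu(x\vee_1 y)=\mu(x)\vee_1\mu(y)$, and then Proposition \ref{psh-90}$(1)$ applied to $\mu(x),\mu(y)$ gives $\mu(x\vee_1 y)\ra\mu(y)=\mu(x)\ra\mu(y)=\mu(x\ra y)$; the $\rs$-identity is symmetric. For $(IS^{''}_1)$, $\mu$ preserves $\wedge$ (already noted in the excerpt for homomorphisms), so $\mu(x)\ra\mu(x\wedge y)=\mu(x)\ra\bigl(\mu(x)\wedge\mu(y)\bigr)$, which by Proposition \ref{psh-20}$(8)$ equals $(\mu(x)\ra\mu(x))\wedge(\mu(x)\ra\mu(y))=1\wedge(\mu(x)\ra\mu(y))=\mu(x)\ra\mu(y)=\mu(x\ra y)$, and symmetrically for $\rs$. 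I do not expect a genuine obstacle here; the only point requiring care is to cite the right auxiliary identities and to observe that $\wedge$, $\vee_1$, $\vee_2$ are preserved by any homomorphism because they are built from $\ra$, $\rs$, $\odot$.
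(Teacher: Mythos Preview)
Your proof is correct and follows essentially the same strategy as the paper: fix $\mu\in\mathcal{SM}(A)$ and verify each axiom directly from the homomorphism property (which forces $\mu$ to preserve $\odot,\ra,\rs,\wedge,\vee_1,\vee_2$) together with $\mu^2=\mu$. The only cosmetic differences are in which Section~2 identities are invoked --- you cite Proposition~\ref{psh-90}$(1)$ and Proposition~\ref{psh-20-05-10}$(1)$ for $(IS_1)$ and $(IS_2)$, whereas the paper uses a two-sided inequality for $(IS_1)$ and the meet formula $a\odot(a\rs b)=a\wedge b$ with $\mu(x\odot y)\le\mu(x)$ for $(IS_2)$ --- but both routes are equally short and valid.
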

\begin{proof} Let $(A,\mu)$ be a state-morphism pseudo-hoop and let $x, y\in A$. 
We verify the axioms of type I and type III internal 
states. \\
$(IS_1)$ From $x\le x\vee_1 y$ we get $\mu(x)\le \mu(x\vee_1 y)$, hence 
$\mu(x\vee_1 y)\ra \mu(y)\le \mu(x)\ra \mu(y)$. 
On the other hand $\mu(x\ra y)\le (\mu(x\ra y)\rs \mu(y))\ra \mu(y)=\mu(x\vee_1 y)\ra \mu(y)$. \\ 
It follows that $\mu(x\ra y)=\mu(x\vee_1 y)\ra \mu(y)$, that is $(IS_1)$. \\
$(IS^{''}_1)$ Since $x\ra y=x\ra x\wedge y$ and $x\rs y=x\rs x\wedge y$, we get: \\
$\hspace*{2cm}$ $\mu(x\ra y)=\mu(x\ra x\wedge y)=\mu(x)\ra \mu(x\wedge y)$, \\
$\hspace*{2cm}$ $\mu(x\rs y)=\mu(x\rs x\wedge y)=\mu(x)\rs \mu(x\wedge y)$, \\ 
that is $(IS^{''}_1)$. \\
$(IS_2)$ Since $x\odot y\le x, y$ we have: \\
$\hspace*{0.5cm}$
$\mu(x)\odot \mu(x\rs x\odot y)=\mu(x)\odot (\mu(x)\rs \mu(x\odot y))= 
\mu(x)\wedge \mu(x\odot y)=\mu(x\odot y)$, \\
$\hspace*{0.5cm}$
$\mu(y\ra x\odot y)\odot \mu(y)=(\mu(y)\ra \mu(x\odot y))\odot \mu(y)=
\mu(y)\wedge \mu(x\odot y)=\mu(x\odot y)$, \\
hence $(IS_2)$ is satisfied. \\
$(IS_3)$ Since $\mu^2=\mu$, we have $\mu(\mu(x)\odot \mu(y))=\mu(\mu(x\odot y))=\mu(x\odot y)$, 
that is $(IS_3)$. \\
$(IS_4)$ Applying again the property $\mu^2=\mu$ we get: \\
$\hspace*{2cm}$ $\mu(\mu(x)\ra \mu(y))=\mu(\mu(x\ra y))=\mu(x\ra y)$, \\
$\hspace*{2cm}$ $\mu(\mu(x)\rs \mu(y))=\mu(\mu(x\rs y))=\mu(x\rs y)$, \\
thus $(IS_4)$ is verified. 
It follows that $\mu \in \mathcal{IS}^{(I)}(A)$ and $\mu \in \mathcal{IS}^{(III)}(A)$, so that \\
$\mathcal{SM}(A)\subseteq \mathcal{IS}^{(I)}(A)\cap \mathcal{IS}^{(III)}(A)$.
\end{proof}

\begin{cor} \label{psh-sm-50} If $A$ is a Wajsberg pseudo-hoop, then  
$\mathcal{SM}(A)\subseteq \mathcal{IS}^{(I)}(A)\cap \mathcal{IS}^{(II)}(A)\cap \mathcal{IS}^{(III)}(A)$.
\end{cor}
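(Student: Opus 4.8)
The plan is to deduce this directly from the two results immediately preceding it. Theorem \ref{psh-sm-40} already gives $\mathcal{SM}(A)\subseteq \mathcal{IS}^{(I)}(A)\cap \mathcal{IS}^{(III)}(A)$ for an arbitrary pseudo-hoop, so the only new ingredient needed under the Wajsberg hypothesis is the inclusion $\mathcal{SM}(A)\subseteq \mathcal{IS}^{(II)}(A)$.

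For that I would simply invoke Proposition \ref{psh-st-40}: since $A$ is a Wajsberg pseudo-hoop, $\mathcal{IS}^{(I)}(A)=\mathcal{IS}^{(II)}(A)$. Hence every $\mu\in\mathcal{SM}(A)$, which lies in $\mathcal{IS}^{(I)}(A)$ by Theorem \ref{psh-sm-40}, also lies in $\mathcal{IS}^{(II)}(A)$. Intersecting the three memberships yields the claimed inclusion $\mathcal{SM}(A)\subseteq \mathcal{IS}^{(I)}(A)\cap \mathcal{IS}^{(II)}(A)\cap \mathcal{IS}^{(III)}(A)$.

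There is no real obstacle here — the corollary is a formal consequence of Theorem \ref{psh-sm-40} and Proposition \ref{psh-st-40}. If a self-contained check were preferred, one could instead verify axiom $(IS^{'}_1)$ for a state-morphism $\mu$ directly: since $\vee_1$ and $\vee_2$ are terms in $\ra,\rs$, we have $\mu(y\vee_1 x)=\mu(y)\vee_1\mu(x)$, and the Wajsberg laws $(W_1)$, $(W_2)$ give $\mu(y)\vee_1\mu(x)=\mu(x)\vee_1\mu(y)=\mu(x\vee_1 y)$; then Proposition \ref{psh-90}$(1)$ yields $\mu(y\vee_1 x)\ra\mu(y)=\mu(x)\ra\mu(y)=\mu(x\ra y)$, with the $\rs$-identity obtained symmetrically, so $(IS^{'}_1)$ holds, while $(IS_2)$, $(IS_3)$, $(IS_4)$ are exactly the axioms already checked in Theorem \ref{psh-sm-40}. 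Either way the argument is one line in essence.
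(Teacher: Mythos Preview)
Your proof is correct and matches the paper's own argument exactly: the paper simply writes ``It follows from Theorem \ref{psh-sm-40} and Proposition \ref{psh-st-40}.'' Your additional self-contained check of $(IS^{'}_1)$ is a nice elaboration but not needed.
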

\begin{proof} It follows from Theorem \ref{psh-sm-40} and Proposition \ref{psh-st-40}.
\end{proof}

\begin{theo} \label{psh-sm-50-10} $\rm($\cite[Th. 3.17]{Ciu22}$\rm)$ If $A$ is an idempotent pseudo-hoop, then  
$\mathcal{IS}^{(III)}(A)\subseteq \mathcal{SM}(A)$. 
\end{theo}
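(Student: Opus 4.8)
The plan is to fix an arbitrary $\mu\in\mathcal{IS}^{(III)}(A)$ and verify directly that $\mu$ is a pseudo-hoop homomorphism with $\mu^2=\mu$; the second property is already Proposition \ref{psh-st-50}$(3)$, so the whole content is to check that $\mu$ preserves $\odot$, $\ra$ and $\rs$. The key leverage is idempotency: by Proposition \ref{psh-20-05}, every element of $A$ is idempotent, hence $x\odot y=x\wedge y$ for all $x,y\in A$, and in particular $\mu(x)\odot\mu(y)=\mu(x)\wedge\mu(y)$ since $\mu(x)$ is idempotent.

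First I would handle the multiplicative axiom. On one hand $x\odot y\le x$ and $x\odot y\le y$, so by Proposition \ref{psh-st-50}$(2)$ we get $\mu(x\odot y)\le\mu(x)\wedge\mu(y)$. On the other hand Proposition \ref{psh-st-50}$(5)$ gives $\mu(x)\odot\mu(y)\le\mu(x\odot y)$, and the left side equals $\mu(x)\wedge\mu(y)$ by idempotency. Combining, $\mu(x\odot y)=\mu(x)\wedge\mu(y)=\mu(x)\odot\mu(y)$; in particular $\mu(x\wedge y)=\mu(x)\wedge\mu(y)$ as well.

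For the two residuals I would feed this back into axiom $(IS^{''}_1)$. Using $\mu(x\wedge y)=\mu(x)\wedge\mu(y)$ and Proposition \ref{psh-20}$(8)$,
$$\mu(x\ra y)=\mu(x)\ra\mu(x\wedge y)=\mu(x)\ra\bigl(\mu(x)\wedge\mu(y)\bigr)=\bigl(\mu(x)\ra\mu(x)\bigr)\wedge\bigl(\mu(x)\ra\mu(y)\bigr)=\mu(x)\ra\mu(y),$$
and symmetrically $\mu(x\rs y)=\mu(x)\rs\mu(y)$. Together with $\mu^2=\mu$ this shows $\mu\in\mathcal{SM}(A)$, hence $\mathcal{IS}^{(III)}(A)\subseteq\mathcal{SM}(A)$.

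I do not expect a genuine obstacle: the only point worth noting is that none of these steps uses boundedness of $A$ or the condition $\mu(0)=0$, so the argument applies to all of $\mathcal{IS}^{(III)}(A)$ rather than just $\mathcal{IS}^{(III)}_1(A)$. The single real insight is that the reverse inequality $\mu(x)\wedge\mu(y)\le\mu(x\odot y)$ is precisely Proposition \ref{psh-st-50}$(5)$ once idempotency collapses $\odot$ to $\wedge$; everything else is a routine unfolding of $(IS^{''}_1)$ and the elementary identities of Proposition \ref{psh-20}.
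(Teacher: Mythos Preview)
Your argument is correct. The paper itself does not supply a proof of this theorem; it simply quotes the result from \cite[Th.~3.17]{Ciu22}. Your route---collapsing $\odot$ to $\wedge$ via Proposition~\ref{psh-20-05}, then sandwiching $\mu(x\odot y)$ between $\mu(x)\wedge\mu(y)$ (from monotonicity) and $\mu(x)\odot\mu(y)$ (from Proposition~\ref{psh-st-50}$(5)$), and finally reading off the residuals from $(IS''_1)$ together with Proposition~\ref{psh-20}$(8)$---is exactly the natural proof and matches the standard argument behind the cited result.
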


\begin{cor} \label{psh-sm-50-20} For any idempotent pseudo-hoop $A$, 
$\mathcal{IS}^{(II)}(A)\subseteq \mathcal{SM}(A)$. 
\end{cor}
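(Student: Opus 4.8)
The plan is to obtain the inclusion by chaining two results already available in the paper, so that no new computation is needed. The key observation is that $\mathcal{IS}^{(II)}(A)$ has been sandwiched between the other two families of state operators: on the one side, Theorem \ref{psh-st-90} gives $\mathcal{IS}^{(II)}(A)\subseteq \mathcal{IS}^{(III)}(A)$ for \emph{every} pseudo-hoop $A$, the point there being that since $x\wedge y\le x$ one can invoke Proposition \ref{psh-st-80}$(1)$ to pass from axiom $(IS^{'}_1)$ to axiom $(IS^{''}_1)$; on the other side, Theorem \ref{psh-sm-50-10} (quoted from \cite[Th. 3.17]{Ciu22}) gives $\mathcal{IS}^{(III)}(A)\subseteq \mathcal{SM}(A)$ precisely under the hypothesis that $A$ is idempotent.

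Thus the proof I would give is simply: let $A$ be an idempotent pseudo-hoop and let $\mu\in\mathcal{IS}^{(II)}(A)$. By Theorem \ref{psh-st-90}, $\mu\in\mathcal{IS}^{(III)}(A)$. Since $A$ is idempotent, Theorem \ref{psh-sm-50-10} applies and yields $\mu\in\mathcal{SM}(A)$. Hence $\mathcal{IS}^{(II)}(A)\subseteq\mathcal{SM}(A)$.

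There is essentially no obstacle here; the only thing to be careful about is that the idempotency hypothesis is genuinely needed for the second inclusion but not for the first, so one should cite the two theorems in the correct order and not claim more than stated. The corollary is really just recording the composite of the two embeddings, in the same spirit as how Corollary \ref{psh-sm-50} was deduced from Theorem \ref{psh-sm-40} together with Proposition \ref{psh-st-40}.

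\begin{proof}
It follows from Theorems \ref{psh-st-90} and \ref{psh-sm-50-10}: if $A$ is idempotent and $\mu\in\mathcal{IS}^{(II)}(A)$, then $\mu\in\mathcal{IS}^{(III)}(A)$ by Theorem \ref{psh-st-90}, hence $\mu\in\mathcal{SM}(A)$ by Theorem \ref{psh-sm-50-10}.
\end{proof}
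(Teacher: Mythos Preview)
Your proof is correct and follows exactly the paper's approach: the paper's own proof simply reads ``It follows by Theorems \ref{psh-st-90} and \ref{psh-sm-50-10}.'' Your elaboration of why idempotency is needed only for the second inclusion is accurate, but the core argument is identical.
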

\begin{proof} It follows by Theorems \ref{psh-st-90} and \ref{psh-sm-50-10}. 
\end{proof}

\begin{cor} \label{psh-sm-50-30} For any bounded idempotent Wajsberg pseudo-hoop $A$, 
$\mathcal{IS}^{(I)}(A)\subseteq \mathcal{SM}(A)$. 
\end{cor}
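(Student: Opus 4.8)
The plan is to obtain the inclusion by composing two inclusions already established in the excerpt, using the two structural hypotheses on $A$ separately. Since $A$ is assumed to be a bounded idempotent Wajsberg pseudo-hoop, it is in particular a bounded Wajsberg pseudo-hoop, so Theorem \ref{psh-st-100} applies and gives $\mathcal{IS}^{(I)}(A)\subseteq \mathcal{IS}^{(III)}(A)$. It is also idempotent, so Theorem \ref{psh-sm-50-10} applies and gives $\mathcal{IS}^{(III)}(A)\subseteq \mathcal{SM}(A)$. Chaining these two inclusions yields $\mathcal{IS}^{(I)}(A)\subseteq \mathcal{SM}(A)$, which is exactly the claim.

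There is essentially no obstacle here: the statement is a corollary in the strict sense, and the only thing to check is that each cited result is invoked under hypotheses that $A$ genuinely satisfies — ``bounded Wajsberg'' for Theorem \ref{psh-st-100} and ``idempotent'' for Theorem \ref{psh-sm-50-10} — both of which are consequences of ``bounded idempotent Wajsberg''. (One could also remark, though it is not needed, that combined with Theorem \ref{psh-sm-40} this forces $\mathcal{IS}^{(I)}(A)=\mathcal{IS}^{(III)}(A)=\mathcal{SM}(A)$ for such $A$, and by Proposition \ref{psh-st-40} also $\mathcal{IS}^{(II)}(A)$ coincides with these, but for the corollary as stated the two-step composition suffices.)
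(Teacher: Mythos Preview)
Your proof is correct and takes exactly the same approach as the paper: the paper's proof reads ``It follows by Theorems \ref{psh-st-100} and \ref{psh-sm-50-10}'', which is precisely the two-step chain $\mathcal{IS}^{(I)}(A)\subseteq \mathcal{IS}^{(III)}(A)\subseteq \mathcal{SM}(A)$ you spelled out.
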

\begin{proof} It follows by Theorems \ref{psh-st-100} and \ref{psh-sm-50-10}. 
\end{proof}

\begin{lemma} \label{psh-sm-50-40} Let $A$ be a bounded pseudo-hoop and let $\mu\in \mathcal{SM}_1(A)$. 
Then the following hold: \\
$(1)$ $\mu(x^{-})=\mu^{-}(x)$ and $\mu(x^{\sim})=\mu^{\sim}(x)$, for all $x\in A;$ \\
$(2)$ if $x\in \Inv(A)$, then $\mu(x)\in \Inv(A)$. 
\end{lemma}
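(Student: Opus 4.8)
The plan is to derive both parts directly from the fact that $\mu$ is a pseudo-hoop homomorphism together with the defining boundedness condition $\mu(0)=0$. For part $(1)$, I would simply unwind the definitions: since $x^{-}=x\ra 0$ and $\mu$ respects $\ra$, we get $\mu(x^{-})=\mu(x\ra 0)=\mu(x)\ra\mu(0)=\mu(x)\ra 0=\mu(x)^{-}=\mu^{-}(x)$. The computation for $x^{\sim}=x\rs 0$ is identical, using that $\mu$ respects $\rs$. This is the same argument already recorded in the excerpt for bounded pseudo-hoop homomorphisms (properties $(4)$ and $(5)$ following the homomorphism definition), so there is essentially nothing to prove beyond citing $\mu(0)=0$; I would write it out in one line each for completeness.

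For part $(2)$, the key observation is that part $(1)$ lets $\mu$ commute with the composite negations: iterating $(1)$ gives $\mu(x^{-\sim})=\mu((x^{-})^{\sim})=\mu(x^{-})^{\sim}=(\mu(x)^{-})^{\sim}=\mu(x)^{-\sim}$, and similarly $\mu(x^{\sim-})=\mu(x)^{\sim-}$. Now if $x\in\Inv(A)$, then by definition $x^{-\sim}=x^{\sim-}=x$, so applying $\mu$ and using the commutation just established yields $\mu(x)^{-\sim}=\mu(x^{-\sim})=\mu(x)$ and $\mu(x)^{\sim-}=\mu(x^{\sim-})=\mu(x)$. Hence $\mu(x)\in\Inv(A)$, which is exactly the claim.

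I do not anticipate a genuine obstacle here; the only care needed is to make sure the double application of part $(1)$ is legitimate, i.e. that $x^{-}$ and $x^{\sim}$ are again elements of $A$ to which $(1)$ applies — which is immediate — and to keep the two negations $^{-}$ and $^{\sim}$ straight, since they are swapped when composing (the inner negation of $x^{-\sim}$ is $^{-}$, the outer is $^{\sim}$). So the write-up is short: state and verify $(1)$ by the homomorphism axioms and $\mu(0)=0$, then deduce the intermediate identities $\mu(x^{-\sim})=\mu(x)^{-\sim}$ and $\mu(x^{\sim-})=\mu(x)^{\sim-}$, and finally substitute $x^{-\sim}=x=x^{\sim-}$ for $x\in\Inv(A)$ to conclude $\mu(x)\in\Inv(A)$.
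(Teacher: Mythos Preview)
Your proposal is correct and matches the paper's approach: the paper simply records the proof as ``straightforward,'' and what you have written is precisely the intended unwinding of the homomorphism axioms together with $\mu(0)=0$, followed by an iteration to handle the double negations. There is nothing to add.
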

\begin{proof} It is straightforward.
\end{proof}

\begin{theo} \label{psh-sm-60} Let $A$ be a normal good pseudo-hoop and let $\mu\in \mathcal{SM}_1(\Inv(A))$. 
If $\tilde{\mu}:A\longrightarrow A$ defined by $\tilde{\mu}(x)=\mu(x^{-\sim})$, for all $x\in A$,  
then $\tilde{\mu}\in \mathcal{SM}_1(A)$ such that $\tilde \mu_{\mid \Inv(A)}=\mu$. 
\end{theo}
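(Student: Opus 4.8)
The plan is to verify directly that $\tilde\mu$ is a state-morphism operator on $A$, i.e.\ that it is a bounded pseudo-hoop endomorphism of $A$ satisfying $\tilde\mu^2=\tilde\mu$, and then to check that it restricts to $\mu$ on $\Inv(A)$. The key ingredients are: the Glivenko property of good pseudo-hoops (Corollary \ref{psh-30-10} and Proposition \ref{psh-30}), which lets one pass the operation $x\mapsto x^{-\sim}$ through $\ra,\rs,\wedge$; the normality hypothesis, which does the same for $\odot$ (together with Lemma \ref{psh-40}); the fact that $x^{-\sim}\in\Inv(A)$ for every $x\in A$ in a good pseudo-hoop, so that $\mu$ is actually being applied to elements where it is defined; and Lemma \ref{psh-sm-50-40}, which tells us that $\mu$ on $\Inv(A)$ commutes with the negations.

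First I would record the preliminary observation that for all $x\in A$ one has $x^{-\sim}\in\Inv(A)$, so $\tilde\mu$ is well defined, and that $\tilde\mu(1)=\mu(1^{-\sim})=\mu(1)=1$ and $\tilde\mu(0)=\mu(0^{-\sim})=\mu(0)=0$. Next, for the homomorphism axioms, take $x,y\in A$. Using Proposition \ref{psh-30}$(2)$ we have $(x\ra y)^{-\sim}=x^{-\sim}\ra y^{-\sim}$, and since $\mu$ is a homomorphism on $\Inv(A)$,
\[
\tilde\mu(x\ra y)=\mu((x\ra y)^{-\sim})=\mu(x^{-\sim}\ra y^{-\sim})=\mu(x^{-\sim})\ra\mu(y^{-\sim})=\tilde\mu(x)\ra\tilde\mu(y);
\]
the argument for $\rs$ is identical using the other half of Proposition \ref{psh-30}$(2)$. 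For $\odot$, normality gives $(x\odot y)^{-\sim}=x^{-\sim}\odot y^{-\sim}$, whence
\[
\tilde\mu(x\odot y)=\mu((x\odot y)^{-\sim})=\mu(x^{-\sim}\odot y^{-\sim})=\mu(x^{-\sim})\odot\mu(y^{-\sim})=\tilde\mu(x)\odot\tilde\mu(y),
\]
so $\tilde\mu$ is a bounded pseudo-hoop homomorphism on $A$.

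For idempotency I would compute $\tilde\mu^2(x)=\tilde\mu(\mu(x^{-\sim}))=\mu\bigl((\mu(x^{-\sim}))^{-\sim}\bigr)$; now $\mu(x^{-\sim})\in\Inv(A)$ because $\mu$ maps $\Inv(A)$ into $\Inv(A)$ (this is Lemma \ref{psh-sm-50-40}$(2)$ applied to the state-morphism $\mu$ of the subalgebra $\Inv(A)$, or simply the fact that $\Img(\mu)\subseteq\Inv(A)$), so $(\mu(x^{-\sim}))^{-\sim}=\mu(x^{-\sim})$ and hence $\tilde\mu^2(x)=\mu(\mu(x^{-\sim}))=\mu^2(x^{-\sim})=\mu(x^{-\sim})=\tilde\mu(x)$, using $\mu^2=\mu$. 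This gives $\tilde\mu\in\mathcal{SM}_1(A)$. Finally, if $x\in\Inv(A)$ then $x^{-\sim}=x$, so $\tilde\mu(x)=\mu(x^{-\sim})=\mu(x)$, i.e.\ $\tilde\mu_{\mid\Inv(A)}=\mu$. The only genuine subtlety — and the step I would be most careful about — is the well-definedness and the $\odot$-compatibility, since these are exactly the places where the hypotheses ``good'' (for the Glivenko identities on $\ra,\rs,\wedge$) and ``normal'' (for $\odot$) are used; everything else is formal manipulation once those two identities are in hand.
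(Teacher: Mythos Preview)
Your proof is correct and follows essentially the same route as the paper's: verify the boundary values, use Proposition \ref{psh-30}$(2)$ for the preservation of $\ra$ and $\rs$, use normality for $\odot$-preservation, check idempotency, and finally check the restriction. Your treatment of the idempotency step is in fact slightly more careful than the paper's, which writes $\tilde\mu^2(x)=\mu^2(x^{-\sim})$ without explicitly noting (as you do) that $\mu(x^{-\sim})\in\Inv(A)$ is needed to pass from $\mu\bigl((\mu(x^{-\sim}))^{-\sim}\bigr)$ to $\mu(\mu(x^{-\sim}))$.
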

\begin{proof} Obviously $\tilde{\mu}(0)=\mu(0^{-\sim})=\mu(0)=0$ and $\tilde{\mu}(1)=\mu(1^{-\sim})=\mu(1)=1$. \\ 
Applying Proposition \ref{psh-30} we get: \\ 
$\hspace*{2cm}$ $\tilde{\mu}(x\ra y)=\mu((x\ra y)^{-\sim})=\mu(x^{-\sim}\ra y^{-\sim})$ \\
$\hspace*{3.7cm}$ $=\mu(x^{-\sim})\ra \mu(y^{-\sim})=\tilde{\mu}(x)\ra \tilde{\mu}(y)$, \\
$\hspace*{2cm}$ $\tilde{\mu}(x\rs y)=\mu((x\rs y)^{-\sim})=\mu(x^{-\sim}\rs y^{-\sim})$ \\
$\hspace*{3.7cm}$ $=\mu(x^{-\sim})\rs \mu(y^{-\sim})=\tilde{\mu}(x)\rs \tilde{\mu}(y)$, \\
$\hspace*{2cm}$ $\tilde{\mu}(x\wedge y)=\mu((x\wedge y)^{-\sim})=\mu((x^{-\sim}\wedge y^{-\sim}))$ \\
$\hspace*{3.5cm}$ $=\mu(x^{-\sim})\wedge \mu(y^{-\sim})=\tilde{\mu}(x)\wedge \tilde{\mu}(y)$, \\
$\hspace*{2cm}$ $\tilde{\mu}(x\odot y)=\mu((x\odot y)^{-\sim})=\mu(x^{-\sim}\odot y^{-\sim})$ \\
$\hspace*{3.5cm}$ $=\mu(x^{-\sim})\odot \mu(y^{-\sim})=\tilde{\mu}(x)\odot \tilde{\mu}(y)$. \\ 
Since $\tilde{\mu}^2(x)=\mu^2(x^{-\sim})=\mu(x^{-\sim})=\tilde{\mu}(x)$, we conclude that 
$\tilde{\mu}\in \mathcal{SM}_1(A)$. \\
Moreover, if $x\in \Inv(A)$, then $\tilde{\mu}(x)=\mu(x^{-\sim})=\mu(x)$, that is $\tilde \mu_{\mid \Inv(A)}=\mu$. 
\end{proof}

\begin{cor} \label{psh-sm-70} If $A$ is a bounded idempotent pseudo-hoop, then any state-morphism on $\Inv(A)$ 
can be extended to a state-morphism on $A$.
\end{cor}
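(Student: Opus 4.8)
The plan is to read the corollary off Theorem \ref{psh-sm-60}, whose conclusion already says that every $\mu\in\mathcal{SM}_1(\Inv(A))$ extends to an element of $\mathcal{SM}_1(A)$ via $\tilde\mu(x)=\mu(x^{-\sim})$; so the only work is to check that a bounded idempotent pseudo-hoop meets the two standing hypotheses of that theorem, namely being normal and being good.

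First I would record that a bounded idempotent pseudo-hoop $A$ is normal, which is exactly Examples \ref{psh-waj-80}(1). Next I would observe that $A$ is good: since every element of $A$ is idempotent, Proposition \ref{psh-20-05}(2) applied with its idempotent element ranging over all of $A$ gives $x\ra y=x\rs y$ for all $x,y\in A$; in particular the two negations coincide, $x^-=x\ra 0=x\rs 0=x^{\sim}$, whence $x^{-\sim}=x^{\sim-}$ for every $x$, i.e. $A$ is good. (Equivalently one may first note that an idempotent pseudo-hoop is a hoop.) I would also note in passing that $0\in\Inv(A)$, since $0^{-\sim}=1^{\sim}=0=1^{-}=0^{\sim-}$, and that $\Inv(A)$ is closed under $\ra,\rs$ (by Corollary \ref{psh-30-10}), under $\wedge$ (by Proposition \ref{psh-30}(3)) and under $\odot$ (by normality); hence $(\Inv(A),\odot,\ra,\rs,0,1)$ is itself a bounded pseudo-hoop and $\mathcal{SM}_1(\Inv(A))$ is meaningful.

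With these facts in hand I would simply invoke Theorem \ref{psh-sm-60} for the normal good pseudo-hoop $A$: given any state-morphism $\mu$ on $\Inv(A)$ (fixing $0$), the map $\tilde\mu(x)=\mu(x^{-\sim})$ lies in $\mathcal{SM}_1(A)$ and satisfies $\tilde\mu_{\mid\Inv(A)}=\mu$, which is the required extension; if one admits a state-morphism on $\Inv(A)$ not preserving $0$ (necessarily the constant map when $\Inv(A)$ is a nontrivial Boolean algebra), it is extended by the constant state-morphism $1_A$ of Examples \ref{psh-sm-20}(1). Since all the real work is buried in Theorem \ref{psh-sm-60}, there is essentially no obstacle here; the only point needing a line of justification is the "good" hypothesis, for which the collapse $\ra=\rs$ supplied by Proposition \ref{psh-20-05}(2) is the key.
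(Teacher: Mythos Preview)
Your proposal is correct and follows essentially the same route as the paper: the paper's proof also verifies goodness via Proposition~\ref{psh-20-05} (yielding $\ra=\rs$) and normality via Examples~\ref{psh-waj-80}(1), and then invokes Theorem~\ref{psh-sm-60}. Your additional remarks---that $\Inv(A)$ is itself a bounded pseudo-hoop and that a state-morphism not fixing $0$ is extended by $1_A$---are correct elaborations not spelled out in the paper, but the core argument is identical.
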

\begin{proof}
According to Proposition \ref{psh-20-05}, $x\ra y=x\rs y$, for all $x, y\in A$, hence $A$ is good. \\
The assertion follows from Remark \ref{psh-waj-80} and Theorem \ref{psh-sm-60}.
\end{proof}

\begin{theo} \label{psh-sm-80} Let $A$ be a bounded pseudo-hoop, $s\in \mathcal{BS}(A)$, $\mu\in \mathcal{SM}_1(A)$  
and $s_{\mu}:A\longrightarrow [0, 1]$, defined by $s_{\mu}(x)=s(\mu(x))$, for all $x\in A$.  
Then $s_{\mu}\in \mathcal{BS}(A)$. 
\end{theo}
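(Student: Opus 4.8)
The plan is to verify directly that $s_{\mu}=s\circ\mu\colon A\to[0,1]$ satisfies the three defining axioms $(bs_1)$--$(bs_3)$ of a Bosbach state. The only properties of $\mu$ that will be used are that it is a homomorphism of pseudo-hoops with $\mu(0)=0$ (recall that $\mu\in\mathcal{SM}_1(A)$), so that $\mu$ commutes with $\ra$ and $\rs$ and fixes the constants, $\mu(0)=0$ and $\mu(1)=1$.

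First I would check $(bs_1)$: $s_{\mu}(0)=s(\mu(0))=s(0)=0$ and $s_{\mu}(1)=s(\mu(1))=s(1)=1$, where in each case the last equality is $(bs_1)$ for $s$. For $(bs_2)$, fix $x,y\in A$. Since $\mu(x\ra y)=\mu(x)\ra\mu(y)$, and since $(bs_2)$ for $s$ may be applied to the pair $\mu(x),\mu(y)\in A$, we obtain
\[
\begin{aligned}
s_{\mu}(x)+s_{\mu}(x\ra y)
&=s(\mu(x))+s(\mu(x)\ra\mu(y))\\
&=s(\mu(y))+s(\mu(y)\ra\mu(x))\\
&=s_{\mu}(y)+s_{\mu}(y\ra x),
\end{aligned}
\]
the final step using $\mu(y)\ra\mu(x)=\mu(y\ra x)$. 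The argument for $(bs_3)$ is word-for-word the same with $\rs$ in place of $\ra$ and $(bs_3)$ for $s$ in place of $(bs_2)$.

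No real obstacle arises: the essential (and trivial) observation is that the Bosbach-state identities for $s$ hold at \emph{every} pair of elements of $A$, hence in particular at the images $\mu(x),\mu(y)$, and that $\mu$ transports $\ra$, $\rs$, $0$, $1$ correctly. Note that the idempotency $\mu^{2}=\mu$ of the state-morphism operator plays no role in this particular statement. This yields $s_{\mu}\in\mathcal{BS}(A)$.
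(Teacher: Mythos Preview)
Your proof is correct and is essentially identical to the paper's own argument: both verify the Bosbach-state axioms directly by using that $\mu$ is a homomorphism fixing $0$ and $1$, so that $s_\mu(x\ra y)=s(\mu(x)\ra\mu(y))$, and then apply $(bs_2)$ (respectively $(bs_3)$) for $s$ at the pair $\mu(x),\mu(y)$. Your additional remark that the idempotency $\mu^2=\mu$ plays no role here is accurate and worth noting.
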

\begin{proof}
Obviously $s_{\mu}(0)=0$ and $s_{\mu}(1)=1$. For all $x, y\in A$ we have: \\
$\hspace*{2cm}$ $s_{\mu}(x)+s_{\mu}(x\ra y)=s(\mu(x))+s(\mu(x\ra y))=s(\mu(x))+s(\mu(x)\ra \mu(y))$ \\
$\hspace*{5.2cm}$ $=s(\mu(y))+s(\mu(y)\ra \mu(x))=s(\mu(y))+s(\mu(y\ra x))$ \\
$\hspace*{5.2cm}$ $=s_{\mu}(y)+s_{\mu}(y\ra x)$. \\
Similarly $s_{\mu}(x)+s_{\mu}(x\rs y)=s_{\mu}(y)+s_{\mu}(y\rs x)$, hence $s_{\mu}\in \mathcal{BS}(A)$. 
\end{proof}

\begin{theo} \label{psh-sm-90} Let $A$ be a good pseudo-hoop, $s\in \mathcal{BS}(\Inv(A))$, $\mu\in \mathcal{SM}_1(A)$  
and $\tilde{s}_{\mu}:A\longrightarrow [0, 1]$, defined by $\tilde{s}_{\mu}(x)=s(\mu(x^{-\sim}))$, for all $x\in A$.  
Then $\tilde{s}_{\mu}\in \mathcal{BS}(A)$. 
\end{theo}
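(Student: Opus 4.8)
The plan is to verify the three Bosbach-state axioms $(bs_1)$, $(bs_2)$, $(bs_3)$ for $\tilde{s}_{\mu}$ directly, reducing each of them to the corresponding axiom for $s$ on $\Inv(A)$. First I would check that $\tilde{s}_{\mu}$ is well defined: since $A$ is good, Proposition \ref{psh-20-10}$(3)$ gives $(x^{-\sim})^{-\sim}=((x^{-\sim})^{-})^{\sim}=(x^{-})^{\sim}=x^{-\sim}$, and together with $x^{-\sim}=x^{\sim-}$ this yields $x^{-\sim}\in\Inv(A)$ for every $x\in A$; as $\mu\in\mathcal{SM}_1(A)$, Lemma \ref{psh-sm-50-40}$(2)$ then gives $\mu(x^{-\sim})\in\Inv(A)$, so $\tilde{s}_{\mu}(x)=s(\mu(x^{-\sim}))$ is a legitimate expression with value in $[0,1]$.

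The second step records two facts. On one hand, applying Lemma \ref{psh-sm-50-40}$(1)$ twice gives the identity $\mu(x^{-\sim})=\mu(x^{-})^{\sim}=\mu(x)^{-\sim}$ for all $x\in A$. On the other hand, the Glivenko property of a good pseudo-hoop (Proposition \ref{psh-30}$(2)$) gives $(x\ra y)^{-\sim}=x^{-\sim}\ra y^{-\sim}$ and $(x\rs y)^{-\sim}=x^{-\sim}\rs y^{-\sim}$, and in particular, for $a,b\in\Inv(A)$ one gets $a\ra b, a\rs b\in\Inv(A)$, so that $s$ may be applied to these elements. Now fix $x,y\in A$ and put $a=\mu(x^{-\sim})$, $b=\mu(y^{-\sim})$, both in $\Inv(A)$; then $\tilde{s}_{\mu}(x)=s(a)$, $\tilde{s}_{\mu}(y)=s(b)$, and, using that $\mu$ is a pseudo-hoop homomorphism,
\begin{align*}
\tilde{s}_{\mu}(x\ra y)&=s(\mu((x\ra y)^{-\sim}))=s(\mu(x^{-\sim}\ra y^{-\sim}))=s(\mu(x^{-\sim})\ra\mu(y^{-\sim}))=s(a\ra b),\\
\tilde{s}_{\mu}(x\rs y)&=s(\mu((x\rs y)^{-\sim}))=s(\mu(x^{-\sim}\rs y^{-\sim}))=s(\mu(x^{-\sim})\rs\mu(y^{-\sim}))=s(a\rs b),
\end{align*}
and symmetrically $\tilde{s}_{\mu}(y\ra x)=s(b\ra a)$ and $\tilde{s}_{\mu}(y\rs x)=s(b\rs a)$.

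With these identities in hand, axiom $(bs_2)$ for $\tilde{s}_{\mu}$, namely $\tilde{s}_{\mu}(x)+\tilde{s}_{\mu}(x\ra y)=\tilde{s}_{\mu}(y)+\tilde{s}_{\mu}(y\ra x)$, becomes exactly $(bs_2)$ for $s$ evaluated at $a,b\in\Inv(A)$, i.e. $s(a)+s(a\ra b)=s(b)+s(b\ra a)$; similarly $(bs_3)$ for $\tilde{s}_{\mu}$ reduces to $(bs_3)$ for $s$. Finally $(bs_1)$ is immediate: $0^{-\sim}=0$ and $1^{-\sim}=1$, while $\mu(0)=0$ (because $\mu\in\mathcal{SM}_1(A)$) and $\mu(1)=1$ (because $\mu$ is a homomorphism), so $\tilde{s}_{\mu}(0)=s(\mu(0))=s(0)=0$ and $\tilde{s}_{\mu}(1)=s(\mu(1))=s(1)=1$. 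Hence $\tilde{s}_{\mu}\in\mathcal{BS}(A)$.

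There is no genuine obstacle here beyond bookkeeping; the two points that need care are (i) that $a$, $b$ and $a\ra b$, $a\rs b$ really do lie in $\Inv(A)$, so that $s$ may legitimately be applied — this is precisely where the hypothesis that $A$ is good (the Glivenko property) is used — and (ii) that one must work with $\mu\in\mathcal{SM}_1(A)$ rather than merely $\mu\in\mathcal{SM}(A)$, since $\mu(0)=0$ is needed both for $(bs_1)$ and, via Lemma \ref{psh-sm-50-40}, for the identity $\mu(x^{-\sim})=\mu(x)^{-\sim}$. Alternatively, one could package the argument by noting that $x\mapsto x^{-\sim}$ maps $A$ onto $\Inv(A)$ while preserving $\ra$, $\rs$, $0$ and $1$, that $\mu$ restricts to a state-morphism operator of $\Inv(A)$, and then applying Theorem \ref{psh-sm-80} on $\Inv(A)$; but the direct computation above has the advantage of not requiring any discussion of the pseudo-hoop structure carried by $\Inv(A)$.
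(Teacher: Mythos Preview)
Your argument is correct and follows essentially the same route as the paper's proof: both use Proposition~\ref{psh-30}$(2)$ to rewrite $(x\ra y)^{-\sim}$ as $x^{-\sim}\ra y^{-\sim}$, then push $\mu$ through as a homomorphism, and finally invoke the Bosbach axioms for $s$ on $\Inv(A)$. You are more explicit than the paper about why $\tilde{s}_{\mu}$ is well defined (that $\mu(x^{-\sim})$ and $a\ra b$, $a\rs b$ land in $\Inv(A)$), which is a good addition; the recorded identity $\mu(x^{-\sim})=\mu(x)^{-\sim}$ is correct but not actually needed in the computation.
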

\begin{proof}
Obviously $\tilde{s}_{\mu}(0)=0$ and $\tilde{s}_{\mu}(1)=1$. If $x, y\in A$, applying Proposition \ref{psh-30} 
we get: \\
$\hspace*{2cm}$ $\tilde{s}_{\mu}(x)+\tilde{s}_{\mu}(x\ra y)=s(\mu(x^{-\sim}))+s(\mu((x\ra y)^{-\sim}))$ \\
$\hspace*{5.2cm}$ $=s(\mu(x^{-\sim}))+s(\mu(x^{-\sim}\ra y^{-\sim}))$ \\
$\hspace*{5.2cm}$ $=s(\mu(x^{-\sim}))+s(\mu(x^{-\sim})\ra \mu(y^{-\sim}))$ \\
$\hspace*{5.2cm}$ $=s(\mu(y^{-\sim}))+s(\mu(y^{-\sim})\ra \mu(x^{-\sim}))$ \\
$\hspace*{5.2cm}$ $=s(\mu(y^{-\sim}))+s(\mu(y^{-\sim}\ra x^{-\sim}))$ \\
$\hspace*{5.2cm}$ $=s(\mu(y^{-\sim}))+s(\mu((y\ra x)^{-\sim}))$ \\
$\hspace*{5.2cm}$ $=\tilde{s}_{\mu}(y)+\tilde{s}_{\mu}(y\ra x)$. \\
Similarly $\tilde{s}_{\mu}(x)+\tilde{s}_{\mu}(x\rs y)=\tilde{s}_{\mu}(y)+\tilde{s}_{\mu}(y\rs x)$, 
hence $\tilde{s}_{\mu}\in \mathcal{BS}(A)$. 
\end{proof}

\begin{prop} \label{psh-sm-100} Let $A$ be a pseudo-hoop and $\mu\in \mathcal{SM}(A)$. Then the following hold: \\
$(1)$ $\mu$ is injective iff $\Ker(\mu)=\{1\};$ \\
$(2)$ if $F\in \mathcal{F}(A)$, then $\mu^{-1}(F)\in \mathcal{F}(A);$ \\
$(3)$ if $F\in \mathcal{F}_n(A)$, then $\mu^{-1}(F)\in \mathcal{F}_n(A);$ \\
$(4)$ if $A$ is bounded, $\mu\in \mathcal{SM}_1(A)$ and $F\in \mathcal{F}_i(A)$, 
then $\mu^{-1}(F)\in \mathcal{F}_i(A)$. 
\end{prop}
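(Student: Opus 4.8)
The plan is to treat the four assertions in order, each reducing to the observation that a state-morphism operator $\mu$ is a pseudo-hoop endomorphism satisfying $\mu^2=\mu$ (and, in part $(4)$, a \emph{bounded} endomorphism since $\mu(0)=0$), so the standard homomorphism machinery applies. I would first record, once and for all, that $\mu(1)=1$ and that $x\le y$ implies $\mu(x)\le\mu(y)$.

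For $(1)$, if $\mu$ is injective and $x\in\Ker(\mu)$, then $\mu(x)=1=\mu(1)$ forces $x=1$. Conversely, assume $\Ker(\mu)=\{1\}$ and $\mu(x)=\mu(y)$. Applying the homomorphism axioms $(ii),(iii)$ gives $\mu(x\ra y)=\mu(x)\ra\mu(y)=1$ and $\mu(y\ra x)=\mu(y)\ra\mu(x)=1$, hence $x\ra y,\ y\ra x\in\Ker(\mu)=\{1\}$, i.e. $x\le y$ and $y\le x$, so $x=y$ by antisymmetry of $\le$.

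For $(2)$ I would verify the filter axioms directly: $1=\mu(1)\in F$ gives $1\in\mu^{-1}(F)$ (in particular $\mu^{-1}(F)\ne\emptyset$); if $x,y\in\mu^{-1}(F)$ then $\mu(x\odot y)=\mu(x)\odot\mu(y)\in F$ by $(F_1)$, so $x\odot y\in\mu^{-1}(F)$; and if $x\in\mu^{-1}(F)$ with $x\le y$ then $\mu(x)\le\mu(y)$, whence $\mu(y)\in F$ by $(F_2)$ and $y\in\mu^{-1}(F)$. For $(3)$, since $\mu$ preserves both $\ra$ and $\rs$, the chain $x\ra y\in\mu^{-1}(F)\iff\mu(x)\ra\mu(y)\in F\iff\mu(x)\rs\mu(y)\in F\iff x\rs y\in\mu^{-1}(F)$ — the middle equivalence being normality of $F$ — shows $\mu^{-1}(F)$ is normal.

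For $(4)$, the quickest route is to note that when $A$ is bounded and $\mu\in\mathcal{SM}_1(A)$, $\mu$ is a bounded pseudo-hoop homomorphism from $A$ to itself, so Proposition \ref{psh-if-100} with $f=\mu$ and $B=A$ yields $\mu^{-1}(F)\in\mathcal{F}_i(A)$; alternatively one reproves it inline, using $\mu(x^{-})=\mu(x)^{-}$ and $\mu(x^{\sim})=\mu(x)^{\sim}$ from Lemma \ref{psh-sm-50-40}, to carry $x^{-\sim}\ra x$ and $x^{\sim-}\rs x$ into $\mu^{-1}(F)$. None of the steps poses a genuine obstacle; the only point needing a little care is the converse in $(1)$, where one must pass from the hypothesis on $\Ker(\mu)$ through the residua to the partial order rather than reasoning about $\mu$ directly.
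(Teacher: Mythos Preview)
Your proof is correct. The paper's own proof is terse: it defers $(1)$--$(3)$ to an external reference (\cite[Th.~6.12]{Ciu21}) and for $(4)$ invokes exactly Proposition~\ref{psh-if-100}, as you do; your direct arguments for $(1)$--$(3)$ simply spell out the standard homomorphism verifications that the cited reference would contain, so the approaches are essentially the same.
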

\begin{proof}
$(1)-(3)$ See \cite[Th. 6.12]{Ciu21}. \\
$(4)$ It follows by Proposition \ref{psh-if-100}. 
\end{proof}

$\vspace*{5mm}$

\section{Concluding remarks}

Developing probabilistic theories on algebras of fuzzy logics is a central topic for the study of fuzzy systems. 
For this purpose, different probabilistic models have been constructed on algebras of multiple-valued logics: 
states, generalized states, internal states, state-morphism operators, measures. 
Probabilistic models on hoops and pseudo-hoops were topics of many works (\cite{Bor2}, \cite{Ciu10}, \cite{Ciu20}, \cite{Ciu22}, \cite{Ciu23}). 
In this paper we show that the particular case of Wajsberg pseudo-hoops and the involutive filters play an important role in probabilities theory on pseudo-hoops. We unified different concepts of internal states on pseudo-hoops and 
proved that in the case of Wajsberg pseudo-hoops the three types of internal states coincide. 
Important results on probabilistic models on algebras of non-classical logic have been proved based on involutive filters. \\
We suggest further directions of research, as the above topics are of current interest. \\
For the case of state R$\ell$-monoids $(M,\mu)$ the notion of $\mu$-filters was introduced in \cite{DvRa3}. 
One can define the notion of $\mu$-filters on pseudo-hoops and investigate the correspondence between the existence 
of state operators and the maximal and normal $\mu$-filters on state pseudo-hoops. 
The notion of involutive $\mu$-filter on pseudo-hoops could be an interesting topic of research. \\
Subdirectly irreducible state R$\ell$-monoids have been introduced and studied in \cite{DvRa3} and \cite{DvRa4}. 
As a further research topic one could define and investigate the irreducible state pseudo-hoops.


$\vspace*{5mm}$

\setlength{\parindent}{0pt}

\vspace*{3mm}
\begin{flushright}
\begin{minipage}{148mm}\sc\footnotesize
Lavinia Corina Ciungu\\
Department of Mathematics \\
University of Iowa \\
14 MacLean Hall, Iowa City, Iowa 52242-1419, USA \\
{\it E--mail address}: {\tt lavinia-ciungu@uiowa.edu}

\vspace*{3mm}

\end{minipage}
\end{flushright}

\end{document}